\def\beq{\begin{equation}}
\def\eeq{\end{equation}}
\theoremstyle{definition}
\newtheorem{definition}{Definition}
\newtheorem{example}{Example}
\theoremstyle{plain}
\newtheorem{theorem}{Theorem}
\newtheorem{claim}{Claim}
\newtheorem{lemma}{Lemma}
\newtheorem{conjecture}{Conjecture}
\newtheorem{corollary}{Corollary}
\newtheorem{observation}{Observation}
\newtheorem{proposition}{Proposition}
\numberwithin{equation}{section}
\numberwithin{proposition}{section}
\numberwithin{definition}{section}
\numberwithin{theorem}{section}
\numberwithin{problem}{section}
\numberwithin{example}{section}
\numberwithin{remark}{section}
\numberwithin{claim}{section}
\numberwithin{fact}{section}
\numberwithin{lemma}{section}
\numberwithin{conjecture}{section}
\numberwithin{corollary}{section}
\numberwithin{observation}{section}
\begin{document}

\title{Old and new applications of Katona's circle}

\author{by Peter Frankl\\
R\'enyi Institute, Budapest, Hungary}

\date{}
\maketitle

\begin{abstract}
The present paper is to honour Gyula Katona, my teacher on the occasion of his 80\textsuperscript{th} birthday.
Its main content is threefold, new proofs of old results (e.g.\ the De Bonis, Katona, Swanepoel Theorem on butterfly-free families), new results obtained via the Katona Circle (e.g. for the sum of the sizes of non-empty cross-intersecting families), the solution for the Katona Circle of some notoriously difficult problems (e.g. Erd\H{o}s Matching Conjecture).
\end{abstract}

\section{Introduction}
\label{sec:1}

It is hard to believe but Gyula Katona is 80 years old.
Let me explain how I joined his seminar back in the fall of 1973.
Those days seminars at E\"otv\"os University were advertised by mostly handwritten pieces of paper on a clipboard.
I had a general interest in mathematics and was trying to find some exciting topics.
Katona's note read as follows.

Suppose that $A_1, A_2, \dots, A_m$ are distinct subsets of a set of $n$ elements.
If no two of them are disjoint then $m \leq 2^{n - 1}$.
If you can prove this then come to my seminar!

With a little thought I could find the proof.
As a naive young student I felt obliged to go to Katona's seminar, ``Extremal families of sets''.

As a matter of fact I went to many other seminars as well.
The reason that I stuck with Katona's seminar is twofold.
The first is that he was a wonderful person, treating us students as friends.
The second is that many of the proofs were very elegant.
A very good example to explain this point is the \emph{Circle Method} which Katona has just invented and which has been
subsequently turned into an extremely useful tool of the topic.
Let me call it \emph{Katona's Circle} throughout this paper.

Before explaining Katona's Circle let us fix some notation and introduce a more general averaging argument.

Let $[n] = \{1,2,\ldots, n\}$ be the standard $n$-element set and $2^{[n]}$ its power set.
For $0 \leq k \leq n$ let ${[n]\choose k}$ be the collection of all $k$-subsets of~$[n]$.
Subsets of $2^{[n]}$ are called families.
If $\mathcal F \subset {[n]\choose k}$, it is said to be $k$-uniform.

Let $S_n$ be the symmetric group acting on $[n]$.
For a permutation $\pi \in S_n$ and a family $\mathcal G \subset {[n]\choose k}$ define $\pi(\mathcal G) = \bigl\{\pi(G) : G \in \mathcal G\bigr\}$ where $\pi(G)$ is the image of $G$ under $\pi$.
That is, for $G = \{a_1, \dots, a_k\}$, $\pi(G) = \bigl\{\pi(a_1), \ldots, \pi(a_k)\bigr\}$.

\begin{lemma}[Katona's Averaging Argument \cite{K4}]
\label{lem:1.1}
Suppose that $\mathcal F, \mathcal G \subset {[n]\choose k}$, then
\beq
\label{eq:1.1}
|\mathcal F| \Bigm/ {n\choose k} \leq \max_{\pi \in S_n}\left\{ \frac{|\pi(\mathcal G) \cap \mathcal F|}{|\mathcal G|}\right\}.
\eeq
\end{lemma}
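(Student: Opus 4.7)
The plan is to prove the inequality by a standard double-counting / averaging argument over the symmetric group $S_n$. The quantity on the right is a maximum, and the maximum of anything is at least its average, so it suffices to compute the average of $|\pi(\mathcal G)\cap\mathcal F|$ as $\pi$ ranges uniformly over $S_n$ and show it equals $|\mathcal G|\cdot|\mathcal F|/\binom{n}{k}$.

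First I would consider the sum
\[
\Sigma \;=\; \sum_{\pi\in S_n} |\pi(\mathcal G)\cap \mathcal F|,
\]
and rewrite it by switching the order of summation:
\[
\Sigma \;=\; \sum_{G\in\mathcal G}\sum_{F\in\mathcal F}\bigl|\{\pi\in S_n : \pi(G)=F\}\bigr|.
\]
The key combinatorial step is the count of the innermost term. A permutation $\pi$ with $\pi(G)=F$ is determined by choosing a bijection from $G$ to $F$ (there are $k!$ such choices) together with a bijection from $[n]\setminus G$ to $[n]\setminus F$ (there are $(n-k)!$ such choices). Hence $|\{\pi : \pi(G)=F\}| = k!(n-k)!$, independent of $G$ and $F$.

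Substituting back yields
\[
\Sigma \;=\; |\mathcal G|\cdot|\mathcal F|\cdot k!(n-k)!,
\]
so the average value of $|\pi(\mathcal G)\cap\mathcal F|$ over $\pi\in S_n$ equals
\[
\frac{\Sigma}{n!} \;=\; \frac{|\mathcal G|\cdot|\mathcal F|}{\binom{n}{k}}.
\]
Since the maximum over $S_n$ is at least the average, there exists $\pi\in S_n$ with $|\pi(\mathcal G)\cap\mathcal F|\geq |\mathcal G|\cdot|\mathcal F|/\binom{n}{k}$. Dividing both sides by $|\mathcal G|$ (which is positive, else the statement is trivial) gives \eqref{eq:1.1}.

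There is no real obstacle here; the only thing to be careful about is the transitivity count $k!(n-k)!$, which uses that $S_n$ acts transitively on ordered pairs of $k$-subsets with the same stabiliser size. The argument is a clean instance of the probabilistic/averaging principle, and Katona's Circle will later appear as the special case in which $\mathcal G$ is the family of $k$-element arcs on an $n$-cycle and the averaging is restricted to the cyclic subgroup rather than the full $S_n$.
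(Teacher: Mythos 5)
Your argument is correct and essentially the paper's: both establish that the average over $\pi\in S_n$ of $|\pi(\mathcal G)\cap\mathcal F|/|\mathcal G|$ equals $|\mathcal F|/\binom{n}{k}$ and then invoke max $\geq$ mean. The paper phrases it probabilistically (for each fixed $G$, $\pi(G)$ is uniform over $\binom{[n]}{k}$, then use linearity of expectation), whereas you unpack that observation into an explicit double count of $k!(n-k)!$; these are the same computation written in two dialects.
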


\begin{proof}
Let us choose a $k$-element set $H$ uniformly at random.
The LHS of \eqref{eq:1.1} is the probability $p$ of $H \in \mathcal F$.
On the other hand for any $G \in \mathcal G$ the probability of $\pi(G) \in \mathcal F$ is again $p = |\mathcal F| \bigm/{n \choose k}$.
Consequently, the expectation of $\bigl|\pi(\mathcal G) \cap \mathcal F\bigr|$ is $p|\mathcal G|$.
Thus $|\mathcal F| \bigm/{n\choose k}$ is the expectation of $\bigl|\pi(\mathcal G) \cap \mathcal F\bigr| \bigm/ |\mathcal G|$.
Since the expectation never exceeds the maximum, \eqref{eq:1.1} follows.
\end{proof}

To make \eqref{eq:1.1} useful one needs to choose $\mathcal G$ relatively large.
Indeed, the upper bound will be never less than $1/|\mathcal G|$.
On the other hand in order to find good bounds for $\bigl|\pi(\mathcal G) \cap \mathcal F\bigr|$, it is better to have $|\mathcal G|$ relatively small.
Katona's Circle might be the perfect answer to this dilemma.

\setcounter{definition}{1}
\begin{definition}
\label{def:1.2}
Let $\pi = (x_1, \ldots, x_n)$ be a permutation of $(1,2,\ldots, n)$.
Think of $x_1, \ldots, x_n$ in a cyclic way, that is, the element after $x_n$ is $x_1$.
For $1 \leq k < n$ and for all $1 \leq r \leq n$ define the \emph{arc} $A_k(r)$ as the $k$-set $\bigl(x_r, x_{r + 1}, \ldots, x_{r + k - 1}\bigr)$
where computation is modulo~$n$.
E.g., $A_3(4) = (x_4, x_5, x_6)$, $A_4(n - 1) = (x_{n - 1}, x_n, x_1, x_2)$.
The elements $x_r$ $(x_{k - 1})$ are called the head (tail) of the arc, respectively.

Note that $\emptyset$ and $[n]$ are not included among the arcs.
\end{definition}

In Section \ref{sec:3} we are going to present Katona's short proof of the Erd\H{o}s--Ko--Rado Theorem \cite{EKR} using
$\mathcal G = \mathcal A(n, k) := \bigl\{A_r(k) : 1 \leq r \leq n\bigr\}$.

Let us use here $\mathcal A(n) = \mathcal A(n, 1) \cup \ldots \cup \mathcal A(n, n - 1)$ to give a short proof of Sperner's Theorem \cite{S} (see Theorem \ref{th:1.6} below).

\begin{definition}
\label{def:1.3}
The family $\mathcal R \subset 2^{[n]}$ is called an \emph{antichain} if $R \subset R'$ never holds for distinct $R, R' \in \mathcal R$.
\end{definition}
\setcounter{theorem}{3}
\begin{theorem}[LYM inequality, \cite{Y}, \cite{B}, \cite{Lu}, \cite{Me}]
\label{th:1.4}
Suppose that $\mathcal R \subset 2^{[n]}$ is an antichain.
Then
\beq
\label{eq:1.2}
\sum_{R \in \mathcal R} 1\Bigm/{n\choose |R|} \leq 1.
\eeq
Moreover, in case of equality $\mathcal R = {[n]\choose \ell}$ for some $0 \leq \ell \leq n$.
\end{theorem}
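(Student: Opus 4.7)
The plan is to apply Katona's averaging argument (Lemma~\ref{lem:1.1}) with $\mathcal{G}=\mathcal{A}(n)=\mathcal{A}(n,1)\cup\cdots\cup\mathcal{A}(n,n-1)$, or equivalently—and more transparently—to carry out the equivalent direct double counting of the pairs $(\pi,R)\in S_n\times\mathcal R$ for which $R$ appears as an arc of the cyclic arrangement of $\pi$. The trivial cases $\emptyset\in\mathcal R$ or $[n]\in\mathcal R$ force $\mathcal R=\{\emptyset\}$ or $\{[n]\}$ by the antichain property, and \eqref{eq:1.2} holds with equality; so I may assume $\emptyset,[n]\notin\mathcal R$ and every $R\in\mathcal R$ genuinely is an arc for some $\pi$.

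The backbone is the following arc-counting lemma: for any $\pi=(x_1,\dots,x_n)$, at most $n$ of the arcs $A_k(r)$ (with $1\le k\le n-1$, $1\le r\le n$) lie in $\mathcal R$. I would prove this by grouping arcs by their head: for fixed $r$, the arcs $A_1(r)\subsetneq A_2(r)\subsetneq\cdots\subsetneq A_{n-1}(r)$ form a chain in $2^{[n]}$, so the antichain $\mathcal R$ contains at most one of them; summing over the $n$ possible heads gives the bound.

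Next I would double count. The arc lemma yields at most $n\cdot n!$ pairs. On the other hand, for fixed $R$ with $|R|=k$, the number of $\pi\in S_n$ in which $R$ appears as an arc is exactly $n\cdot k!\,(n-k)!$ (choose the head position, order the elements of $R$, order the rest). Equating gives
\[
\sum_{R\in\mathcal R} n\cdot |R|!\,(n-|R|)!\;\le\; n\cdot n!,
\]
which, dividing by $n\cdot n!$, is precisely \eqref{eq:1.2}.

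The main obstacle is the equality case. Equality in \eqref{eq:1.2} forces the arc-counting lemma to be tight for every $\pi$, so each head $x_r$ is the head of exactly one arc of $\mathcal R$. From this rigidity I need to deduce $\mathcal R=\binom{[n]}{\ell}$. The route I would try: fix $R\in\mathcal R$ of size $k$ and embed it as $A_k(1)$ in some $\pi$; then for the head $x_2$, the arcs of size $<k$ are strict subsets of $R$ and hence excluded, so the unique arc of $\mathcal R$ with head $x_2$ has size $\ge k$. Playing the same game with the head $x_{k+1}$ (whose short arcs are disjoint from $R$) and with the reverse arrangement, together with an averaging of arc sizes $\sum_r|\,\text{arc of $\mathcal R$ at $r$}\,|$ over $\pi$, should force all arcs to have exactly size $k$. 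Once every head in every $\pi$ contributes an arc of size $k$, a simple connectivity/shifting argument shows $\mathcal R\supseteq\binom{[n]}{k}$, and the antichain property upgrades this to equality. I expect this rigidity step to be the delicate part; the LYM inequality itself falls out effortlessly from the arc lemma.
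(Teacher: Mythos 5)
Your inequality argument is the paper's in different clothing: grouping arcs by their head is exactly how the paper proves its ``Circular Sperner'' Lemma~\ref{lem:1.5}, and your double count of pairs $(\pi,R)$ is just the expectation computation of Katona's averaging argument carried out explicitly, so that part is correct and matches the paper.

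The equality case, however, you leave genuinely open (``I expect this rigidity step to be the delicate part''), and the detours you propose — the head $x_{k+1}$, the reverse arrangement, ``averaging of arc sizes,'' a shifting argument to show $\mathcal R\supseteq\binom{[n]}{k}$ — are not needed and are not obviously going to close. The clean way to finish is to \emph{iterate your own $x_2$ observation around the circle}, which is precisely what the paper's Lemma~\ref{lem:1.5} does. Tightness in the arc lemma for a fixed $\pi$ means each head $x_r$ is the head of exactly one $B_r\in\mathcal R$; your observation shows $|B_2|\ge|B_1|$, and the same comparison of consecutive heads gives $|B_1|\le|B_2|\le\cdots\le|B_n|\le|B_1|$ (the last step wrapping around), so all $|B_r|$ are equal. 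Equivalently, as the paper puts it, any descent $|B_i|>|B_{i+1}|$ would force $B_i\supset B_{i+1}$. Then to pass from ``constant arc size for each $\pi$'' to $\mathcal R=\binom{[n]}{\ell}$ you do not need shifting: any two sets $R,R'$ can simultaneously be realized as arcs of a common cyclic arrangement (arrange $R\setminus R'$, then $R\cap R'$, then $R'\setminus R$, then the rest), so the tightness above forces $|R|=|R'|$, i.e.\ $\mathcal R\subseteq\binom{[n]}{\ell}$ for a single $\ell$; and then $\sum_{R\in\mathcal R}1/\binom{n}{|R|}=|\mathcal R|/\binom{n}{\ell}=1$ forces $|\mathcal R|=\binom{n}{\ell}$ outright. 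With that replacement, your proof becomes the paper's.
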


Note that if $\emptyset \in \mathcal R$ then $\mathcal R$ has no other members.
The same is true in the case $[n] \in \mathcal R$.
Therefore from now on we suppose $0 < |R| < n$ for all $R \in \mathcal R$.

Let us prove a lemma which might be called \emph{Circular Sperner Theorem}.

\setcounter{lemma}{4}
\begin{lemma}
\label{lem:1.5}
\beq
\label{eq:1.3}
|\mathcal R \cap \mathcal A(n)| \leq n
\eeq
with equality if and only if $\mathcal R \cap \mathcal A(n) = \mathcal A(n, \ell)$ for some $1 \leq \ell < n$.
\end{lemma}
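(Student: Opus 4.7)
The plan is to partition the arc family $\mathcal{A}(n)$ into $n$ chains in the Boolean lattice $2^{[n]}$, each of length $n-1$, and then invoke the elementary fact that an antichain meets any chain in at most one element.

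For the partition, I would group arcs by their head: for each $r \in [n]$ let $C_r = \{A_k(r) : 1 \le k \le n-1\}$. Since fixing the head and incrementing $k$ just appends one more cyclic successor, $A_k(r) \subsetneq A_{k+1}(r)$, so $C_r$ is a chain. Each arc has a unique head, so $C_1, \ldots, C_n$ partition $\mathcal{A}(n)$ into $n$ chains, and the antichain property of $\mathcal{R}$ gives $|\mathcal{R}\cap C_r|\le 1$ for every $r$. Summing yields \eqref{eq:1.3}.

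For equality, suppose each $C_r$ contributes exactly one arc $A_{k_r}(r) \in \mathcal{R}$, with $1\le k_r\le n-1$. I would then show the antichain condition forces all the $k_r$ to be equal by comparing consecutive arcs $A_{k_r}(r)$ and $A_{k_{r+1}}(r+1)$ (indices mod $n$). The key point is that $x_r \notin A_{k_{r+1}}(r+1)$ because the latter has at most $n-1$ elements and starts at $x_{r+1}$; hence $A_{k_r}(r)\not\subseteq A_{k_{r+1}}(r+1)$. A direct inspection of the cyclic indices shows $A_{k_{r+1}}(r+1)\subseteq A_{k_r}(r)$ iff $k_{r+1}\le k_r-1$, so the antichain hypothesis demands $k_{r+1}\ge k_r$ for every $r$. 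Running this cyclically gives $k_1\le k_2\le\cdots\le k_n\le k_1$, forcing a common value $\ell$, and therefore $\mathcal{R}\cap \mathcal{A}(n)=\mathcal{A}(n,\ell)$. The converse direction is immediate since $\mathcal{A}(n,\ell)$ consists of $n$ distinct $\ell$-sets and so is automatically an antichain.

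There is no real obstacle beyond careful bookkeeping with the cyclic indexing; the only place where the hypothesis $0<|R|<n$ enters (through $k_r\le n-1$) is in ensuring that $x_r$ lies outside $A_{k_{r+1}}(r+1)$, which rules out the trivial wrap-around containment. Everything else is a direct Dilworth-style counting argument together with the cyclic monotonicity of the $k_r$.
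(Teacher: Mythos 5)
Your proof is correct and is essentially the paper's argument: the chain decomposition by head is exactly the paper's observation that distinct members of an antichain must have distinct heads, and your cyclic monotonicity claim ($k_{r+1}\ge k_r$ for all $r$, hence constant) is the contrapositive of the paper's step that a non-constant size sequence must have a descent $|B_i|>|B_{i+1}|$, yielding $B_{i+1}\subsetneq B_i$.
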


\begin{proof}[Proof of the Lemma]
Let $\mathcal R \cap \mathcal A(n) = \{B_1, \ldots, B_q\}$ and let $h(B_i)$ denote the head of the arc $B_i$.
Since $\mathcal R \cap \mathcal A(n)$ is still an antichain, $h(B_i) \neq h(B_j)$ for $1 \leq i < j \leq q$.
This proves $q \leq n$.

Assume next $q = n$.
By symmetry suppose $h(B_i) = x_i$, $1 \leq i \leq n$.
Unless $|B_i|$ is constant, we can find $1 \leq i \leq n$ so that $|B_i| > |B_{i + 1}|$.
However, this implies $B_i \supset B_{i + 1}$, a contradiction.
\end{proof}

To prove Theorem \ref{th:1.4} first note that the probability of $R \in \mathcal A(n)$ is $\frac{|\mathcal A(n, |R|)|}{{n\choose |R|}} = \frac{n}{{n\choose |R|}}$.
Thus the expectation of $|\mathcal R \cap \mathcal A(n)|$ is $n \sum\limits_{R \in \mathcal R} 1 \bigm/{n\choose |R|}$.

Since the expectation never exceeds the maximum, \eqref{eq:1.2} follows.
Suppose that equality holds in \eqref{eq:1.2}.
Then equality must hold in \eqref{eq:1.3} for every (cyclic) permutation $(x_1, \dots, x_n)$.

However, if $R, R' \in \mathcal R$ satisfy $|R| \neq |R'|$ then for an appropriate permutation both $R$ and $R'$ are in $\mathcal A(n)$ whence
$|\mathcal R \cap \mathcal A(n)| \leq n - 1$.
This proves that equality in \eqref{eq:1.2} implies $\mathcal R = {[n]\choose \ell}$ for some $\ell$.\hfill $\square$

\smallskip
Since among the binomial coefficients ${n\choose k}$, $0 \leq k \leq n$ the largest are ${n\choose \lfloor n/2\rfloor}$ and ${n\choose \lceil n /2\rceil}$ (for $n$ even they are the same), Theorem \ref{th:1.4} implies

\setcounter{theorem}{5}
\begin{theorem}[Sperner Theorem \cite{S}]
\label{th:1.6}
Suppose that $\mathcal F \subset  2^{[n]}$ is an antichain.
Then
\beq
\label{eq:1.4}
|\mathcal F| \leq {n\choose \lfloor n/2\rfloor}
\eeq
with equality holding if and only if
$$
\mathcal F = {[n]\choose \lfloor n/2\rfloor} \ \ \text{ or } \ \ \mathcal F = {[n]\choose \lceil n/2\rceil} \
\text{(which coincide for $n$ even)}.
$$
\end{theorem}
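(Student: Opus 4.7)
The plan is to derive the theorem as an immediate consequence of the LYM inequality (Theorem \ref{th:1.4}) together with the elementary fact that among the binomial coefficients ${n\choose k}$, $0 \leq k \leq n$, the maximum is attained at $k = \lfloor n/2\rfloor$ (and equivalently at $k = \lceil n/2\rceil$).

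First I would start from the LYM bound $\sum_{F \in \mathcal F} 1\bigm/{n\choose |F|} \leq 1$ and replace each denominator by the (weakly) larger quantity ${n\choose \lfloor n/2\rfloor}$; this only decreases each summand, yielding
\[
\frac{|\mathcal F|}{{n\choose \lfloor n/2\rfloor}} = \sum_{F \in \mathcal F} \frac{1}{{n\choose \lfloor n/2\rfloor}} \leq \sum_{F \in \mathcal F} \frac{1}{{n\choose |F|}} \leq 1,
\]
which after rearrangement gives exactly \eqref{eq:1.4}.

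For the equality case, I would analyse when both of the inequalities above are tight. The first is tight exactly when ${n\choose |F|} = {n\choose \lfloor n/2\rfloor}$ for every $F \in \mathcal F$, i.e.\ when $|F| \in \{\lfloor n/2\rfloor, \lceil n/2\rceil\}$ for all such $F$. The second is tight exactly when equality holds in LYM, and the equality clause of Theorem \ref{th:1.4} then forces $\mathcal F = {[n]\choose \ell}$ for some single $\ell$. Intersecting these two conclusions, $\ell$ must lie in $\{\lfloor n/2\rfloor, \lceil n/2\rceil\}$, which coincide when $n$ is even; this yields precisely the two extremal configurations listed in the statement.

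There is essentially no obstacle: once the LYM inequality and its equality case are in hand, Sperner's Theorem drops out mechanically. The only minor point worth flagging is that for odd $n$ the two middle binomial coefficients have the same value, so both ${[n]\choose \lfloor n/2\rfloor}$ and ${[n]\choose \lceil n/2\rceil}$ are legitimate extremal families, which matches the statement of Theorem \ref{th:1.6}.
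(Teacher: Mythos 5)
Your proposal follows exactly the route the paper takes: derive \eqref{eq:1.4} from the LYM inequality \eqref{eq:1.2} by using the maximality of the middle binomial coefficient, and pin down the equality case from the equality clause of Theorem \ref{th:1.4}. The paper leaves this deduction as a one-line remark before stating Theorem \ref{th:1.6}; you have merely spelled it out, correctly.
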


\section{The circular Kruskal--Katona Theorem}
\label{sec:2}

One of the cornerstones of extremal set theory is the Kruskal--Katona Theorem.
To state it let us define the \emph{shadow} of a family.

\begin{definition}
\label{def:2.1}
Let $\mathcal F \subset {[n]\choose k}$, $0 < \ell < k$.
The $\ell$\emph{-shadow} $\sigma^{(\ell)}(\mathcal F)$ is defined as $\sigma^{(\ell)}(\mathcal F) = \left\{G \in {[n]\choose \ell} : \exists F \in \mathcal F, G \subset F \right\}$.
\end{definition}

Given $m, k$ and $\ell$ the Kruskal--Katona Theorem determines the minimum of $|\sigma^{(\ell)}(\mathcal F)|$ over all $k$-uniform families $\mathcal F$ satisfying $|\mathcal F| = m$.
To avoid introducing somewhat complicated notation let us only state the following version due to Lov\'asz \cite{L}.
Cf. \cite{F84} for a simple proof of both versions.

\setcounter{theorem}{1}
\begin{theorem}[\cite{Kr}, \cite{K2}, \cite{L}]
\label{th:2.2}
Suppose that $\mathcal F \subset {[n]\choose k}$, $|\mathcal F| = {x\choose k}$, $x \geq k$ a real number.
Then for every $0 < \ell < k$,
\beq
\label{eq:2.1}
|\sigma^{\ell}(\mathcal F)| \geq {x\choose \ell}.
\eeq
\end{theorem}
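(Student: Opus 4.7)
The plan is to deduce Theorem~\ref{th:2.2} from a ``circular'' shadow estimate combined with Katona's averaging argument (Lemma~\ref{lem:1.1}). As a first step, fix a cyclic ordering of $[n]$ and consider the arcs $\mathcal{A}(n,k)$ and $\mathcal{A}(n,\ell)$. For a subfamily $\mathcal{H}\subseteq\mathcal{A}(n,k)$ of size $m$, let $\sigma_c^{(\ell)}(\mathcal{H})$ denote those arcs in $\mathcal{A}(n,\ell)$ that are contained in some member of $\mathcal{H}$. A short combinatorial argument---treating a maximal run of consecutive arcs in $\mathcal{H}$ as an interval on the cycle---should yield the sharp local bound $|\sigma_c^{(\ell)}(\mathcal{H})|\geq m+(k-\ell)$ for $1\le m\le n-(k-\ell)$, with equality when $\mathcal{H}$ is itself a consecutive run of arcs.

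Next, I would average this inequality over all cyclic permutations. Writing $\mathcal{F}_\pi=\mathcal{F}\cap\pi(\mathcal{A}(n,k))$ and $T_\pi=\sigma^{(\ell)}(\mathcal{F})\cap\pi(\mathcal{A}(n,\ell))$, the same double counting that underlies Lemma~\ref{lem:1.1} gives $\mathbb{E}[|\mathcal{F}_\pi|]=n|\mathcal{F}|/\binom{n}{k}$ and $\mathbb{E}[|T_\pi|]=n|\sigma^{(\ell)}(\mathcal{F})|/\binom{n}{\ell}$. Since $\sigma_c^{(\ell)}(\mathcal{F}_\pi)\subseteq T_\pi$, the per-cycle lower bound feeds into a first global inequality linking $|\mathcal{F}|$ and $|\sigma^{(\ell)}(\mathcal{F})|$.

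The main obstacle is that a single application of the circular bound is only \emph{linear} in $|\mathcal{F}_\pi|$, whereas the desired $|\sigma^{(\ell)}(\mathcal{F})|\geq\binom{x}{\ell}$ is genuinely nonlinear in $|\mathcal{F}|=\binom{x}{k}$. To close the gap I would iterate: peel off an element $i\in[n]$ using the link decomposition $\mathcal{F}(i)=\{F\setminus\{i\}:i\in F\}$ and $\mathcal{F}(\bar i)=\{F:i\notin F\}$, apply the circular inequality separately on each part, and combine by induction on $n+k$. Alternatively, one may first reduce to shifted families by the standard compression argument, verify the bound on the initial segment in the colex order, and extend to real $x$ by continuity; this is the route taken in \cite{F84}, which the excerpt cites for exactly this purpose. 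The circular reformulation should streamline the shifted case, since on the cycle the extremal family becomes a single arc-interval and the Kruskal--Katona bound becomes an interval-counting identity that can be checked by inspection.
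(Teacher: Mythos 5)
The paper never proves Theorem~\ref{th:2.2}: it is quoted as a known result with a pointer to \cite{F84} for a simple proof, and a few lines after the proof of Theorem~\ref{th:2.3} the author states explicitly that ``Theorem~\ref{th:2.3} or the Katona Circle Method do not seem suitable to provide a new proof of the Kruskal--Katona Theorem.'' Your first and main strategy --- averaging the circular shadow bound $|\sigma_c^{(\ell)}(\mathcal H)|\ge|\mathcal H|+(k-\ell)$ over cyclic permutations --- is precisely the approach the paper rules out, and it does fail. The reason is quantitative: the circular bound only applies on permutations for which $\mathcal F_\pi\ne\emptyset$, so the averaged inequality is at best
$$
\frac{n\,|\sigma^{(\ell)}(\mathcal F)|}{\binom{n}{\ell}} \;\ge\; \frac{n\,|\mathcal F|}{\binom{n}{k}} \;+\; (k-\ell)\Pr[\mathcal F_\pi\ne\emptyset].
$$
Test it on $\mathcal F$ a single $k$-set, so $x=k$, $|\mathcal F|=1$, $|\sigma^{(\ell)}(\mathcal F)|=\binom{k}{\ell}$. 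Then $\Pr[\mathcal F_\pi\ne\emptyset]=n/\binom{n}{k}$, and the right-hand side gives $|\sigma^{(\ell)}(\mathcal F)|\ge (1+k-\ell)\binom{n}{\ell}\big/\binom{n}{k}$, which tends to $0$ as $n\to\infty$ (since $\ell<k$) and is far below $\binom{k}{\ell}$. The same phenomenon the paper cites for Hilton--Milner --- that the relevant density $|\mathcal F|/\binom{n}{k}$ can be smaller than $1/n$ --- kills the circle-averaging here.

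You do correctly diagnose the obstacle (linear vs.\ nonlinear in $|\mathcal F|$), but neither of your proposed fixes patches the circle argument. The link decomposition idea is left as a sketch, and it is not clear what invariant the induction preserves; more to the point, once you peel off a vertex you are no longer on a single circle, so the ``circular reformulation'' no longer streamlines anything. Your second fallback --- compression to shifted families and verifying the bound on the colex initial segment, then extending to real $x$ --- is correct, but that is exactly the standard proof (it is what \cite{F84} does) and it makes no use of Katona's circle. So the honest conclusion is: the circle route is a dead end for this theorem, as the paper itself states, and a valid proof must go via compression or one of the other classical arguments.
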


Note that if $x$ is an integer then ${[x]\choose k}$ shows that \eqref{eq:2.1} is best possible.
Let us note also that $n$ is unimportant for \eqref{eq:2.1}, i.e., one gets the same bound for all $n \geq x$.

On the other hand if we fix $[n]$ then we can define the \emph{shade} $\sigma^{(\ell)}(\mathcal F)$ for $k < \ell < n$ as well:
$$
\sigma^{(\ell)}(\mathcal F) = \left\{H \in {[n]\choose \ell} : \exists F \in \mathcal F, \, F \subset H\right\}.
$$
Let us define further the \emph{immediate shadow} $\partial^- \mathcal F$ and the \emph{immediate shade} $\partial^+ \mathcal F$ by
$$
\partial^- \mathcal F = \sigma^{(k - 1)}(\mathcal F), \ \ \ \partial^+ \mathcal F = \sigma^{(k + 1)}(\mathcal F).
$$

Over 30 years ago the author (\cite{F90}) proved the following

\begin{theorem}[Circular Kruskal--Katona Theorem]
\label{th:2.3}
Suppose that $\mathcal B \subset \mathcal A(n, k)$.
Then
\beq
\label{eq:2.2}
\bigl|\sigma^{(\ell)}\mathcal B \cap \mathcal A(n, \ell)\bigr| \geq \min \bigl\{n, |\mathcal B| + |\ell - k|\bigr\}; \ \ \ 1 \leq \ell, k < n.
\eeq
\end{theorem}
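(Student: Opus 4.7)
My plan is to strip away the set-theoretic language and reduce \eqref{eq:2.2} to an elementary Minkowski-sum bound in $\mathbb{Z}/n\mathbb{Z}$.

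\textbf{Reduction.} Identify $\mathcal A(n,k)$ with $\mathbb{Z}/n\mathbb{Z}$ by sending $A_k(r)$ to $r$, and set $d=|\ell-k|$. A direct inspection of Definition~\ref{def:1.2} shows that the arcs $A_\ell(r)\in\mathcal A(n,\ell)$ lying in $\sigma^{(\ell)}\{A_k(s)\}$ are precisely those whose index $r$ lies in a block of $d+1$ consecutive residues: namely $\{s,s+1,\ldots,s+d\}$ when $\ell<k$ (the $\ell$-sub\-arcs of $A_k(s)$), and $\{s-d,\ldots,s\}$ when $\ell>k$ (the $\ell$-arcs containing $A_k(s)$). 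Writing $S\subset\mathbb{Z}/n\mathbb{Z}$ for the set of indices of the arcs in $\mathcal B$ (so $|S|=|\mathcal B|$), the indices of the members of $\sigma^{(\ell)}\mathcal B\cap\mathcal A(n,\ell)$ form the cyclic Minkowski sum $T:=S+I$, where $I$ is an arc of $d+1$ consecutive integers. Thus it is enough to prove
\[
|T|\ \geq\ \min\bigl\{n,\,|S|+d\bigr\}.
\]

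\textbf{Cyclic bound.} If $T=\mathbb{Z}/n\mathbb{Z}$ there is nothing to prove, so assume there exists $x\notin T$. Cutting the circle at $x$, view $T$ as a subset of a linear interval and decompose it into its maximal runs of consecutive elements $J_1,\ldots,J_c$. These induce a partition $S=S_1\sqcup\cdots\sqcup S_c$ with $J_j=S_j+I$ for each~$j$. Letting $s_j^-$ and $s_j^+$ denote the smallest and largest element of $S_j$ on the line,
\[
|J_j|\ =\ (s_j^+-s_j^-)+d+1\ \geq\ |S_j|+d,
\]
since $S_j\subset[s_j^-,s_j^+]$. Summing over $j$ yields $|T|\geq|S|+cd\geq|S|+d$, completing the proof.

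\textbf{Expected difficulty.} I do not anticipate any deep obstacle. The only step requiring care is the reduction: one must verify, separately for $\ell<k$ and $\ell>k$, that the indices of the $\ell$-arcs produced by a single $k$-arc really form a block of $d+1$ consecutive residues, since this is what makes the Minkowski-sum reformulation work. Once this bookkeeping is done, the componentwise linear argument is immediate, and the edge case $\ell=k$ (where $d=0$ and $\sigma^{(\ell)}\mathcal B\cap\mathcal A(n,\ell)=\mathcal B$) is trivially absorbed, since then $|\mathcal B|\leq n=|\mathcal A(n,\ell)|$ gives the bound directly.
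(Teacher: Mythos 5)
Your proof is correct and takes a genuinely different route from the paper's. The paper proves the one-step inequality $|\partial^+\mathcal B|\geq|\mathcal B|+1$ (equivalently $|\partial^-\mathcal B|\geq|\mathcal B|+1$) directly: writing $\mathcal B=\{A_k(y_1),\ldots,A_k(y_q)\}$ with $2\leq y_1<\cdots<y_q\leq n$, the arcs $A_{k+1}(y_1),\ldots,A_{k+1}(y_q)$ together with $A_{k+1}(y_1-1)$ give $q+1$ distinct members of $\partial^+\mathcal B$; the theorem then follows by iterating this $|k-\ell|$ times. You instead recast the whole statement as a lower bound on the size of a cyclic Minkowski sum $S+I$ with $I$ an interval of length $d+1$, and settle it in one stroke via the decomposition into maximal runs. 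Both arguments are elementary, but yours dispenses with the induction on $|k-\ell|$ and makes the underlying combinatorics (essentially a Cauchy--Davenport-type bound for $\mathbb{Z}/n\mathbb{Z}$ with one summand an interval) explicit. One thing the paper's incremental formulation buys and yours does not immediately deliver is the exact increment $|\partial^+\mathcal B|=|\mathcal B|+\lambda(\mathcal B)$ of Proposition~\ref{prop:2.4}, which is reused later (Theorems~\ref{th:3.4} and~\ref{th:5.4}); your run decomposition counts runs of $T$ rather than of $S$, which is a weaker invariant. Finally note that, exactly as in the paper's proof, your argument tacitly requires $\mathcal B\neq\emptyset$ so that $c\geq1$; the inequality \eqref{eq:2.2} is false for $\mathcal B=\emptyset$ and $\ell\neq k$, so this assumption is implicit in the statement itself.
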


\begin{proof}
Since we are only dealing with sets in $\mathcal A(n)$, with a slight abuse of notation we write $\sigma^{(\ell)}(\mathcal B)$ instead of $\sigma^{(\ell)}(\mathcal B) \cap \mathcal A(n, \ell)$.

If $|\mathcal B| = n$, that is, $\mathcal B = \mathcal A(n, k)$ then $\sigma^{(\ell)}(\mathcal B) = \mathcal A(n, \ell)$ for all $1 \leq \ell < n$.
Suppose $|\mathcal B| < n$ and prove
\beq
\label{eq:2.3}
|\partial^+ \mathcal B| \geq |\mathcal B| + 1.
\eeq
Set $q = |\mathcal B|$.
Let $\mathcal B = \bigl\{A_k(y_1), \ldots, A_k(y_q)\bigr\}$.
Using $q < n$ we may assume $2 \leq y_1 < \ldots < y_q \leq n$.
Obviously $A_{k + 1}(y_j) \in \partial^+ \mathcal B$, $1 \leq j \leq q$.
Also $A_{k + 1}(y_1 - 1) \in \partial^+ \mathcal B$.
These are $q + 1$ distinct $(k + 1)$-arcs which completes the proof of \eqref{eq:2.3}.

The inequality
\beq
\label{eq:2.4}
|\partial^- \mathcal B| \geq |\mathcal B| + 1
\eeq
can be proved in the same way.

There is another way to prove it, namely, considering the family of complements $\mathcal B^c = \{[n] \setminus B : B \in \mathcal B\}$ and using
\beq
\label{eq:2.5}
(\partial^- \mathcal B)^c = \partial^+ \mathcal B^c.
\eeq
Now \eqref{eq:2.2} follows by applying \eqref{eq:2.3} or \eqref{eq:2.4} $|k - \ell|$ times.
\end{proof}

We should mention that \eqref{eq:2.5} shows the rather surprising fact that the sizes of immediate shades and shadows coincide for the circle (unless $k = 1$ or $n - 1$).

For further use let us find a meaningful formula for it.
One can associate a graph $G = G(\mathcal B)$ with $\mathcal B \subset \mathcal A(n, k)$.
First let $G(\mathcal A(n, k))$ be the $n$-cycle where $A_k(x)$ and $A_k(x + 1)$ are joined by an edge $(x = 1,2, \ldots, n)$.
Now $G(\mathcal B)$ is the subgraph of $G(\mathcal A(n, k))$ spanned by~$\mathcal B$.
Unless $\mathcal B = \mathcal A(n, k)$, $G(\mathcal B)$ is the vertex disjoint union of paths and isolated vertices.
Let $\lambda(\mathcal B)$ denote the number of (nonempty) connected components of $G(\mathcal B)$.

\setcounter{proposition}{3}
\begin{proposition}
\label{prop:2.4}
Suppose that $\mathcal B \subset \mathcal A(n,k)$, $|\mathcal B| < n$, $1 \leq k < n$.
Then
\begin{align}
\label{eq:2.6}
|\partial^+ \mathcal B| &= |\mathcal B| + \lambda(\mathcal B) \ \ \text{ unless } \ k = n - 1,\\
\label{eq:2.7}
|\partial^- \mathcal B| &= |\mathcal B| + \lambda(\mathcal B) \ \ \text{ unless } \ k = 1. \hspace*{30mm} \square
\end{align}
\end{proposition}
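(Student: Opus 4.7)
The plan is to prove \eqref{eq:2.6} directly by counting contributions of each path-component of $G(\mathcal B)$, and then to derive \eqref{eq:2.7} from \eqref{eq:2.6} by applying it to the complement family $\mathcal B^c$ together with \eqref{eq:2.5}.

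For \eqref{eq:2.6}, I would first note that when $k < n - 1$, every $k$-arc $A_k(y)$ is contained in exactly the two $(k+1)$-arcs $A_{k+1}(y - 1)$ and $A_{k+1}(y)$; the exception $k = n-1$ is forced precisely because in that case the only candidate $(k+1)$-arc would be $[n]$, which is excluded from $\mathcal A(n)$. Decompose $G(\mathcal B) = P_1 \sqcup \cdots \sqcup P_\lambda$ with $P_i = \{A_k(y_i), A_k(y_i + 1), \ldots, A_k(y_i + m_i - 1)\}$ and $\sum_i m_i = |\mathcal B|$. Each component $P_i$ contributes the $m_i + 1$ distinct $(k+1)$-arcs indexed by the consecutive head-positions $y_i - 1, y_i, \ldots, y_i + m_i - 1$, giving the obvious bound $|\partial^+\mathcal B| \leq |\mathcal B| + \lambda$.

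The main obstacle is upgrading this inequality to an equality, i.e., showing that contributions from different components are pairwise disjoint. For this I would use the maximality of $P_i$: the position $y_i + m_i$ is necessarily a gap (not in $\mathcal B$), so the cyclically next component $P_j$ satisfies $y_j \geq y_i + m_i + 1$, and hence $y_j - 1 \geq y_i + m_i > y_i + m_i - 1$, which is the largest head-position contributed by $P_i$. Since the $\lambda$ inter-component gaps cover $n - |\mathcal B|$ positions in total with each of size at least $1$, we obtain $|\mathcal B| + \lambda \leq n$, so the $\lambda$ contribution-intervals fit inside the cycle of $n$ head-positions without any wrap-around overlap. Summing gives $|\partial^+\mathcal B| = \sum_i (m_i + 1) = |\mathcal B| + \lambda$.

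For \eqref{eq:2.7}, I would assume $k > 1$ and apply \eqref{eq:2.6} to $\mathcal B^c \subset \mathcal A(n, n-k)$. The complementation map $A_k(y) \mapsto [n] \setminus A_k(y) = A_{n-k}(y + k)$ is a bijection $\mathcal A(n, k) \to \mathcal A(n, n-k)$ preserving adjacency in the cycle graph, so $|\mathcal B^c| = |\mathcal B|$ and $\lambda(\mathcal B^c) = \lambda(\mathcal B)$. The hypothesis $k > 1$ translates to $n - k < n - 1$, so \eqref{eq:2.6} applies to $\mathcal B^c$ and yields $|\partial^+\mathcal B^c| = |\mathcal B| + \lambda(\mathcal B)$; combined with $|\partial^-\mathcal B| = |\partial^+\mathcal B^c|$ (from \eqref{eq:2.5}), this is exactly \eqref{eq:2.7}.
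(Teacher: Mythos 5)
Your proof is correct and fills in an argument the paper simply asserts with a terminal $\square$. The counting-by-heads decomposition is the natural one and is consistent in spirit with the paper's proof of \eqref{eq:2.3} (where a single extra head $y_1 - 1$ gives $|\mathcal B| + 1$), and your derivation of \eqref{eq:2.7} from \eqref{eq:2.6} via complementation is exactly the route the paper signposts with \eqref{eq:2.5}. One small remark on the disjointness step: the key point (which you do state, but could make slightly more explicit) is that the cyclic order of the components $P_1, \ldots, P_\lambda$ induces the same cyclic order on the contribution intervals $I_i = [y_i - 1,\, y_i + m_i - 1]$, and traversing $I_1, \text{gap}, I_2, \text{gap}, \ldots, I_\lambda, \text{gap}$ goes around the cycle of head-positions exactly once; since each inter-component gap in $\mathcal B$ has size at least $1$, the left-extension of each $I_i$ by one position is absorbed into the preceding gap, so the $I_i$ tile a subset of $\mathbb{Z}/n\mathbb{Z}$ without overlap and $|\partial^+\mathcal B| = \sum_i (m_i + 1)$ exactly.
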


Unfortunately, Theorem \ref{th:2.3} or the Katona Circle Method do not seem suitable to provide a new proof of the Kruskal--Katona Theorem.
However they permit a short proof of Sperner's Shadow Theorem which served as the main tool of his proof of what is now known as Sperner Theorem.

\setcounter{theorem}{4}
\begin{theorem}[\cite{S}]
\label{th:2.5}
Let $n > k \geq 1$ be integers and $\mathcal F \subset {[n]\choose k}$.
Then \eqref{eq:2.8} and \eqref{eq:2.9} hold.
\begin{align}
\label{eq:2.8}
|\partial^+ \mathcal F| &\geq |\mathcal F| \cdot {n\choose k + 1}\Bigm/{n \choose k},\\
\label{eq:2.9}
|\partial^- \mathcal F| &\geq |\mathcal F| \cdot {n\choose k - 1}\Bigm/{n \choose k}.
\end{align}
\end{theorem}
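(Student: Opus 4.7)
The plan is to combine Katona's averaging argument (Lemma~\ref{lem:1.1}) with the Circular Kruskal--Katona Theorem (Theorem~\ref{th:2.3}), using the family of $k$-arcs $\mathcal A(n,k)$ as the ``test family'' $\mathcal G$. I will prove \eqref{eq:2.8}; the shadow inequality \eqref{eq:2.9} is completely analogous, using \eqref{eq:2.4} in place of \eqref{eq:2.3}.

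First, pick a uniformly random $\pi \in S_n$ and set $\mathcal F_\pi = \pi^{-1}(\mathcal F)$. Define the arc-trace
\[
\mathcal B_\pi \;=\; \mathcal F_\pi \cap \mathcal A(n,k),
\]
so that $\mathcal B_\pi \subset \mathcal A(n,k)$. Because $\pi$ is uniform, for each fixed $r$ the probability that the arc $A_k(r)$ lies in $\mathcal F_\pi$ equals $|\mathcal F|/\binom{n}{k}$, hence
\[
\mathbb E\bigl[|\mathcal B_\pi|\bigr] \;=\; \frac{n\,|\mathcal F|}{\binom{n}{k}}.
\]
By the exact same reasoning applied to the family $\partial^+ \mathcal F \subset \binom{[n]}{k+1}$,
\[
\mathbb E\bigl[|\partial^+ \mathcal F_\pi \cap \mathcal A(n,k+1)|\bigr] \;=\; \frac{n\,|\partial^+\mathcal F|}{\binom{n}{k+1}}.
\]

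Next I want a pointwise inequality, for every permutation $\pi$, between these two random variables. Note that $\partial^+ \mathcal B_\pi \subset \partial^+ \mathcal F_\pi \cap \mathcal A(n,k+1)$, since any $(k+1)$-arc containing an arc of $\mathcal B_\pi$ certainly contains a member of $\mathcal F_\pi$. Theorem~\ref{th:2.3} with $\ell = k+1$ gives
\[
|\partial^+ \mathcal B_\pi| \;\geq\; \min\bigl\{n,\; |\mathcal B_\pi|+1\bigr\} \;\geq\; |\mathcal B_\pi|,
\]
where the second inequality is trivial (and holds also in the degenerate cases $|\mathcal B_\pi|=0$ and $|\mathcal B_\pi|=n$, the latter because then $\partial^+ \mathcal B_\pi = \mathcal A(n,k+1)$). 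Therefore, for every $\pi$,
\[
|\partial^+ \mathcal F_\pi \cap \mathcal A(n,k+1)| \;\geq\; |\mathcal B_\pi|.
\]

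Taking expectations of both sides and plugging in the two expectation formulas above yields
\[
\frac{n\,|\partial^+\mathcal F|}{\binom{n}{k+1}} \;\geq\; \frac{n\,|\mathcal F|}{\binom{n}{k}},
\]
which rearranges to \eqref{eq:2.8}. The analogous application with $\ell = k-1$ (using \eqref{eq:2.4} to get $|\partial^- \mathcal B_\pi| \geq |\mathcal B_\pi|$) proves \eqref{eq:2.9}. The main point requiring care is the step $|\partial^+ \mathcal B_\pi| \geq |\mathcal B_\pi|$ in the boundary case $|\mathcal B_\pi| = n$, where Theorem~\ref{th:2.3} only delivers the bound $n$ rather than $n+1$; fortunately this still matches $|\mathcal B_\pi|$, so no slack is lost and the averaging closes.
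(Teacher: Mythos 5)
Your proposal is correct and is essentially the paper's proof: average the circular Kruskal--Katona bound (Theorem~\ref{th:2.3}) over random cyclic permutations, using that the arc-shade $\partial^+\mathcal B_\pi$ sits inside $\partial^+\mathcal F_\pi \cap \mathcal A(n,k+1)$, then compare expectations. The only thing the paper adds that you omit is a closing remark on the degenerate cases $k=n-1$ (for $\partial^+$) and $k=1$ (for $\partial^-$), where $\mathcal A(n,k+1)$ resp.\ $\mathcal A(n,k-1)$ is not defined; there one simply observes $\alpha_n=1$, $\alpha_0=1$ directly.
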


The proof is almost trivial.
Define $\alpha_k, \alpha_{k + 1}, \alpha_{k - 1}$ as follows.
$\alpha_k = |\mathcal F| \bigm/{n\choose k}$, $\alpha_{k + 1} = |\partial^+\mathcal F|\bigm/{n\choose k + 1}$,
$\alpha_{k - 1} = |\partial^-\mathcal F|\bigm/{n\choose k - 1}$.
Then \eqref{eq:2.8} and \eqref{eq:2.9} claim $\alpha_{k + 1} \geq \alpha_k$ and $\alpha_{k - 1} \geq \alpha_k$, respectively.

As to their intersection with $\mathcal A(n)$, the expected size is $\alpha_k n$, $\alpha_{k + 1} n$ and $\alpha_{k - 1} n$, respectively.

If $|\mathcal F \cap \mathcal A(n)| = n$ then the immediate shade and shadow share this value too.
Otherwise Theorem \ref{th:2.3} implies
$$
|\partial^+ \mathcal F \cap \mathcal A(n)| > |\mathcal F\cap \mathcal A(n)| \ \ \text{ and } \ \
|\partial^- \mathcal F \cap \mathcal A(n)| > |\mathcal F\cap \mathcal A(n)|.
$$
These imply $\alpha_{k - 1} \geq \alpha_k$ and $\alpha_{k + 1} \geq \alpha_k$.\hfill $\square$

\smallskip
Let us mention that formally there is a problem with $\partial^+$ if $k = n - 1$ and $\partial^-$ in the case $k = 1$.
However, if $\mathcal F \neq \emptyset$ then in these cases $\alpha_n = 1$ and $\alpha_0 = 1$ hold respectively.

\section{Intersecting families}
\label{sec:3}

A family $\mathcal F$ is called intersecting if $F \cap F' \neq \emptyset$ for all $F, F'\in \mathcal F$.
Let us state the Erd\H{o}s--Ko--Rado Theorem, one of the most important results in extremal set theory.

\begin{theorem}
\label{th:3.1}
Suppose that $\mathcal F \subset {[n]\choose k}$ is intersecting, $n \geq 2k$.
Then
\beq
\label{eq:3.1}
|\mathcal F| \leq {n - 1\choose k - 1}.
\eeq
\end{theorem}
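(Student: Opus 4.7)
The plan is to apply Katona's Averaging Argument (Lemma \ref{lem:1.1}) with $\mathcal G = \mathcal A(n,k)$, exactly as Katona originally did. Since $|\mathcal A(n,k)| = n$, the inequality \eqref{eq:1.1} becomes
\[
|\mathcal F|\Bigm/{n\choose k} \leq \max_{\pi \in S_n} \frac{|\pi(\mathcal A(n,k)) \cap \mathcal F|}{n}.
\]
So the whole task reduces to a purely combinatorial statement on one circle: for every cyclic arrangement $(x_1,\ldots,x_n)$ of $[n]$, at most $k$ of the $n$ arcs $A_k(1),\ldots,A_k(n)$ can belong to an intersecting family. Granted this, the right-hand side above is at most $k/n$, and rewriting $\frac{k}{n}{n\choose k} = {n-1\choose k-1}$ yields \eqref{eq:3.1}.

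The heart of the argument is therefore the circular claim: \emph{if $n \geq 2k$ and $\mathcal B \subset \mathcal A(n,k)$ is intersecting, then $|\mathcal B| \leq k$.} I would prove it as follows. Fix some $A_k(r) \in \mathcal B$. Any arc $A_k(s)$ that meets $A_k(r)$ must have its head $s$ within distance $k-1$ of $r$ along the cycle, so $s \in \{r-k+1,\ldots,r+k-1\}$ (mod $n$); any arc with head outside this range is disjoint from $A_k(r)$ precisely because $n \geq 2k$. This leaves $2k-1$ candidate heads. The key observation is that the $k-1$ pairs
\[
\bigl\{A_k(r+i),\ A_k(r-k+i)\bigr\}, \qquad i = 1,2,\ldots, k-1,
\]
consist of arcs that are themselves disjoint (again because $n \geq 2k$), so $\mathcal B$ can contain at most one arc from each pair. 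Adding the arc $A_k(r)$ itself gives $|\mathcal B| \leq 1 + (k-1) = k$, as claimed.

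The main obstacle, if any, is this pairing step; it is the only place where the hypothesis $n \geq 2k$ is actually used, and one must be careful that the $k-1$ pairs really are disjoint pairs of arcs (no repeats, and each pair genuinely disjoint on the circle). Once the pairing is verified, plugging the bound into Lemma \ref{lem:1.1} is a one-line calculation and completes the proof of Theorem \ref{th:3.1}.
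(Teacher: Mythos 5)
Your proposal is correct and is essentially the paper's own proof: reduce to the circular claim (the paper's Theorem \ref{th:3.2}), prove it by fixing one arc $B_0$ and noting that the $2(k-1)$ remaining candidate arcs split into $k-1$ pairs of mutually disjoint arcs when $n \geq 2k$, then average via Lemma \ref{lem:1.1}. Your pairing $\{A_k(r+i),\ A_k(r-k+i)\}$ is the same pairing the paper describes (the arc with tail $x_i$ paired with the arc with head $x_{i+1}$), just parameterized by heads instead of by head/tail membership in $B_0$.
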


Actually Katona invented the Circle Method in order to give a nice simple proof of \eqref{eq:3.1}.
To achieve this he proved:

\begin{theorem}[Circular Erd\H{o}s--Ko--Rado Theorem, \cite{K3}]
\label{th:3.2}
Suppose that $\mathcal B \subset \mathcal A(n, k)$ is intersecting, $n \geq 2k$.
Then
\beq
\label{eq:3.2}
|\mathcal B| \leq k.
\eeq
\end{theorem}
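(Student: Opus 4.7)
The plan is to exploit the rotational symmetry of the cycle and reduce the problem to a neat pairing argument on the $2k-1$ arcs that can possibly meet a fixed one. If $\mathcal{B} = \emptyset$ the bound is vacuous, so assume $\mathcal{B} \neq \emptyset$. By rotating the cyclic permutation, I may assume $A_k(1) \in \mathcal{B}$.

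First I would identify, purely combinatorially, which arcs intersect $A_k(1) = \{x_1, \ldots, x_k\}$. A direct case analysis on the head position $r$ shows that $A_k(r) \cap A_k(1) \neq \emptyset$ exactly when $r \in \{1,2,\ldots,k\} \cup \{n-k+2,\ldots,n\}$: the first block shares $x_r$ with $A_k(1)$, and the second block wraps around and contains $x_1$. All other heads $r \in \{k+1, \ldots, n-k+1\}$ give arcs entirely inside $\{x_{k+1},\ldots, x_n\}$, hence disjoint from $A_k(1)$. This uses $n \geq 2k$ only to guarantee that the middle block is nonempty (and so nothing is double-counted). Thus $\mathcal{B}$ is contained in a set of $2k-1$ arcs.

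Next I would pair the $2k-2$ arcs other than $A_k(1)$ as
\[
\bigl(A_k(1+i),\, A_k(n-k+1+i)\bigr), \quad i = 1, 2, \ldots, k-1.
\]
The key verification is that each such pair consists of disjoint arcs. The arc $A_k(1+i)$ occupies positions $\{1+i, \ldots, k+i\}$, while $A_k(n-k+1+i)$ occupies $\{n-k+1+i, \ldots, n, 1, \ldots, i\}$ (wrapping around). These two sets are disjoint precisely when $k+i < n-k+1+i$, i.e.\ $n \geq 2k$. Because $\mathcal{B}$ is intersecting, it can contain at most one arc from each of these $k-1$ pairs, together with $A_k(1)$ itself, giving $|\mathcal{B}| \leq 1 + (k-1) = k$.

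The only nontrivial step is the pairing verification, but it is essentially just the observation that on a cycle of length $\geq 2k$, arcs of length $k$ whose heads differ by exactly $n-k$ are disjoint. Everything else is bookkeeping. Note that Theorem \ref{th:3.1} then follows immediately by applying Lemma \ref{lem:1.1} with $\mathcal{G} = \mathcal{A}(n,k)$: the right-hand side is at most $k/n$, giving $|\mathcal{F}| \leq \binom{n}{k} \cdot k/n = \binom{n-1}{k-1}$.
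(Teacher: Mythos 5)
Your proof is correct and follows essentially the same strategy as the paper's: fix an arc $B_0 = A_k(1) \in \mathcal B$, characterize the $2k-1$ arcs meeting it, pair the $2k-2$ arcs other than $B_0$ into $k-1$ disjoint pairs, and conclude $|\mathcal B| \leq 1 + (k-1) = k$. Your pairs $\bigl(A_k(1+i), A_k(n-k+1+i)\bigr)$ are exactly the paper's pairs, there phrased as the arc with head $x_{i+1}$ and the arc with tail $x_i$.
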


\begin{proof}
Choose a set $B_0 \in \mathcal B$.
By symmetry assume $B_0 = \{x_1, \dots, x_k\}$.
Since $\mathcal B$ is intersecting all other members $B \in \mathcal B$ have either their head or tail in $B_0$.
In $\mathcal B \setminus \{B_0\}$ the candidates for head are $x_2, \dots, x_k$ and the candidates for tail are $x_1, \dots, x_{k - 1}$.
These are $2(k - 1)$ candidates.
The point is that for $n \geq 2k$ the $k$-arc with tail $x_i$ and the $k$-arc with head $x_{i + 1}$ are disjoint.
Hence at most one of them is in $\mathcal B \setminus \{B_0\}$.

Consequently,
$$
\hspace*{30mm}
|\mathcal B| = 1 + |\mathcal B\setminus \{ B_0\}| \leq 1 + \frac{2(k - 1)}{2} = k.
\hspace*{30mm}\qedhere
$$
\end{proof}

Daykin \cite{D} gave a short proof of the original Erd\H{o}s--Ko--Rado Theorem based on the Kruskal--Katona Theorem.
Let us use the Circular Kruskal--Katona Theorem to prove an extension of Theorem \ref{th:3.2}.

\setcounter{definition}{2}
\begin{definition}
\label{def:3.3}
Two families $\mathcal F$ and $\mathcal G$ are called cross-intersecting if $F \cap G \neq \emptyset$ for all $F \in \mathcal F$, $G \in \mathcal G$.
\end{definition}

\setcounter{theorem}{3}
\begin{theorem}
\label{th:3.4}
Let $k, \ell \geq 1$ be integers, $k + \ell \leq n$.
Suppose that $\emptyset \neq \mathcal B \subset \mathcal A(n, k)$ and $\emptyset \neq \mathcal C \subset \mathcal A(n, \ell)$ are cross-intersecting.
Then
\beq
\label{eq:3.3}
|\mathcal B| + |\mathcal C| \leq k + \ell.
\eeq
Moreover, for $k + \ell < n$, equality implies that $\lambda(\mathcal B) = \lambda(\mathcal C) = 1$.
\end{theorem}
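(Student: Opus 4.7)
The plan is to translate the cross-intersecting hypothesis into a non-containment statement and then apply the Circular Kruskal--Katona Theorem (Theorem~\ref{th:2.3}) together with Proposition~\ref{prop:2.4}.

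First, I would introduce the complement family $\mathcal C^c = \{[n] \setminus C : C \in \mathcal C\}$, which lies in $\mathcal A(n, n-\ell)$ because the complement of an arc on the $n$-cycle is again an arc. The cross-intersecting condition is equivalent to saying that no $B \in \mathcal B$ is contained in any member of $\mathcal C^c$. Hence $\mathcal B$ and $\sigma^{(k)}(\mathcal C^c) \cap \mathcal A(n, k)$ are disjoint subfamilies of $\mathcal A(n, k)$, so using $|\mathcal A(n, k)| = n$,
$$|\mathcal B| \leq n - |\sigma^{(k)}(\mathcal C^c) \cap \mathcal A(n, k)|.$$

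Next, since $n - \ell \geq k$, I would apply Theorem~\ref{th:2.3} to $\mathcal C^c \subset \mathcal A(n, n - \ell)$ with target uniformity $k$ to obtain $|\sigma^{(k)}(\mathcal C^c) \cap \mathcal A(n, k)| \geq \min\{n, |\mathcal C| + n - \ell - k\}$. The minimum cannot equal $n$, for otherwise the previous inequality would force $\mathcal B = \emptyset$, contradicting the hypothesis. Hence the minimum is $|\mathcal C| + n - \ell - k$, and substituting yields $|\mathcal B| + |\mathcal C| \leq k + \ell$, which is \eqref{eq:3.3}.

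For the equality case under $k + \ell < n$, equality in \eqref{eq:3.3} forces equality in the Circular Kruskal--Katona bound applied to $\mathcal C^c$. The proof of that bound iterates $\partial^-$ exactly $n - \ell - k \geq 1$ times, and by Proposition~\ref{prop:2.4} each step enlarges the current family by exactly $\lambda$ of that family (the proposition applies because the intermediate uniformities run from $n - \ell$ down to $k + 1 \geq 2$, and all intermediate sizes stay below $n$ since the final size $n - |\mathcal B|$ is). So $\lambda = 1$ must hold at every step; in particular $\lambda(\mathcal C^c) = 1$. Since complementation on the cycle shifts every arc's head by a fixed amount, the graphs $G(\mathcal C)$ and $G(\mathcal C^c)$ are isomorphic, hence $\lambda(\mathcal C) = 1$. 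Interchanging the roles of $\mathcal B$ and $\mathcal C$ then gives $\lambda(\mathcal B) = 1$.

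The main obstacle I anticipate is the equality analysis: one must check carefully that $\lambda = 1$ is indeed preserved under the iterated shadow, i.e., that a single maximal path in $G(\mathcal A(n, m))$ descends under $\partial^-$ to a single slightly longer path in $G(\mathcal A(n, m - 1))$. This is a direct verification from the definition of $G(\mathcal A(n, m))$, but writing it down (together with the $G(\mathcal C) \cong G(\mathcal C^c)$ step) is the only place where one needs to touch the combinatorics of the cycle explicitly.
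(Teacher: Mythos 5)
Your proof is correct, but it follows a genuinely different route from the paper's. The paper proves Theorem~\ref{th:3.4} by reverse induction on $k+\ell$: the base case $k+\ell=n$ is handled via complementation ($\mathcal B^c$ and $\mathcal C$ are disjoint subfamilies of $\mathcal A(n,\ell)$), and the inductive step replaces $\mathcal B$ or $\mathcal C$ by $\partial^+\mathcal B$ or $\partial^+\mathcal C$, using \eqref{eq:2.3} and the observation that $(\partial^+\mathcal B,\mathcal C)$ and $(\mathcal B,\partial^+\mathcal C)$ remain cross-intersecting. Your argument instead adapts Daykin's derivation of Erd\H{o}s--Ko--Rado from Kruskal--Katona to the circle: complement $\mathcal C$ to $\mathcal C^c \subset \mathcal A(n,n-\ell)$, translate cross-intersection into ``$\mathcal B$ avoids the circular $k$-shadow of $\mathcal C^c$,'' and invoke Theorem~\ref{th:2.3} in one shot. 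Your approach needs no base case and packages all the shadow steps at once, but the equality analysis is slightly heavier: you must track that $\lambda=1$ holds at every intermediate level of the iterated $\partial^-$ (with the intermediate sizes staying below $n$) and additionally verify that complementation on the cycle is a graph isomorphism $G(\mathcal C)\cong G(\mathcal C^c)$ (which is indeed true, since $[n]\setminus A_\ell(r)=A_{n-\ell}(r+\ell)$, so heads shift uniformly), whereas the paper's induction makes the equality conclusion immediate from a single application of \eqref{eq:2.6} at each step. Both arguments lean on the same engine---the Circular Kruskal--Katona Theorem---but yours is a more direct, ``Daykin-style'' reduction while the paper's is an inductive unrolling of the same shadow estimate.
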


\begin{proof}
Let us first deal with the case $k + \ell = n$.
Consider the family of complements, $\mathcal B^c = \{[n] \setminus B : B \in \mathcal B\}$.
Since $[n]\setminus B$ has empty intersection with $B$, it cannot be a member of $\mathcal C$.
Thus $\mathcal B^c \cap \mathcal C = \emptyset$.
This implies
$$
|\mathcal B| + |\mathcal C| = |\mathcal B^c| + |\mathcal C| \leq n \ \ \text{ as desired.}
$$
Now let us apply reverse induction on $k + \ell$.
Supposing that \eqref{eq:3.3} holds for the pairs $(k + 1, \ell)$, $(k, \ell + 1)$, let us prove it for $(k, \ell)$.
Since $k + \ell \leq n - 1$ and both $\ell$ and $k$ are positive, $k < n - 1$ and $\ell < n - 1$ follow.
In view of \eqref{eq:2.3},
$$
|\partial^+ \mathcal C| \geq |\mathcal C| + 1 \ \ \text{ and } \ \ |\partial^+\mathcal B| \geq |\mathcal B| + 1
$$
follow.

Since $(\mathcal B, \partial^+ \mathcal C)$ and $(\partial^+\mathcal B, \mathcal C)$ are also pairs of cross-intersecting families,
$$
|\mathcal B| + |\mathcal C| + 1 \leq |\partial^+ \mathcal B| + |\mathcal C| \leq k + 1 + \ell
$$
and
$$
|\mathcal B| + |\mathcal C| + 1 \leq |\mathcal B| + |\partial^+ \mathcal C| \leq k + \ell + 1
$$
follow.
Both of them imply \eqref{eq:3.3}.
What is more, in view of \eqref{eq:2.6}, should we have equality in \eqref{eq:3.3}, $\lambda(\mathcal B) = \lambda(\mathcal C) = 1$ must hold, too.
That is, in case of equality both $\mathcal B$ and $\mathcal C$ consist of consecutive arcs.
\end{proof}

Note that applying \eqref{eq:3.3} for the case $\mathcal B = \mathcal C$, \eqref{eq:3.2} follows.
Moreover, in case of equality $\mathcal B$ must consist of $k$ consecutive arcs of length~$k$.
Equivalently, $\mathcal B$ consists of the $k$ arcs of length $k$ containing a fixed element.

We still owe the reader the proof of \eqref{eq:3.1}.
It is very simple.
Let us apply \eqref{eq:1.1} with $\pi(\mathcal G) = \mathcal A(n, k)$.
Then $|\mathcal A(n, k)| = n$ and by \eqref{eq:3.2}, $|\pi(\mathcal G) \cap \mathcal F| \leq k$.
Thus $|\mathcal F| \leq \frac{k}{n} {n\choose k} = {n - 1\choose k - 1}$. \hfill $\square$

Let us mention that in \cite{FF2} another simple proof of the Erd\H{o}s--Ko--Rado Theorem is given.

\section{Multiply intersecting families}
\label{sec:4}

\begin{definition}
\label{def:4.1}
Let $s, r \geq 2$ be integers and let $\mathcal F \subset 2^{[n]}$.
If $F_1 \cap \ldots \cap F_s \neq \emptyset$ for all $F_1, \ldots, F_s \in \mathcal F$ then $\mathcal F$ is called \emph{$s$-wise intersecting}.
If $F_1 \cup \ldots \cup F_r \neq [n]$ for all $F_1, \ldots, F_r \in \mathcal F$ then $\mathcal F$ is called \emph{$r$-wise union}.
\end{definition}

The following generalization of the Erd\H{o}s--Ko--Rado Theorem was proved in one of my earliest papers.

\setcounter{theorem}{1}
\begin{theorem}[\cite{F76}]
\label{th:4.2}
Let $n, k, s$ be positive integers, $\mathcal F \subset {[n]\choose k}$, $n > s \geq 2$, $sk \leq (s - 1)n$.
If $\mathcal F$ is $s$-wise intersecting then
\beq
\label{eq:4.1}
|\mathcal F| \leq {n - 1\choose k - 1}.
\eeq
\end{theorem}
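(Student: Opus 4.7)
The plan is to mirror the strategy of the paper's earlier sections by first proving a circular analogue of Theorem~\ref{th:4.2} and then applying Katona's averaging argument (Lemma~\ref{lem:1.1}). The circular analogue I aim to establish is: \emph{if $\mathcal B \subset \mathcal A(n,k)$ is $s$-wise intersecting and $sk \leq (s-1)n$, then $|\mathcal B| \leq k$}. Granted this, Theorem~\ref{th:4.2} follows immediately by applying Lemma~\ref{lem:1.1} with $\mathcal G = \mathcal A(n,k)$ (of size $n$): for every $\pi \in S_n$ the family $\pi(\mathcal G) \cap \mathcal F$ is $s$-wise intersecting as a subfamily of $\mathcal F$ and consists of $k$-arcs under the cyclic arrangement $\pi$, hence has at most $k$ members by the circular analogue; Lemma~\ref{lem:1.1} then gives $|\mathcal F|/\binom{n}{k} \leq k/n$, i.e.\ $|\mathcal F| \leq \binom{n-1}{k-1}$.

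To prove the circular analogue I would encode $\mathcal B$ by the set $H \subset [n]$ of heads of its arcs. A direct examination of the cyclic gaps among any $s$ chosen heads shows that $\mathcal B$ is $s$-wise intersecting if and only if no $s$-subset of $H$ has all of its $s$ consecutive cyclic gaps $\leq T := n - k$. Supposing $|\mathcal B| \geq k+1$ for contradiction, I reduce to $|H| = k+1$, in which case $|[n] \setminus H| \leq n - k - 1$ forces every cyclic gap of $H$ itself to be $\leq T$. The entire remaining task is now the following purely combinatorial \emph{partition lemma}: a cyclic sequence of positive integers $g_1, \dots, g_{k+1}$ summing to $n$, each $\leq T = n-k$, admits a partition into $s$ contiguous groups each of sum $\leq T$, provided $sT \geq n$. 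Once this is in hand, the $s$ cut-points form an $s$-subset of $H$ whose consecutive cyclic gaps (the group sums) are all $\leq T$, the desired contradiction.

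For the partition lemma itself I would proceed by induction on $q - s$ via adjacent-pair merging: if some adjacent pair of gaps sums to $\leq T$, merge them and recurse with the same $s$. The ``no valid merge'' case yields $2n = \sum (g_i + g_{i+1}) \geq q(T+1)$; combined with the identity $n = k + T$ peculiar to the starting configuration $q = k+1$, this forces $q \leq 2$ when $T > 1$, which together with the assumption $(s-1)T \geq k$ pins us at the trivial boundary $q = s$, handled by the singleton partition.

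The main obstacle is that a naive greedy merge can strand us in a deadlocked intermediate state like the cyclic sequence $(T, T-1, T, T-1, \dots)$, in which no further adjacent pair sums to $\leq T$ even though $q > s$ --- for instance, starting from $(2,1,1,1,1)$ with $T = 2, s = 3$ and merging the central $(1,1)$ yields the dead end $(2,1,2,1)$, which admits no partition into $3$ groups each of sum $\leq 2$. The resolution is to merge cleverly, e.g.\ always pairing a small element with a large neighbour; or, more robustly, to prove the partition lemma directly via a partial-sum and cyclic-shift construction. Verifying that the starting constraints $q = k+1$, $n = k+T$, and $sT \geq n$ jointly preclude any forced deadlock is the technical heart of the argument.
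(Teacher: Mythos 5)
Your overall strategy---prove a circular $s$-wise EKR bound $|\mathcal B| \le k$ for $\mathcal B \subset \mathcal A(n,k)$ and then invoke Lemma~\ref{lem:1.1}---is a genuinely different route from the paper, which instead passes through the cross-union inequality of Proposition~\ref{prop:4.4} (proved via the partition-and-count base case plus the Circular Kruskal--Katona shadow step) and then complements to get Corollary~\ref{cor:4.6}. Your reformulation via heads and cyclic gaps is correct: with $T = n-k$, the arcs $A_k(h_1),\dots,A_k(h_s)$ have empty intersection exactly when every consecutive cyclic gap of $\{h_1,\dots,h_s\}$ is at most $T$, and if $|H|=k+1$ then $|[n]\setminus H|=n-k-1$ does force every gap of $H$ to be $\le T$. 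So your ``partition lemma'' (a cyclic sequence of $k+1$ positive integers summing to $n$, each $\le T=n-k$, splits into $s$ contiguous blocks each $\le T$ whenever $sT\ge n$ and $s\le k+1$) is indeed what is needed, and that lemma is true.

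The gap is that you have not proved the partition lemma, and the one concrete argument you propose does not work. Your adjacent-pair-merging induction destroys the invariant $q=k+1$ after a single merge: you show that ``no mergeable adjacent pair'' is impossible at $q=k+1$ (via $(k-1)(T-1)\le 0$), but after one merge the count drops to $q=k$ while $T$ stays fixed, and the same computation no longer rules out a deadlock. You recognize this yourself with the $(2,1,2,1)$ example, which is reachable from the legal start $(2,1,1,1,1)$ by a bad merge and admits no partition into $3$ blocks each $\le 2$. The proposed fixes---``pair a small element with a large neighbour'' or ``a partial-sum and cyclic-shift construction''---are not carried out, and the first one is not even well defined in your own example, since the small $1$ adjacent to the large $2$ cannot be merged ($1+2>T$). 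Since you label this lemma ``the technical heart of the argument,'' the proof as written is incomplete. (A correct route does exist: consider the greedy map $\varphi$ sending a head $P$ to the farthest head within distance $T$; along any cycle of $\varphi$ with $m$ steps and winding number $w$ the slack at each step is strictly less than the excess $g-1$ of the gap just after the landing head, these excesses are distinct and sum to $T-1$, whence $wn > mT - T + 1$, so $m\le ws$, and averaging the $s$-step advance around the cycle produces a start that wraps in $\le s$ steps. But none of this is in your proposal.) A second, smaller omission: when $k+1<s$ there is no ``$s$-subset of $H$'' to select; you need to note separately that in this case all $k+1$ arcs already have empty intersection and that $s$-wise intersecting implies $(k+1)$-wise intersecting.
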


In \cite{F87} I showed that unless $s = 2$ and $n = 2k$, equality holds in \eqref{eq:4.1} only if
$\mathcal F = \left\{F \in {[n]\choose k} : x \in F \right\}$ for some $x \in [n]$.
The original proof of \eqref{eq:4.1} was motivated by Katona's proof of \eqref{eq:3.1}.
Let us present a different proof here.

\setcounter{definition}{2}
\begin{definition}
\label{def:4.3}
The families $\mathcal F_1, \ldots, \mathcal F_s$ are called cross-intersecting if $F_1 \cap \ldots \cap F_s \neq \emptyset$ for all choices of $F_1 \in \mathcal F_1, \ldots, F_s \in \mathcal F_s$.
Similarly, these families are called cross-union if $F_1\cup \ldots \cup F_s \neq [n]$ for all choices of $F_1 \in \mathcal F_1, \ldots, F_s \in \mathcal F_s$.
\end{definition}

The next statement generalizes Theorem \ref{th:3.4}.

\setcounter{proposition}{3}
\begin{proposition}
\label{prop:4.4}
Suppose that $\ell_1, \ldots, \ell_s$ are positive integers, $s \leq n$, $\ell_i < n$, satisfying $\ell_1 + \ldots + \ell_s \geq n$.
Let $\mathcal B_i \subset \mathcal A(n, \ell_i)$, $1 \leq i \leq s$ and suppose that $\mathcal B_1, \ldots, \mathcal B_s$ are non-empty and cross-union.
Then
\beq
\label{eq:4.2}
|\mathcal B_1| + \ldots + |\mathcal B_s| \leq (n - \ell_1) + \ldots + (n - \ell_s).
\eeq
\end{proposition}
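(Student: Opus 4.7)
The plan is to prove \eqref{eq:4.2} by upward induction on $S := \sum_{i=1}^s \ell_i$, with base case $S = n$. The key tool is the immediate shadow $\partial^-$: unlike the shade $\partial^+$ that was used to prove Theorem \ref{th:3.4}, taking shadows \emph{preserves} cross-union, because any $B_i' \in \partial^- \mathcal B_i$ is contained in some $B_i \in \mathcal B_i$, so $B_i' \cup \bigcup_{j \neq i} B_j \subset B_i \cup \bigcup_{j \neq i} B_j \neq [n]$. Moreover, Proposition \ref{prop:2.4} guarantees $|\partial^- \mathcal B_i| \geq |\mathcal B_i| + 1$ whenever $\ell_i \geq 2$ and $|\mathcal B_i| < n$.

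For the inductive step at $S \geq n+1$, I would pick any index $i$ with $\ell_i \geq 2$ and $|\mathcal B_i| < n$ and apply the induction hypothesis to the $s$-tuple $(\mathcal B_1, \ldots, \partial^- \mathcal B_i, \ldots, \mathcal B_s)$, whose total is $S-1 \geq n$. This yields $|\partial^- \mathcal B_i| + \sum_{j \neq i} |\mathcal B_j| \leq \sum_j(n - \ell_j) + 1$, and cancelling the two $+1$'s against $|\partial^- \mathcal B_i| \geq |\mathcal B_i| + 1$ recovers \eqref{eq:4.2}.

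For the base case $S = n$ I would sub-induct on $s$. When $s = 2$, the complement $\mathcal B_1^c$ lies in $\mathcal A(n, \ell_2)$ and is disjoint from $\mathcal B_2$ (a common element $B_2 = B_1^c$ would force $B_1 \cup B_2 = [n]$), giving $|\mathcal B_1| + |\mathcal B_2| \leq n$; this is essentially the $s = 2$ case of Theorem \ref{th:3.4}. For $s \geq 3$ with $S = n$, a further argument, for example merging two families or a direct element-counting, is required.

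The principal obstacle lies in the degenerate configuration in which no suitable index exists for the inductive step, that is, every $i$ has either $\ell_i = 1$ or $\mathcal B_i = \mathcal A(n, \ell_i)$. Since $S \geq n+1$ forces some $\ell_i \geq 2$, at least one family must equal the full $\mathcal A(n, \ell_i)$, and the cross-union condition then encodes the strictly stronger restriction that $\bigcup_{j \neq i} B_j$ cannot contain any $(n - \ell_i)$-arc. A naive iteration of $\partial^-$ through the full families leaves a persistent off-by-one gap; closing this gap, by exploiting the fullness of $\mathcal B_i$ structurally or via a refined induction, is where I expect the bulk of the technical effort to lie.
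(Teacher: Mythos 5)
Your overall strategy --- reverse induction on $S = \sum_i \ell_i$ down from $S \geq n+1$ to the base $S = n$, driven by $\partial^-$ and the $+1$ gain from the Circular Kruskal--Katona Theorem --- is exactly the paper's, and your observation that shadows (unlike shades) preserve the cross-union property is correct.

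The genuine gap is the base case $S = n$ for $s \geq 3$, which you leave unresolved; it is the crux of the whole proof. The paper's argument is a clean rotating-partition count. For each $p \in [n]$ place arcs consecutively around the circle: $D_1^{(p)}$ is the $\ell_1$-arc with head $x_p$, then $D_2^{(p)}$ is the $\ell_2$-arc with head $x_{p+\ell_1}$, and so on. Since $\sum_i \ell_i = n$, the sets $D_1^{(p)}, \ldots, D_s^{(p)}$ partition $[n]$, so the cross-union hypothesis forces $D_i^{(p)} \notin \mathcal B_i$ for at least one index $i = i(p)$. For each fixed $i$ the arcs $D_i^{(p)}$ have pairwise distinct heads as $p$ runs over $[n]$, so the $n$ choices of $p$ produce $n$ distinct missing pairs $\bigl(i(p), D_{i(p)}^{(p)}\bigr)$. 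Hence $\sum_i \bigl(n - |\mathcal B_i|\bigr) \geq n$, i.e.\ $\sum_i |\mathcal B_i| \leq sn - n = \sum_i (n - \ell_i)$. Your $s=2$ complement argument is a special instance of this count; without the general case the induction cannot get off the ground.

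Your worry about the degenerate configuration is, by contrast, a perceptive one that the paper itself glosses over. The inequality $|\partial^- \mathcal B_1| \geq |\mathcal B_1| + 1$ cited as \eqref{eq:4.3} requires $|\mathcal B_1| < n$ (compare the $\min$ in \eqref{eq:2.2}), and the paper applies it without verifying this; the configuration you describe does occur: for instance $n = 10$, $s = 5$, $\ell_1 = 7$, $\ell_2 = \cdots = \ell_5 = 1$, $\mathcal B_1 = \mathcal A(10,7)$ full, $\mathcal B_j = \bigl\{\{x_1\}\bigr\}$ for $j \geq 2$ is cross-union with $\sum \ell_i = 11 > n$, and here the only index with $\ell_i \geq 2$ carries a full family. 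So you and the paper share that off-by-one lacuna. It is real, but secondary; the missing base case for $s \geq 3$ is the primary deficiency of your proposal.
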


\begin{proof}
First we prove \eqref{eq:4.2} in the special case $\ell_1 + \ldots + \ell_s = n$.
Let $1 \leq p \leq n$ and define the $s$-tuple $\bigl(D_1^{(p)},\ldots, D_s^{(p)}\bigr)$ of arcs as follows
$D_1^{(p)} = A_{\ell_1}(n, p)$, $D_2^{(p)} = A_{\ell_2}(n, p + \ell_1), \ldots, D_s^{(p)} = A_{\ell_s} (n, p + \ell_1 + \ldots + \ell_{s - 1})$.
Then $D_1^{(p)}, \ldots, D_s^{(p)}$ partition $[n]$.
Consequently at least one of $D_i^{(p)} \in \mathcal B_i$, $1 \leq i \leq s$ fails.
The $n$ choices for $p$ provide altogether at least $n = \ell_1 + \ell_2 + \ldots + \ell_s$ missing sets.
Therefore
$|\mathcal B_1| + \ldots + |\mathcal B_s| \leq sn - (\ell_1 + \ell_2 + \ldots + \ell_s)$, equivalent to \eqref{eq:4.2}.

In the general case let us apply induction on $\ell_1 + \ldots + \ell_s$.
Suppose that \eqref{eq:4.2} is proved for $\ell_1 + \ldots + \ell_s = m$ and consider the case $\ell_1 + \ldots + \ell_s = m + 1$.
Since $s \leq n < m + 1$, we may assume by symmetry that $\ell_1 \geq 2$.
By the Circular Kruskal--Katona Theorem
\beq
\label{eq:4.3}
|\partial^- \mathcal B_1| \geq |\mathcal B_1| + 1.
\eeq
The $s$ families $\partial^- \mathcal B_1, \mathcal B_2, \ldots, \mathcal B_s$ are obviously cross-union.
Applying the induction hypothesis yields
$$
|\partial^-\mathcal B_1| + |\mathcal B_2| + \ldots + |\mathcal B_s| \leq sn - \ell_1 - \ell_2 - \ldots - \ell_s + 1.
$$
Using \eqref{eq:4.3}, \eqref{eq:4.2} follows.
\end{proof}

\setcounter{corollary}{4}
\begin{corollary}
\label{cor:4.5}
Let $n, \ell, s$ be positive integers, $n \geq s \geq 2$, $1 \leq \ell \leq n \leq s\ell$.
Suppose that $\mathcal B \subset \mathcal A(n, \ell)$ is $s$-wise union.
Then
\beq
\label{eq:4.4}
|\mathcal B| \leq n - \ell.
\eeq
\end{corollary}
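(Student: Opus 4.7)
The plan is to deduce the corollary as the diagonal case of Proposition~\ref{prop:4.4}, by taking all $s$ families equal to $\mathcal{B}$.

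First I would dispose of trivialities: if $\mathcal{B} = \emptyset$ the bound is vacuous, and if $\ell = n$ then $\mathcal{A}(n,\ell)$ is not among the arc families of Definition~\ref{def:1.2}, so we may assume $1 \leq \ell < n$ and $\mathcal{B} \neq \emptyset$.

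Next I would verify that the hypotheses of Proposition~\ref{prop:4.4} are met by setting $s$ copies $\mathcal{B}_1 = \mathcal{B}_2 = \cdots = \mathcal{B}_s = \mathcal{B}$ with $\ell_1 = \cdots = \ell_s = \ell$. The constraints $s \leq n$, $\ell_i < n$, and all $\mathcal{B}_i$ non-empty are immediate, while the crucial inequality $\ell_1 + \cdots + \ell_s = s\ell \geq n$ is precisely the assumption $n \leq s\ell$. The cross-union property for $(\mathcal{B},\ldots,\mathcal{B})$ — namely $B_1 \cup \cdots \cup B_s \neq [n]$ for every choice of $B_i \in \mathcal{B}$ — is by definition the $s$-wise union property of the single family $\mathcal{B}$.

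Applying \eqref{eq:4.2} then gives
\[
s\,|\mathcal{B}| \;=\; |\mathcal{B}_1| + \cdots + |\mathcal{B}_s| \;\leq\; (n-\ell_1)+\cdots+(n-\ell_s) \;=\; s(n-\ell),
\]
and dividing by $s$ yields \eqref{eq:4.4}. There is no genuine obstacle here — all the work was already done in Proposition~\ref{prop:4.4}; the only thing to be careful about is checking that the diagonal choice of families legitimately satisfies the cross-union hypothesis, which it does tautologically.
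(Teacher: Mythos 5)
Your proof is correct and follows exactly the paper's own approach: apply Proposition~\ref{prop:4.4} with $\mathcal B_1 = \cdots = \mathcal B_s = \mathcal B$ and $\ell_1 = \cdots = \ell_s = \ell$, then divide by $s$. Your extra care in disposing of the $\mathcal B = \emptyset$ case is a sensible addition, since Proposition~\ref{prop:4.4} formally requires the families to be non-empty.
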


\begin{proof}
Apply \eqref{eq:4.2} with $\mathcal B = \mathcal B_1 = \mathcal B_2 = \ldots = \mathcal B_s$,
$\ell = \ell_1 = \ldots = \ell_s$.
We obtain $s|\mathcal B| \leq s(n - \ell)$ which yields \eqref{eq:4.4}.
\end{proof}

Noting that $\mathcal B$ is $s$-wise union if and only if the family of complements $\mathcal B^c$ is $s$-wise intersecting we get also

\begin{corollary}
\label{cor:4.6}
Let $n > k \geq 1$, $s \geq 2$ be integers, $(s - 1)n \geq sk$.
Suppose that $\mathcal D \subset \mathcal A(n, k)$ is $s$-wise intersecting.
Then
\beq
\label{eq:4.5}
|\mathcal D| \leq k.
\eeq
\end{corollary}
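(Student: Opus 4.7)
The plan is simply to translate the $s$-wise intersection condition for $\mathcal D \subset \mathcal A(n,k)$ into an $s$-wise union condition for the family of complements and then invoke Corollary~\ref{cor:4.5} directly, exactly as hinted in the sentence preceding the statement.

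First, I would set $\mathcal B = \mathcal D^c = \{[n]\setminus D : D \in \mathcal D\}$ and observe that $\mathcal B \subset \mathcal A(n, n-k)$. Indeed, the complement of the $k$-arc $A_k(r) = (x_r, x_{r+1}, \dots, x_{r+k-1})$ is precisely the $(n-k)$-arc $A_{n-k}(r+k) = (x_{r+k}, x_{r+k+1}, \dots, x_{r+n-1})$, so complementation is a bijection between $\mathcal A(n,k)$ and $\mathcal A(n, n-k)$. Next I would verify, by De~Morgan, that for any $D_1, \dots, D_s \in \mathcal D$,
\[
D_1 \cap \dots \cap D_s = \emptyset \iff D_1^c \cup \dots \cup D_s^c = [n],
\]
so that $\mathcal D$ being $s$-wise intersecting is equivalent to $\mathcal B$ being $s$-wise union.

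Setting $\ell = n-k$, the hypotheses of Corollary~\ref{cor:4.5} are then immediate: $1 \le \ell < n$ because $1 \le k < n$, and the assumption $(s-1)n \geq sk$ rearranges to $n \leq s(n-k) = s\ell$, which is exactly the remaining condition in Corollary~\ref{cor:4.5}. Applying that corollary to $\mathcal B$ yields
\[
|\mathcal D| \;=\; |\mathcal B| \;\leq\; n - \ell \;=\; k,
\]
which is \eqref{eq:4.5}.

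There is essentially no obstacle here; the only mild subtlety is to make sure that complementation really does send $k$-arcs to $(n-k)$-arcs (as opposed to arbitrary $(n-k)$-subsets), which is what makes the reduction to Corollary~\ref{cor:4.5} legitimate rather than requiring a Kruskal--Katona-type argument on general families. If one wanted to be scrupulous about the marginal condition $n \geq s$ needed for Corollary~\ref{cor:4.5}, one could note that when $n < s$ the family $\mathcal D$ has at most $n < s$ members, so being $s$-wise intersecting forces all members of $\mathcal D$ to share a common element (take any tuple with repetitions), and in $\mathcal A(n,k)$ the number of $k$-arcs through a fixed element is exactly $k$; this settles the remaining case.
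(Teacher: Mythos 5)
Your proof is correct and follows the same route as the paper's: take complements, observe that complementation carries $\mathcal A(n,k)$ bijectively onto $\mathcal A(n,n-k)$ and turns $s$-wise intersecting into $s$-wise union, then apply Corollary \ref{cor:4.5} with $\ell = n-k$. You spell out the arithmetic $(s-1)n \geq sk \iff n \leq s(n-k)$ and the arc-complement computation, both of which the paper leaves implicit.

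One genuine improvement in your write-up: you noticed that Corollary \ref{cor:4.5} (via Proposition \ref{prop:4.4}) carries the hypothesis $n \geq s$, which is \emph{not} among the hypotheses of Corollary \ref{cor:4.6}, and you closed that gap by observing that when $n < s$ we have $|\mathcal D| \leq |\mathcal A(n,k)| = n < s$, so a repeated $s$-tuple running over all members of $\mathcal D$ shows $\bigcap_{D \in \mathcal D} D \neq \emptyset$, and the number of $k$-arcs through a fixed point is exactly $k$. The paper's one-line deduction silently assumes $n \geq s$; your version is the complete argument.
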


\begin{proof}
Indeed, setting $\mathcal B = \mathcal D^c$, $\ell = n - k$ we may apply Corollary \ref{cor:4.5}.
Then \eqref{eq:4.4} yields $|\mathcal D| = |\mathcal D^c| \leq n - \ell = k$.
\end{proof}

Now to prove Theorem \ref{th:4.2} is a simple application of Lemma \ref{lem:1.1}, it is the same argument as Katona's simple proof of the Erd\H{o}s--Ko--Rado The\-o\-rem.\hfill $\square$

\smallskip
Let me mention that the idea of considering $r$-wise union antichains was suggested by Zsolt Baranyai during a lecture at Katona's seminar.
Baranyai was one of the most talented of Katona's many students.
All of us were very shocked and sad when he died at the age of 29 due to an unfortunate car accident.
It was before he finished writing his PhD dissertation.
Katona took all the trouble and prepared the full text and succeeded in having the PhD title conferred to Baranyai.
To the best of my knowledge he is the only mathematician in Hungary having posthumously received this title.
It was a little consolation to Baranyai's mother who was absolutely crushed by the sudden death of her ingenious son.
To me it showed the goodness and warmth of heart of my adviser, Gyula Katona.

Let me state one of my early results inspired by Baranyai's question.

\setcounter{theorem}{6}
\begin{theorem}[\cite{F76}]
\label{th:4.7}
Let $n, s$ be positive integers, $s \geq 3$.
Suppose that $\mathcal F \subset 2^{[n]}$ is an $s$-wise intersecting antichain satisfying $(s - 1)n \geq s|F|$ for all $F \in \mathcal F$.
Then
\beq
\label{eq:4.6}
\sum_{F \in \mathcal F} \frac{1}{{n - 1\choose |F| - 1}} \leq 1.
\eeq
\end{theorem}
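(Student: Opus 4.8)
The plan is to combine Corollary~\ref{cor:4.6} with Katona's Averaging Argument, Lemma~\ref{lem:1.1}, essentially imitating the derivation of \eqref{eq:3.1} from \eqref{eq:3.2} but now allowing $\mathcal F$ to be non-uniform. The key observation is that the $s$-wise intersecting property descends to each uniform layer: writing $\mathcal F_k = \mathcal F \cap {[n]\choose k}$, each $\mathcal F_k$ is again $s$-wise intersecting, and the hypothesis $(s-1)n \ge s|F|$ guarantees that whenever $\mathcal F_k \ne \emptyset$ we have $(s-1)n \ge sk$, so Corollary~\ref{cor:4.6} applies to the relevant layers. First I would fix a cyclic permutation $\pi$ and set $\mathcal G = \mathcal A(n) = \mathcal A(n,1)\cup\ldots\cup\mathcal A(n,n-1)$, exactly as in the proof of the LYM inequality (Theorem~\ref{th:1.4}).

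The main computation runs as follows. For each $1\le k \le n-1$, the probability that a uniformly random $k$-arc $A_k(r)$ (over a random cyclic order) lands in $\mathcal F_k$ equals $|\mathcal F_k|\big/{n\choose k}$, and there are $n$ such arcs, so the expected value of $|\pi(\mathcal A(n,k)) \cap \mathcal F_k|$ is $n|\mathcal F_k|\big/{n\choose k}$. Summing over $k$, the expected value of $|\pi(\mathcal A(n)) \cap \mathcal F|$ is $n\sum_{F\in\mathcal F} 1\big/{n\choose |F|}$. On the other hand, for \emph{every} cyclic permutation the intersection $\pi(\mathcal A(n,k))\cap\mathcal F_k$ is an $s$-wise intersecting subfamily of $\mathcal A(n,k)$, hence has size at most $k$ by Corollary~\ref{cor:4.6} (when $\mathcal F_k\ne\emptyset$, so that $(s-1)n\ge sk$ holds). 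Therefore
$$
|\pi(\mathcal A(n))\cap\mathcal F| = \sum_{k} |\pi(\mathcal A(n,k))\cap\mathcal F_k| \le \sum_{k : \mathcal F_k\ne\emptyset} k.
$$
Since the expectation never exceeds the maximum, we get $n\sum_{F\in\mathcal F} 1\big/{n\choose |F|} \le \sum_{k:\mathcal F_k\ne\emptyset} k$, i.e. $\sum_{F\in\mathcal F} 1\big/{n\choose |F|} \le \frac1n\sum_{k:\mathcal F_k\ne\emptyset} k$. This is not yet \eqref{eq:4.6}; to finish I would rewrite $\frac{k}{n}\cdot\frac{1}{{n\choose k}} = \frac{1}{{n-1\choose k-1}}$ on the left, i.e. I should instead bound each layer separately before summing: for each $k$ with $\mathcal F_k\ne\emptyset$, averaging over cyclic orders with $\mathcal G=\mathcal A(n,k)$ and using Corollary~\ref{cor:4.6} gives $|\mathcal F_k|\big/{n\choose k}\le k/n$, hence $|\mathcal F_k|\le{n-1\choose k-1}$, and thus $\sum_{F\in\mathcal F_k} 1\big/{n-1\choose|F|-1} = |\mathcal F_k|\big/{n-1\choose k-1}\le 1$ — but summing these would only give the bound $|\{k:\mathcal F_k\ne\emptyset\}|$, which is too weak.

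The hard part will therefore be exploiting the antichain condition, which so far has not been used and which is exactly what forces the right-hand side of \eqref{eq:4.6} to be $1$ rather than a sum of $1$'s. The clean way is to feed the antichain structure directly into the Katona circle: if $\mathcal R = \mathcal F\cap\mathcal A(n)$ for a fixed cyclic order, then on the one hand $\mathcal R$ is an antichain of arcs, and on the other hand each uniform layer of $\mathcal R$ is $s$-wise intersecting. I would show that such an $\mathcal R$ satisfies $\sum_{R\in\mathcal R} 1\big/{n-1\choose|R|-1}\le 1$ as a \emph{circular} statement — this is the analogue of Lemma~\ref{lem:1.5} for the present setting, and is the real content of the theorem. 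Concretely, one shows $|\mathcal R|\le n-\lambda$ where $\lambda$ counts something like the number of distinct sizes appearing, or more precisely one argues that an $s$-wise intersecting antichain of arcs, after identifying heads, cannot be too large; combining the antichain constraint on heads (as in Lemma~\ref{lem:1.5}) with the intersecting constraint within each size class (as in Theorem~\ref{th:3.2}/Corollary~\ref{cor:4.6}) should yield the weighted bound $\sum_{R\in\mathcal R}\frac{|R|}{n}\cdot\frac{1}{{n\choose|R|}} \cdot \text{(something)} \le 1$. Once the circular inequality $\sum_{R\in\mathcal R}1\big/{n-1\choose|R|-1}\le 1$ is established for every cyclic order, Lemma~\ref{lem:1.1}-style averaging over cyclic orders — the probability of $F\in\mathcal A(n)$ being $n\big/{n\choose|F|}$, and the weight $1\big/{n-1\choose|F|-1} = \frac{|F|}{n}\cdot\frac{1}{{n\choose|F|}}$ being calibrated to match — transfers it to \eqref{eq:4.6}. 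Establishing that circular weighted inequality, i.e. correctly combining the head-injectivity of the antichain with the disjointness argument of Theorem~\ref{th:3.2} across different arc lengths, is the step I expect to require the most care.
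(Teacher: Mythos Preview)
Your outline correctly identifies that averaging over cyclic orders reduces \eqref{eq:4.6} to the pointwise inequality
\[
\sum_{R\in\mathcal R}\frac{1}{|R|}\le 1
\]
for every $s$-wise intersecting antichain $\mathcal R\subset\mathcal A(n)$ with arc-lengths at most $(s-1)n/s$. But you do not prove this circular inequality, and the vague plan of ``combining the head-injectivity of the antichain with the disjointness argument of Theorem~\ref{th:3.2} across different arc lengths'' is not a proof: neither ingredient alone yields a \emph{weighted} bound, and you supply no mechanism for merging them. This is not a detail to be filled in later --- it is the whole content of the theorem transported to the circle, and there is no indication that a short direct argument exists.

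The paper sidesteps this difficulty entirely. Rather than seek a weighted circular lemma, it uses Sperner's push-up operation: take the minimum level $\ell=\min\{|F|:F\in\mathcal F\}$, replace $\mathcal F^{(\ell)}$ by its immediate shade $\partial^+\mathcal F^{(\ell)}$, and observe that both the antichain and the $s$-wise intersecting properties survive. Iterating drives the whole family up to the single level $k=\lfloor(s-1)n/s\rfloor$, where Theorem~\ref{th:4.2} (equivalently Corollary~\ref{cor:4.6} plus averaging) gives $|\mathcal F^*|\le\binom{n-1}{k-1}$. The LYM-type sum \eqref{eq:4.6} is then extracted by tracking each push-up step through Sperner's shade inequality~\eqref{eq:2.8}. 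Thus the antichain hypothesis is exploited \emph{structurally} --- to keep the push-up process collision-free --- rather than through any head-counting on the circle. If your circular weighted inequality can be established directly it would be a genuinely different proof; as written, the proposal is a reformulation of the target rather than a route to it.
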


The case $s = 2$ was proved by Greene, Katona and Kleitman \cite{GKK}.
The proof is based on Corollary \ref{cor:4.6} and the following operation discovered by Sperner.

Let $\mathcal F$ be an antichain and $\ell = \min \{|F| : F \in \mathcal F\}$.
Set $\mathcal F^{(\ell)} = \mathcal F \cap {[n]\choose \ell}$.
If $\ell < n$ define $\mathcal F^* = \bigl(\mathcal F\setminus \mathcal F^{(\ell)}\bigr) \cup \partial^+ \mathcal F^{(\ell)}$.
Then $\mathcal F^*$ is an antichain too.

In our case the $s$-wise intersecting property is maintained as well.
Set $k = \left\lfloor\frac{(s - 1)n}{s}\right\rfloor$.
We keep repeating the operation until eventually $\mathcal F^* \subset {[n]\choose k}$.
By Corollary \ref{cor:4.6}, $|\mathcal F^*| \leq {n - 1\choose k - 1}$.

As to \eqref{eq:4.6} it follows by carefully using \eqref{eq:2.8}. \hfill $\square$

\section{The circular Hilton--Milner Theorem}
\label{sec:5}

So-called \emph{stability theorems} are very important and useful in various branches of mathematics.
They show that in certain situations excluding the optimal solutions one can get considerably better bounds.
The first stability theorem in extremal set theory is the Hilton--Milner Theorem.
Let us present it.

\begin{example}
\label{ex:5.1}
Let $n \geq 2k \geq 4$ be integers and define the Hilton--Milner family $\mathcal H(n,k)$.
First define $[a,b] = \{i \in \mathbb N: \, a \leq i \leq b\}$.
Now set
$$
\mathcal H(n, k) = \left\{H \in {[n]\choose k} : 1 \in H, \, H \cap [2, k + 1] \neq \emptyset\right\} \cup \{[2, k + 1]\}.
$$
\end{example}

It is easy to check that $\mathcal H(n, k)$ is intersecting and $|\mathcal H(2k, k)| = {2k - 1\choose k - 1}$.
For $n \geq 2k$ one has
$$
|\mathcal H(n, k)| = {n - 1\choose k - 1} - {n - k - 1\choose k - 1} + 1 < {n - 1\choose k - 1}.
$$
For $k$ fixed and $n \to \infty$:
\beq
\label{eq:5.1}
|\mathcal H(n, k)| = (k + o(1)) {n - 2\choose k - 2},
\eeq
i.e., it is much smaller than the bound ${n - 1\choose k - 1}$ from the Erd\H{o}s--Ko--Rado Theorem.
Recall that a family $\mathcal F$ satisfying $\bigcap\limits_{F \in \mathcal F} F \neq \emptyset$ is called a \emph{star}.

\setcounter{theorem}{1}
\begin{theorem}[Hilton and Milner \cite{HM}]
\label{th:5.2}
Suppose that $\mathcal H \subset {[n]\choose k}$ is intersecting and $\mathcal H$ is not a star.
Then
\beq
\label{eq:5.2}
|\mathcal H| \leq |\mathcal H(n, k)|.
\eeq
Moreover, for $n > 2k$ equality holds in \eqref{eq:5.2} only if $\mathcal H$ is isomorphic to $\mathcal H(n,k)$.
\end{theorem}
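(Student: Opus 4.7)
The plan is to mirror Katona's circle-method strategy but with a preprocessing step, since a direct application of Lemma~\ref{lem:1.1} with $\mathcal{G}=\mathcal{A}(n,k)$ cannot succeed: a short calculation shows that $\mathcal{H}(n,k)\cap\mathcal{A}(n,k)$ consists of exactly the $k$ arcs through the element $2$, so the circular intersection already matches the Erd\H{o}s--Ko--Rado value and the averaging recovers only \eqref{eq:3.1}. An element must be singled out and a decomposition introduced.

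First I would reduce to a shifted family. Iterating the shifts $S_{ij}$ yields $\mathcal{H}^*$ with $|\mathcal{H}^*|=|\mathcal{H}|$, still intersecting, and (as a short case analysis on how $S_{ij}$ could create a common element shows) still not a star. So I assume $\mathcal{H}$ is shifted. Then, starting from any $F\in\mathcal{H}$ with $1\notin F$, repeated shifting of its entries into the interval $[2,k+1]$ stays inside $\mathcal{H}$ and does not reintroduce $1$, hence $[2,k+1]\in\mathcal{H}$.

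Next, split $\mathcal{H}=\mathcal{H}_1\sqcup\mathcal{H}_0$ according to whether $1\in H$. Every $H\in\mathcal{H}_1$ must intersect $[2,k+1]\in\mathcal{H}_0$, which gives
$$|\mathcal{H}_1|\leq\binom{n-1}{k-1}-\binom{n-k-1}{k-1}.$$
The proof reduces to showing $\mathcal{H}_0=\{[2,k+1]\}$. Suppose for contradiction $F\in\mathcal{H}_0$ with $F\neq[2,k+1]$. The shifted property yields $(F\setminus\{j\})\cup\{1\}\in\mathcal{H}_1$ for every $j\in F$, and intersecting each of these with $[2,k+1]$ forces $|F\cap[2,k+1]|\geq 2$. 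To turn this into a full contradiction I would apply the circular cross-intersecting theorem (Theorem~\ref{th:3.4}) on the ground set $[2,n]$ to the cross-intersecting pair $\bigl(\mathcal{H}_0,\,\{H\setminus\{1\}:H\in\mathcal{H}_1\}\bigr)$ of uniformities $k$ and $k-1$, transferring to the circular setting via Katona's averaging argument as in Section~\ref{sec:3}.

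The main obstacle will be this last step: squeezing $|\mathcal{H}_0|=1$ out of the shifted and cross-intersecting data with exactly the ``$+1$'' required by the Hilton--Milner bound. The non-emptiness hypothesis in Theorem~\ref{th:3.4} plays a crucial role and must be checked delicately. The equality characterization for $n>2k$ then follows by tracing back through the chain: equality forces $|\mathcal{H}_1|$ to attain the displayed count and $\mathcal{H}_0=\{[2,k+1]\}$, which combined with shifted-ness pins $\mathcal{H}$ as isomorphic to $\mathcal{H}(n,k)$.
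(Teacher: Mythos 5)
The paper does not prove Theorem~\ref{th:5.2}. It states it as a classical result, cites several modern proofs, and then explicitly records --- referring to Katona \cite{K5} --- that the Katona Circle Method \emph{cannot} prove it, because for large $n$ the density $|\mathcal H(n,k)|\bigm/{n\choose k}$ falls below $1/n$, so a random circle carries on average fewer than one member of $\mathcal H(n,k)$ and the averaging inequality \eqref{eq:1.1} becomes vacuous. What the paper actually proves is the structurally different Circular Hilton--Milner Theorem (Theorem~\ref{th:5.4}), a statement about intersecting arc-families, which does not recover \eqref{eq:5.2}.

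Your decisive step --- forcing $|\mathcal H_0|\le 1$ via Theorem~\ref{th:3.4} applied to $\bigl(\mathcal H_0,\ \{H\setminus\{1\}:H\in\mathcal H_1\}\bigr)$ on the ground set $[2,n]$ and then averaging --- runs into precisely that obstruction. In the extremal shifted configuration $\mathcal H_0=\{[2,k+1]\}$, the probability that $[2,k+1]$ appears as an arc of a random cyclic ordering of $[2,n]$ is $(n-1)\bigm/{n-1\choose k}$, which vanishes as $n\to\infty$. On all other circles the non-emptiness hypothesis of Theorem~\ref{th:3.4} fails, one is left with only the trivial bound $|\mathcal C|\le n-1$, and the averaged output is no stronger than $|\mathcal H_1'|\le{n-1\choose k-1}$, which is useless. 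You flagged the non-emptiness ``to be checked delicately,'' but it is not a delicacy; it is fatal, and it is exactly the obstruction the paper's one-sentence warning is about. The usable replacement for this step is the genuine Kruskal--Katona cross-intersecting inequality, as in Frankl--F\"uredi \cite{FF1}, which the circle cannot emulate. (Separately, the assertion that iterated shifting leaves $\mathcal H$ ``still not a star'' by ``a short case analysis'' glosses over a real wrinkle: a shift $S_{1j}$ can collapse a non-star intersecting family to a star, and every shifting proof of Hilton--Milner handles this carefully --- by shifting only within $[2,n]$, by stopping one shift short and analyzing the pre-image, or by strengthening the statement to be shift-stable --- rather than waving it away.)
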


The original proof was long and complicated.
By now there are many simpler proofs: \cite{FF1}, \cite{KZ}, \cite{HK}, \cite{F19} to mention a few.

Unfortunately, as pointed out by Katona \cite{K5}, the Katona Circle Method is not sufficient to prove \eqref{eq:5.2}.
The reason is that for $n > n_0(k)$ one has $|\mathcal H(n, k)| \bigm/{n \choose k} < 1/n$.

Nevertheless, we find it worthwhile to prove the Circular Hilton--Milner Theorem.

\setcounter{example}{2}
\begin{example}
\label{ex:5.3}
Let $3(k - 1) \geq n \geq k$ and consider three points $x_1, x_p, x_q$ on Katona's Circle such that $p \leq k$, $q \leq p + k - 1$ and $q + k - 1 > n$.
(Since $n \leq 3(k - 1)$, $p = k$, $q = 2k - 1$ is a possible choice.)
\end{example}

Let us define $\mathcal M_1(n, k)$ as those arcs of length $k$ that contain both $x_1$ and $x_p$.
Similarly, $\mathcal M_2(n, k)$ consists of all the arcs of length $k$ that contain $x_p$ and $x_q$.
Finally, $\mathcal M_3(n, k)$ consists of the arcs of length $k$ containing both $x_q$ and $x_{1}$.
These definitions imply that $\mathcal M_{p, q} := \mathcal M_1(n, k) \cup \mathcal M_2(n, k) \cup \mathcal M_3(n, k)$ is intersecting.
Indeed, $\bigl|M \cap \{x_1, x_p, x_q\}\bigr| \geq 2$ for all $M \in \mathcal M_{p, q}$.
Also
$$
\bigl|\mathcal M_{p, q}\bigr| = 3k - n
$$
is not hard to check.

\setcounter{theorem}{3}
\begin{theorem}[Circular Hilton--Milner Theorem]
\label{th:5.4}
Suppose that $n \geq 2k > 1$.
Let $\mathcal M \subset \mathcal A(n, k)$ be intersecting and assume that $\mathcal M$ is not a star.
Then {\rm (i), (ii)} and {\rm (iii)} hold:

\hspace*{7pt}{\rm (i)} There exist $M_1, M_2, M_3 \in \mathcal M$ satisfying $M_1 \cap M_2 \cap M_3 = \emptyset$.

\hspace*{3.5pt}{\rm (ii)} $k \geq 2$, $n \leq 3(k - 1)$.

{\rm (iii)} $|\mathcal M| \leq 3k - n$.
\end{theorem}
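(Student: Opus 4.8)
My plan is to work throughout with arcs on Katona's circle, using repeatedly that the intersection of two arcs whose lengths sum to at most $n$ is again an arc (possibly empty). For part (i) I would fix any $M_1\in\mathcal M$. Since $\mathcal M$ is intersecting and $|M|+|M_1|=2k\le n$ for every $M\in\mathcal M$, each $M\cap M_1$ is a non-empty sub-arc of $M_1$. Viewing $M_1$ as a linearly ordered set of $k$ points, the sets $\{M\cap M_1:M\in\mathcal M\}$ become intervals of this line whose total intersection equals $M_1\cap\bigcap_{M\in\mathcal M}M=\emptyset$ (as $\mathcal M$ is not a star); by the one-dimensional Helly theorem two of them are already disjoint, giving $M_2,M_3\in\mathcal M$ with $M_1\cap M_2\cap M_3=\emptyset$.

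For part (ii): if $k=1$ then an intersecting family of singletons has at most one member, hence is a star, so $k\ge 2$. With $M_1,M_2,M_3$ from (i) --- three pairwise intersecting proper arcs with empty common intersection --- I would first show that they cover $[n]$: otherwise, cutting the circle at a point lying in none of them turns them into pairwise intersecting intervals of a line, which by one-dimensional Helly have a common point, contradicting $M_1\cap M_2\cap M_3=\emptyset$. Inclusion--exclusion then gives $n=|M_1\cup M_2\cup M_3|=3k-|M_1\cap M_2|-|M_1\cap M_3|-|M_2\cap M_3|$, and since each pairwise intersection is non-empty, $n\le 3k-3=3(k-1)$.

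Part (iii) is the substantive one. From the covering triple the circle decomposes into six consecutive arcs (the three pairwise overlaps $M_i\cap M_j$ interleaved with the three ``private'' arcs $M_i\setminus(M_j\cup M_\ell)$), and as in (ii) one has $|M_1\cap M_2|+|M_1\cap M_3|+|M_2\cap M_3|=3k-n$. The plan is to pick one point $y_{ij}$ from each overlap arc $M_i\cap M_j$, chosen at the extreme ends so that $\{y_{12},y_{13}\}$ is a head--tail pair of $M_1$ (and similarly the chosen points lie on the boundaries of $M_2$ and $M_3$), and to prove that every $M\in\mathcal M$ contains at least two of $y_{12},y_{13},y_{23}$; the decisive sub-observation is that a $k$-arc meeting some $M_i$ but avoiding both endpoints of $M_i$ would have to lie inside the $(k-2)$-element interior of $M_i$, which is impossible. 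Granting this, $\mathcal M$ embeds into the family of all $k$-arcs containing at least two of three fixed points whose three cyclic gaps each have fewer than $k$ elements, and a direct count of that family (exactly as in the computation of $|\mathcal M_{p,q}|$ for Example~\ref{ex:5.3}) equals $3k-n$. Equivalently, one may pass to the set $H$ of heads of the arcs of $\mathcal M$ and prove the combinatorial statement that a subset of $\mathbb Z_n$ with all pairwise cyclic distances $\le k-1$, all consecutive-gap distances $\le k-1$, and not contained in any window of $k$ consecutive residues, has at most $3k-n$ elements.

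The hard part will be exactly this last step. Using only that the members of $\mathcal M$ meet $M_1,M_2,M_3$ yields merely $|\mathcal M|\le 2k-1$; closing the gap to $3k-n$ forces one to use the mutual intersecting property of all of $\mathcal M$ --- either through a sufficiently careful choice of $y_{12},y_{13},y_{23}$ as heads and tails of members of $\mathcal M$, or through a delicate analysis of the partial sums of the gaps between consecutive heads, tracking how the long jump forced by the non-star hypothesis interacts with every distance constraint simultaneously.
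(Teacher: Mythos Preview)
Your arguments for (i) and (ii) are correct and in fact improve on the paper's: where the paper invokes two-dimensional Helly on circular caps, your reduction to intervals inside a fixed $M_1$ and one-dimensional Helly is more elementary and just as rigorous. Part (ii) is essentially the paper's argument, with your ``cut at a missing point'' justification of $M_1\cup M_2\cup M_3=[n]$ a welcome explicit step.

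The gap is in (iii). Your plan is to pin down three points $y_{12},y_{13},y_{23}$ (one in each overlap $M_i\cap M_j$) so that every $M\in\mathcal M$ contains at least two of them. But as you yourself note, the choice ``head and tail of $M_1$'' need not coincide with head/tail of $M_2$ or $M_3$, and your sub-observation only guarantees that each $M$ contains the head or tail of each $M_i$ --- six points in three overlap-pairs, not three points. Concretely, take $n=12$, $k=6$, $M_1=\{1,\dots,6\}$, $M_2=\{5,\dots,10\}$, $M_3=\{9,\dots,12,1,2\}$ and $M_4=\{12,1,2,3,4,5\}$: the family $\{M_1,M_2,M_3,M_4\}$ is intersecting and non-star, yet with your choice $y_{13}=1$, $y_{12}=6$ the arc $M_4$ meets only $y_{13}$. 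A different choice ($y_{12}=5$) does work here, but you give no rule that provably succeeds in general, and the alternative ``partial sums of gaps'' route remains a sketch.

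The paper sidesteps this by reverse induction on $k$. The base cases are $k=\lfloor n/2\rfloor$, handled by \eqref{eq:3.2} together with the uniqueness clause following the proof of \eqref{eq:3.3}. For the inductive step the key observation --- which your proposal lacks --- is that $M_1,M_2,M_3$ lie in \emph{three distinct connected components} of the graph $G(\mathcal M)$: if, say, $M_1$ and $M_2$ were in the same component, then every $k$-arc between them would lie in $\mathcal M$, and one of these is disjoint from $M_3$. Hence $\lambda(\mathcal M)\ge 3$, so Proposition~\ref{prop:2.4} gives $|\partial^+\mathcal M|\ge|\mathcal M|+3$; since $\partial^+\mathcal M$ is again intersecting and non-star, the induction hypothesis yields $|\mathcal M|+3\le 3(k+1)-n$, which is exactly (iii).
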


\begin{proof}
To prove (i) we are going to use Helly's Theorem (cf.\ \cite{PA}).
In the case of the plane it says that if $C_1, \ldots, C_m$ are closed convex sets such that $C_1 \cap \ldots \cap C_m = \emptyset$ then there exist $1 \leq i_1 < i_2 < i_3 \leq m$ so that $C_{i_1} \cap C_{i_2} \cap C_{i_3} = \emptyset$.

To associate closed convex sets with the members of $\mathcal M$ let us arrange $x_1, \ldots, x_n$ on a circle and assume that they form a regular $n$-gon in this order.
To an arc $A$ of length $k$ with head $x_u$ and tail $x_v$ we associate the circular cap $C(A)$ bounded by the circular arc joining $x_u$ and $x_v$ and the segment $x_ux_v$.
Let $O$ be the centre of the circle.
Since $k \leq n / 2$, $C(A)$ does not contain $O$.
The following easy facts are important to note.
If $P \in C(A)$ then the intersection (say $R$) of the halfline $OP$ and the circle is contained in $C(A)$.
If $R$ is not a vertex of the regular $n$-gon then it is on an arc joining two neighbouring vertices, both of which are contained in $C(A)$.

Now we are ready to prove (i).
Since $\mathcal M$ is not a star, $\bigcap\limits_{M \in \mathcal M} M = \emptyset$.
We claim that
$$
\bigcap\limits_{M \in \mathcal M} C(M) = \emptyset.
$$
Indeed, otherwise we can fix a point $P$ in the intersection.
By the above geometrical facts we can find a vertex $x_w$ in the intersection as well.
Then $x_w \in M$ for all $M \in \mathcal M$, a contradiction.

Now Helly's Theorem implies the existence of $M_1, M_2, M_3 \in \mathcal M$ satisfying $C(M_1) \cap C(M_2) \cap C(M_3) = \emptyset$.
Since $M_i \subset C(M_i)$, $i = 1,2,3$, $M_1 \cap M_2 \cap M_3 = \emptyset$ follows.

To prove (ii) is easy.
First observe that $M_i \cap M_j \neq \emptyset$, $1 \leq i < j \leq 3$ and $M_1 \cap M_2 \cap M_3 = \emptyset$ imply $M_1 \cup M_2 \cup M_3 = \{x_1, \ldots, x_n\}$.
Computing the size of this union via inclusion-exclusion yields
$$
n = |M_1 \cup M_2 \cup M_3| = \sum_{1 \leq i \leq 3} |M_i| - \sum_{1 \leq i < j \leq 3} |M_i \cap M_j| \leq 3k - 3, \ \text{ proving (ii)}.
$$
To prove (iii) we apply reverse induction on $k$.
If $k = \frac{n}{2}$ then $3k - n = \frac{n}{2} = k$ and (iii) follows from \eqref{eq:3.2}.
If $n$ is odd and $k = \frac{n - 1}{2}$ then $3k - n = k - 1$.
From \eqref{eq:3.2} we only get the upper bound~$k$.
However, from the \emph{proof} of \eqref{eq:3.3} (cf. the remark after the proof) we infer that $|\mathcal M| = k$ can hold only if the intersecting family $\mathcal M$ is a star.
Since this is not the case, $|\mathcal M| \leq k - 1$ follows.

Now we assume $k < \left\lfloor \frac{n}{2}\right\rfloor$ and that (iii) holds for $k + 1$.
Let $M_1, M_2, M_3 \in \mathcal M$ be three members of $\mathcal M$ with $M_1 \cap M_2 \cap M_3$.
We claim that they are in distinct connected components of the graph $G(\mathcal M)$.

Suppose for contradiction, e.g., that $M_1$ and $M_2$ are in the same component.
This means that all $k$-arcs ``between'' them are in $\mathcal M$ too.
Formally, let $M_1 = A_k(x_{t_1})$, $M_2 = A_k(x_{t_2})$, $M_3 = A_k(x_{t_3})$ with $t_1, t_3, t_3$ in counterclockwise order.
Since $\mathcal M$ is intersecting, the tail $x_{t_2 + k - 1} \in M_3$ and the tail $x_{t_3 + k - 1} \in \mathcal M_1$.
Moving from $x_{t_1}$ toward $x_{t_2}$ one by one, there will be $A_k(x_{t_3 + k})$ on the route.
However, $M_3 \cap A_k(x_{t_3 + k}) = \emptyset$, the desired contradiction.

Thus we proved $\lambda(\mathcal M) \geq 3$.
By \eqref{eq:2.6}, $|\partial^+ \mathcal M| \geq |\mathcal M| + 3$ follows.
Obviously $\partial^+\mathcal M$ is intersecting and $\bigcap\limits_{M \in \partial^+ \mathcal M} M = \emptyset$.
By the induction hypothesis
$$
\hspace*{10mm}|\mathcal M| + 3 \leq |\partial^+ \mathcal M| \leq 3(k + 1) - n = 3k - n + 3, \ \text{ proving (iii).} \hspace*{10mm}\qedhere
$$
\end{proof}

\section{Excluded subposets}
\label{sec:6}

Let us first state an important result of Erd\H{o}s, the first generalization of Sperner Theorem.

\begin{definition}
\label{def:6.1}
The $\ell + 1$ distinct sets $R_0, R_1, \ldots, R_\ell$ are called a \emph{chain of length $\ell$} if $R_0 \subset R_1 \subset \ldots \subset R_\ell$.
\end{definition}

\setcounter{example}{1}
\begin{example}
\label{ex:6.2}
Let $n \geq \ell > 0$ and let $0 \leq k_1 < \ldots < k_\ell \leq n$.
Define the Erd\H{o}s family $\mathcal E(k_1, \ldots, k_\ell) = {[n]\choose k_1} \cup \ldots \cup {[n]\choose k_\ell}$.
\end{example}

Obviously, $\mathcal E(k_1, \ldots, k_\ell)$ never contains a chain of length $\ell$.

\setcounter{theorem}{2}
\begin{theorem}[Erd\H{o}s \cite{E1}]
\label{th:6.3}
Let $n \geq \ell > 0$ be integers and suppose that $\mathcal F \subset {[n]\choose k}$ contains no chain of length $\ell$.
Then
\beq
\label{eq:6.1}
|\mathcal F| \leq \max_{0 \leq k_1 < \ldots < k_\ell \leq n} \bigl|\mathcal E(k_1, k_2, \ldots, k_\ell)\bigr|.
\eeq
\end{theorem}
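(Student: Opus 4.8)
The plan is to go through the classical symmetric chain decomposition of the Boolean lattice; Katona's Circle by itself does not suffice here, for the same kind of reason it failed for Hilton--Milner in Section~\ref{sec:5}. Indeed, if $\mathcal F$ has no chain of length $\ell$ then neither does $\mathcal F\cap\mathcal A(n)$, and since the arcs sharing a fixed head form a single chain, at most $\ell$ of them lie in $\mathcal F\cap\mathcal A(n)$; averaging over cyclic permutations as in the proof of Theorem~\ref{th:1.4} then gives only the LYM-type bound $\sum_{F\in\mathcal F}1\big/\binom{n}{|F|}\le\ell$, hence $|\mathcal F|\le\ell\binom{n}{\lfloor n/2\rfloor}$, which is weaker than \eqref{eq:6.1} as soon as $\ell\ge2$ (the right-hand side of \eqref{eq:6.1} is a sum of the $\ell$ \emph{largest} binomial coefficients, not $\ell$ copies of the largest one). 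So something more is needed. I also read the hypothesis as $\mathcal F\subset 2^{[n]}$, since a uniform family trivially contains no chain of positive length.

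I would invoke the de Bruijn--Tengbergen--Kruyswijk theorem: $2^{[n]}$ is a disjoint union of $N:=\binom{n}{\lfloor n/2\rfloor}$ \emph{symmetric chains} $C_1,\dots,C_N$, where each $C_i$ contains exactly one set of each size $m_i,m_i+1,\dots,n-m_i$; in particular $|C_i|=n-2m_i+1$ and the set of sizes occurring in $C_i$ is an interval of integers symmetric about $n/2$. (If a citation is not wanted one recalls the short inductive construction, splitting $2^{[n]}$ according to whether a set contains the element $n$.) Next I would fix an optimal choice in \eqref{eq:6.1}, taking $\mathcal E=\mathcal E(k_1,\dots,k_\ell)$ with $\{k_1,\dots,k_\ell\}$ a block of $\ell$ consecutive integers placed as symmetrically about $n/2$ as possible; by the unimodality and symmetry of the binomial coefficients such a block attains the maximum. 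The point of this particular optimum is the following comparability property: for every symmetric chain $C_i$, either $\{k_1,\dots,k_\ell\}\subseteq\{m_i,\dots,n-m_i\}$ (precisely when $|C_i|\ge\ell$) or else $\{m_i,\dots,n-m_i\}\subseteq\{k_1,\dots,k_\ell\}$ (when $|C_i|<\ell$), simply because two integer intervals both (nearly) centred at $n/2$ are nested. Hence $|C_i\cap\mathcal E|=\min\{\ell,|C_i|\}$ for every $i$.

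Finally I would bound $\mathcal F$ chain by chain. Since $C_i$ is a chain and $\mathcal F$ has no chain of length $\ell$, i.e.\ no $\ell+1$ pairwise comparable members, we get $|\mathcal F\cap C_i|\le\ell$; trivially $|\mathcal F\cap C_i|\le|C_i|$ as well, so $|\mathcal F\cap C_i|\le\min\{\ell,|C_i|\}=|C_i\cap\mathcal E|$. As the $C_i$ partition $2^{[n]}$,
\[
|\mathcal F|=\sum_{i=1}^{N}|\mathcal F\cap C_i|\le\sum_{i=1}^{N}|C_i\cap\mathcal E|=|\mathcal E|=\max_{0\le k_1<\dots<k_\ell\le n}\bigl|\mathcal E(k_1,\dots,k_\ell)\bigr|,
\]
which is exactly \eqref{eq:6.1}. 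The one genuinely fussy step is the placement of the block $\{k_1,\dots,k_\ell\}$: depending on the parities of $n$ and $\ell$ one takes the central block, or one of the two equally good off-centre blocks, and must then check that this block is simultaneously size-maximizing and nested with the size-set of every symmetric chain. This case analysis is routine, and everything else is immediate once the symmetric chain decomposition is in hand.
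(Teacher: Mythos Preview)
Your argument via the de Bruijn--Tengbergen--Kruyswijk symmetric chain decomposition is correct, and your reading of the hypothesis as $\mathcal F\subset 2^{[n]}$ is the intended one. The nesting claim $|C_i\cap\mathcal E|=\min\{\ell,|C_i|\}$ does require the small parity case-check you flag, but it goes through.

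As for comparison: the paper does not actually supply a proof of Theorem~\ref{th:6.3}; it merely states Erd\H{o}s's result and then proves the circular analogue, Proposition~\ref{prop:6.4}, that $|\mathcal B|\le\ell n$ for $\mathcal B\subset\mathcal A(n)$ with no chain of length~$\ell$. You correctly observe that averaging this over permutations yields only the LYM-type inequality $\sum_{F\in\mathcal F}1/\binom{n}{|F|}\le\ell$, hence $|\mathcal F|\le\ell\binom{n}{\lfloor n/2\rfloor}$, which is strictly weaker than \eqref{eq:6.1} for $\ell\ge2$. So your symmetric-chain argument genuinely supplies what the circle method cannot here: the circle sees each level $\binom{[n]}{k}$ with the same weight~$n$, whereas the symmetric chain decomposition automatically respects the varying level sizes and lets the short chains contribute their full length rather than a uniform~$\ell$.
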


It is easy to see that the maximum of the RHS is attained when $k_1, \ldots, k_\ell$ form an interval with $\frac{n}{2}$ in its centre.

\setcounter{proposition}{3}
\begin{proposition}
\label{prop:6.4}
Let $n \geq \ell > 0$ and suppose that $\mathcal B \subset \mathcal A(n)$ contains no chain of length~$\ell$.
Then
\beq
\label{eq:6.2}
|\mathcal B| \leq \ell n.
\eeq
\end{proposition}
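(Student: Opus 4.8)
The plan is to mimic the head-counting idea behind Lemma \ref{lem:1.5}, but now bundling the arcs into full chains rather than merely counting distinct heads. For each $i$ with $1 \le i \le n$, let $\mathcal C_i \subset \mathcal A(n)$ be the family of all arcs whose head is $x_i$; it contains exactly one arc of each length $j$ with $1 \le j \le n-1$, and since the arc of length $j$ with head $x_i$ is properly contained in the arc of length $j+1$ with the same head, $\mathcal C_i$ is a chain of $n-1$ sets. Because every arc in $\mathcal A(n)$ has a unique head, $\mathcal A(n) = \mathcal C_1 \cup \cdots \cup \mathcal C_n$ is a partition of $\mathcal A(n)$ into $n$ chains, each of size $n-1$.

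Next I would invoke the hypothesis locally. Any $\ell+1$ distinct sets lying in a single $\mathcal C_i$ are pairwise comparable and hence form a chain of length $\ell$, which $\mathcal B$ is forbidden to contain; therefore $|\mathcal B \cap \mathcal C_i| \le \ell$ for every $i$. Summing over the partition then finishes the argument: $|\mathcal B| = \sum_{i=1}^n |\mathcal B \cap \mathcal C_i| \le \ell n$, which is \eqref{eq:6.2}. Taking $\ell = 1$ recovers the Circular Sperner bound of Lemma \ref{lem:1.5}, and the family $\mathcal A(n,k_1) \cup \cdots \cup \mathcal A(n,k_\ell)$ (with $k_1 < \cdots < k_\ell$ and $0 < k_i < n$) shows that \eqref{eq:6.2} is tight.

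There is no genuine obstacle here: the only things to verify are that the $\mathcal C_i$ really are chains and really do partition $\mathcal A(n)$, both immediate from Definition \ref{def:1.2}. The one point worth flagging is the intended sequel — \eqref{eq:6.2} is presumably meant to be fed into Lemma \ref{lem:1.1}, averaging $|\mathcal B \cap \mathcal A(n)|$ over all cyclic orders of $[n]$, to deduce Erd\H os's Theorem \ref{th:6.3}; that deduction will then require comparing the sum of the $\ell$ largest binomial coefficients with the right-hand side of \eqref{eq:6.1}, exactly as Sperner's Theorem \ref{th:1.6} was obtained from the LYM inequality.
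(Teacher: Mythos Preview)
Your proof is correct and is essentially identical to the paper's: both partition $\mathcal A(n)$ into the $n$ chains $\mathcal C_i$ of arcs with head $x_i$, observe that $|\mathcal B\cap\mathcal C_i|\le\ell$ since $\mathcal B$ contains no chain of length $\ell$, and sum. The paper's write-up is even terser than yours, but the idea is the same head-counting argument.
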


\begin{proof}
Consider the head $H(B)$ for the arcs $B \in \mathcal B$.
Since arcs sharing the same head form a chain, there are at most $\ell$ arcs $B \in \mathcal B$ satisfying $H(B) = x_i$ for each $1 \leq i \leq n$.
\end{proof}

Instead of chains one can consider excluding other configurations defined by inclusions.
Katona initiated this research and made several fundamental contributions.
We refer the interested reader to the excellent book \cite{GP} written by two of his students.

As an illustration we present a nice exact result due to Katona et al.

\setcounter{definition}{4}
\begin{definition}
\label{def:6.5}
Four distinct sets $E, F, G, H$ are said to form a butterfly if both $E$ and $F$ are contained in both $G$ and $H$.
(Equivalently, $E \cup F \subset G \cap H$.)
\end{definition}

Note that we do not exclude $E \subset F$ or $G \subset H$.
Therefore a chain of length $3$ is a butterfly, too.

It is easy to see that the Erd\H{o}s family $\mathcal E(k, k + 1)$ contains no butterfly.

A nice thing about butterflies is that a family $\mathcal F$ contains a butterfly if and only if $\mathcal F^c = \{[n] \setminus F : F \in \mathcal F\}$ contains a butterfly.

From now on we consider families $\mathcal F \subset 2^{[n]}$ which are butterfly-free (contain no butterfly).

\setcounter{claim}{5}
\begin{claim}
\label{cl:6.6}
If\/ $\emptyset, [n] \in \mathcal F$ then $\mathcal F \setminus \{\emptyset, [n]\}$ is an antichain.\hfill $\square$
\end{claim}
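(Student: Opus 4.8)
The plan is to argue by contradiction. Suppose $\mathcal F$ is butterfly-free with $\emptyset, [n] \in \mathcal F$, yet $\mathcal F \setminus \{\emptyset, [n]\}$ is not an antichain. Then, by Definition \ref{def:1.3}, there are distinct sets $A, B \in \mathcal F \setminus \{\emptyset, [n]\}$ with $A \subset B$; since $A \neq B$, in fact $A \subsetneq B$. The first step is thus simply to unpack what it means for $\mathcal F \setminus \{\emptyset, [n]\}$ to fail the antichain property.

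Next I would observe that, because $A \neq \emptyset$ and $B \neq [n]$, the four sets $\emptyset, A, B, [n]$ satisfy $\emptyset \subsetneq A \subsetneq B \subsetneq [n]$ and hence are pairwise distinct. (Here we may assume $n \geq 1$; for $n = 0$ one has $2^{[n]} = \{\emptyset\}$, so $\mathcal F \setminus \{\emptyset,[n]\}$ is empty and the claim is vacuous.) This configuration is a chain of length $3$ in the sense of Definition \ref{def:6.1}, hence a butterfly by the remark following Definition \ref{def:6.5}. Alternatively, one checks directly against Definition \ref{def:6.5} that $E = \emptyset$, $F = A$, $G = B$, $H = [n]$ form a butterfly: the defining condition $E \cup F \subset G \cap H$ reduces to $A \subset B$, which holds by assumption, and the four sets are distinct as just noted.

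Either way we exhibit a butterfly inside $\mathcal F$, contradicting the hypothesis that $\mathcal F$ is butterfly-free; this completes the argument. I do not expect any genuine obstacle here — the proof is essentially a one-line observation. The only point deserving a moment's attention is the verification that $\emptyset, A, B, [n]$ are truly four distinct sets, which is precisely where the assumptions $A, B \notin \{\emptyset,[n]\}$ and $A \neq B$ get used, together with the (harmless) convention that $n \geq 1$.
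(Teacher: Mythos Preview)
Your proof is correct and is exactly the intended argument: the paper marks the claim with a bare $\square$ because it follows immediately from the observation (made right after Definition~\ref{def:6.5}) that a chain of length~$3$ is a butterfly, so any strict containment $A \subsetneq B$ in $\mathcal F \setminus \{\emptyset,[n]\}$ yields the forbidden chain $\emptyset \subsetneq A \subsetneq B \subsetneq [n]$.
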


For such families $|\mathcal F| \leq 2 + {n\choose \lceil n/2\rceil} < {n\choose \lceil n/2\rceil - 1} + {n\choose \lceil n/2\rceil}$ for $n \geq 3$.
Therefore we assume also that not both $\emptyset$ and $[n]$ are in.

Suppose next that $\emptyset \in \mathcal F$ but $[n] \notin \mathcal F$ (note that in this case $\emptyset \notin \mathcal F^c$ but $[n] \in \mathcal F^c$).
The following argument is due to D. Gerbner.
If the singleton $\{i\}$, $i \in [n]$ is missing from $\mathcal F$ then the family
$(\mathcal F \setminus \{\emptyset\}) \cup \bigl\{\{i\}\bigr\}$ is of the same size and butterfly-free.
If ${[n]\choose 1} \subset \mathcal F$ then $\widetilde{\mathcal F} := \mathcal F \setminus \left({[n]\choose 1} \cup \{\emptyset\}\right)$ consists of pairwise disjoint sets of size at least~$2$.
Indeed, if $G, H \in \widetilde {\mathcal F}$ and $i \in G\cap H$ then $\emptyset, \{i\}, G$ and $H$ form a butterfly.
Thus $|\widetilde{\mathcal F} | \leq \frac{n}{2}$ and $|\mathcal F| \leq 1 + \frac32 n$.
Consequently we may assume that neither $\emptyset$ nor $[n]$ is in~$\mathcal F$.

\setcounter{theorem}{6}
\begin{theorem}[De Bonis, Katona and Swanepoel \cite{BKS}]
\label{th:6.7}
Suppose that $\mathcal F \subset 2^{[n]}$ is butterfly-free with $1 \leq |F| < n$ for all $F \in \mathcal F$.
Then
\beq
\label{eq:6.3}
\sum_{F \in \mathcal F} 1\Bigm/ {n\choose |F|} \leq 2.
\eeq
\end{theorem}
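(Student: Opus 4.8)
The plan is to follow exactly the template by which Theorem~\ref{th:1.4} was deduced from its circular version: prove a \emph{circular butterfly-free theorem} and then average via Lemma~\ref{lem:1.1}. Concretely, I would show that for every cyclic permutation $(x_1,\dots,x_n)$, if $\mathcal B\subseteq\mathcal A(n)$ is butterfly-free then $|\mathcal B|\le 2n$. Granting this, since any subfamily of a butterfly-free family is butterfly-free, $\mathcal F\cap\mathcal A(n)$ is a butterfly-free subfamily of $\mathcal A(n)$ for every cyclic order, so $|\mathcal F\cap\mathcal A(n)|\le 2n$ always holds. As in the proof of Theorem~\ref{th:1.4}, a fixed $F\in\mathcal F$ becomes an arc with probability $|\mathcal A(n,|F|)|/\binom{n}{|F|}=n/\binom{n}{|F|}$, hence the expectation of $|\mathcal F\cap\mathcal A(n)|$ equals $n\sum_{F\in\mathcal F}1/\binom{n}{|F|}$; being at most $2n$, this yields \eqref{eq:6.3}.

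To prove the circular bound I would partition $\mathcal B$ according to heads. Arcs with a common head are nested, so they form a chain, and a chain of four sets is a butterfly (Definition~\ref{def:6.5}); hence every head carries at most three arcs. Call a head \emph{heavy} if it carries three arcs $A_p(i)\subset A_q(i)\subset A_r(i)$, and set $M_i:=A_q(i)$. The key sub-claim is that $M_i$ is incomparable to every arc of $\mathcal B$ outside $\{A_p(i),A_q(i),A_r(i)\}$: if $X\in\mathcal B$ is distinct from these three and $M_i\subseteq X$, then $A_p(i),A_q(i),A_r(i),X$ form a butterfly because $A_p(i)\cup A_q(i)=M_i\subseteq A_r(i)\cap X$; and if $X\subseteq M_i$, then $X,A_p(i),A_q(i),A_r(i)$ form a butterfly because $X\cup A_p(i)\subseteq A_q(i)\cap A_r(i)=M_i$. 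Since two arcs sharing a tail are comparable, this forces the tail of $M_i$ to be carried by no arc of $\mathcal B$ other than $M_i$; and since distinct heavy heads yield middle arcs with distinct heads (hence, being pairwise incomparable, with distinct tails), the assignment ``heavy head $\mapsto$ tail of its middle arc'' injects the heavy heads into the set of tails carried by exactly one arc. Symmetrically, swapping the roles of head and tail, the heavy tails inject into the heads carried by exactly one arc.

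The proof then closes by bookkeeping. Writing $h_3,h_1,h_0$ for the numbers of heads carrying $3,1,0$ arcs (so $h_2=n-h_3-h_1-h_0$ carry two), one gets $|\mathcal B|=3h_3+2h_2+h_1=2n+(h_3-h_1-2h_0)$, and likewise $|\mathcal B|=2n+(t_3-t_1-2t_0)$ from the tail side. If $|\mathcal B|>2n$ then $h_3\ge h_1+1$ and $t_3\ge t_1+1$; combined with the two injections, $h_3\le t_1$ and $t_3\le h_1$, this gives $t_1\ge h_1+1$ and $h_1\ge t_1+1$, a contradiction. Hence $|\mathcal B|\le 2n$.

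I expect the main obstacle to be precisely the sub-claim together with the realization that it must be exploited \emph{twice}, once through heads and once through tails: a naive head-only count gives at most three arcs per head and only the weaker bound $|\mathcal B|\le 3n$ (yielding the useless $\sum 1/\binom{n}{|F|}\le 3$), so the whole point is to convert the rigidity of a size-three head-chain into a ``light'' tail and then play the head and tail statistics against each other. A minor technical care is also needed in handling arc indices modulo $n$, so that arcs with the same head but distinct lengths genuinely have distinct tails.
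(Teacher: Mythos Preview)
Your proof is correct, and the overall architecture---reduce to a circular bound $|\mathcal B|\le 2n$ for butterfly-free $\mathcal B\subset\mathcal A(n)$ and then average exactly as in the passage from Lemma~\ref{lem:1.5} to Theorem~\ref{th:1.4}---is precisely what the paper does (this is Theorem~\ref{th:6.8} there). Where you diverge is in the proof of the circular bound itself.

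The paper's argument is structural: it splits $\mathcal B$ into the minimal sets $\mathcal P$, the maximal sets $\mathcal R$, and the rest $\mathcal Q$; butterfly-freeness forces each $Q\in\mathcal Q$ to have a \emph{unique} $P(Q)\in\mathcal P$ below it and a unique $R(Q)\in\mathcal R$ above it, and all three families are antichains. From ``$Q$ lies below at most one $R$'' one gets that for each $Q$ either $H(Q)\notin\{H(R):R\in\mathcal R\}$ or $T(Q)\notin\{T(R):R\in\mathcal R\}$; a pigeonhole on heads (and the symmetric tail argument via complements) then yields $|\mathcal R|+|\mathcal Q|/2\le n$ and $|\mathcal P|+|\mathcal Q|/2\le n$, which sum to $2n$.

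Your argument is a direct head/tail count: at most three arcs per head, and for each ``heavy'' head the middle arc is forced to be incomparable with everything else in $\mathcal B$, hence owns its tail privately. This gives the two injections $h_3\le t_1$ and $t_3\le h_1$, which together with $|\mathcal B|=2n+(h_3-h_1-2h_0)=2n+(t_3-t_1-2t_0)$ kill $|\mathcal B|>2n$. The key rigidity sub-claim (incomparability of the middle arc) plays the same role as the paper's uniqueness of $P(Q),R(Q)$, and your head/tail symmetry mirrors the paper's complementation symmetry between $\mathcal P$ and $\mathcal R$. Your route is slightly more bookkeeping-heavy but avoids introducing the $\mathcal P/\mathcal Q/\mathcal R$ decomposition; the paper's route makes the ``at most one above/below'' structure explicit and gets the two inequalities $|\mathcal P|+\tfrac12|\mathcal Q|\le n$, $|\mathcal R|+\tfrac12|\mathcal Q|\le n$ as standalone facts, which is arguably cleaner if one later wants the equality analysis.
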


To prove \eqref{eq:6.3} the authors show the circular version.

\begin{theorem}[\cite{BKS}]
\label{th:6.8}
Suppose that $\mathcal B \subset \mathcal A(n)$ is butterfly-free.
Then
\beq
\label{eq:6.4}
|\mathcal B| \leq 2n.
\eeq
\end{theorem}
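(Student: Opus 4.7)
The plan is to carry out a variant of the circular LYM argument: bound $|\mathcal{B}|$ by partitioning its elements among the $n$ positions of Katona's circle with at most two per position.

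For a fixed head position $x_r$, the arcs in $\mathcal{B}$ with head $x_r$ form a chain $A_1 \subsetneq A_2 \subsetneq \cdots \subsetneq A_{a_r}$ (arcs sharing a head are nested by length). If $a_r \geq 4$, then $A_1 \subsetneq A_2 \subsetneq A_3 \subsetneq A_4$ is a chain of length three, which is a butterfly by the remark after Definition \ref{def:6.5}; hence $a_r \leq 3$. The same argument applied to tails yields the analogous bound $b_r \leq 3$. At this stage only the weak bound $|\mathcal{B}| = \sum_r a_r \leq 3n$ has been obtained.

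The sharpening to $2n$ should come from the stronger restriction available when $a_r = 3$: applying butterfly-freeness to the pair $\{A_1, A_2\}$ (both contained in any common super-arc, i.e.\ any arc containing $A_2$) forces $A_3$ to be the unique arc of $\mathcal{B} \setminus \{A_1, A_2\}$ that contains $A_2$. Translating into arcs, no arc in $\mathcal{B}$ whose head is $x_{r-j}$ for some $j \geq 1$ and whose length is at least $|A_2|+j$ can lie in $\mathcal{B}$, and a symmetric statement holds on the tail side of $A_2$. My goal is then to use these forbidden-length windows to charge the excess third arc at each crowded head to a distinct "free slot" among the $2n$ pairs $(r,1),(r,2)$, producing an injection of $\mathcal{B}$ into a set of size $2n$.

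The principal obstacle is making this charging collision-free, since several crowded heads could compete for the same nearby free slot, and overflows could cascade around the cycle. The route I would follow is induction on $|\mathcal{B}|$: as long as some head satisfies $a_r = 3$, delete its largest arc $A_3$. The resulting family is still butterfly-free (a subfamily), strictly smaller, and by the previous paragraph the deletion frees a slot in the cyclic neighborhood of $r$ where no new arcs of $\mathcal{B}$ can exist. Iterating reduces to the case where $a_{r} \leq 2$ for every $r$, in which $|\mathcal{B}| = \sum_r a_r \leq 2n$ is immediate. The technical crux I expect is verifying that the deletion step is genuinely accounted for by the circular geometry — i.e., that the total number of deletions is dominated by the number of newly free positions, so that the final bound really is $2n$ rather than $3n$. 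An alternative phrasing would be a compression lemma replacing $A_3$ by a shorter arc of a different head while preserving both $|\mathcal{B}|$ and the butterfly-free property, reducing the problem to a two-layer family $\mathcal{A}(n,k)\cup\mathcal{A}(n,k+1)$ for which $|\mathcal{B}| \leq 2n$ is obvious.
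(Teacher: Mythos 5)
Your first step — at most three arcs per head, hence $|\mathcal B|\le 3n$ — is correct and elementary, and the observation that when $a_r=3$ the top arc $A_3$ is the \emph{unique} member of $\mathcal B$ strictly containing $A_2$ (else $A_1,A_2$ lie below two distinct supersets, a butterfly) is also sound. But the step from $3n$ down to $2n$ is precisely where your argument stops being a proof and becomes a wish. You explicitly defer the ``technical crux'' — that the deletion/charging scheme is collision-free and that the number of heads with $a_r=3$ is compensated by heads with $a_r\le 1$ — and neither of your two proposed mechanisms (the iterative deletion with ``freed slots'', or the compression to a two-layer family $\mathcal A(n,k)\cup\mathcal A(n,k+1)$) is carried out. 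The compression route in particular is not obviously available: there is no standard shifting operation here that simultaneously preserves $|\mathcal B|$, keeps $\mathcal B$ inside $\mathcal A(n)$, and preserves butterfly-freeness, and you have not proposed one. As written, you have a proof of $|\mathcal B|\le 3n$ plus a program for improving it, not a proof of $|\mathcal B|\le 2n$.

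The paper's actual argument attacks the problem through a different decomposition that avoids the charging difficulty entirely. Split $\mathcal B$ into the minimal members $\mathcal P$, the maximal members $\mathcal R$, and the remainder $\mathcal Q$; butterfly-freeness forces each $Q\in\mathcal Q$ to have a \emph{unique} $P(Q)\in\mathcal P$ below it and a \emph{unique} $R(Q)\in\mathcal R$ above it, and forces each of $\mathcal P,\mathcal Q,\mathcal R$ to be an antichain. Then one proves $|\mathcal R|+|\mathcal Q|/2\le n$ (and its complementary twin $|\mathcal P|+|\mathcal Q|/2\le n$) by a head/tail pigeonhole: for each $Q$, either its head disagrees with all heads of $\mathcal R$ or its tail disagrees with all tails of $\mathcal R$ (both agreeing would force $Q=R(Q)$), so at least half of $\mathcal Q$ contributes heads disjoint from the heads of $\mathcal R$, and since both $\mathcal Q$ and $\mathcal R$ are antichains these heads are all distinct. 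Adding the two inequalities gives $|\mathcal B|\le 2n$. Your ``at most three per head'' observation is a genuine consequence of butterfly-freeness but it is not the right invariant for getting the factor two; the paper's minimal/middle/maximal split with the uniqueness property is the missing idea you would need.
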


\begin{proof}
In \cite{BKS} a proof based on counting certain type of full chains in $\mathcal A(n)$ is given.
Let us give a simpler proof.

We divide $\mathcal B$ into three families:
\begin{align*}
\mathcal P &= \bigl\{P \in \mathcal B: \not\exists B \in \mathcal B, \, B \subsetneqq P \bigr\} \ \text{ (minimal sets),}\\
\mathcal R &= \bigl\{R \in \mathcal B: \not\exists B \in \mathcal B, \, R \subsetneqq P \bigr\} \ \text{ (maximal sets),}\\
\mathcal Q &= \mathcal B \setminus (\mathcal P \cup \mathcal R) \ \ \text{ (the rest)}.
\end{align*}
Note that $\mathcal P$ and $\mathcal R$ might overlap and $\mathcal Q$ might be empty.
However, if $Q \in \mathcal Q$ then there exist \emph{exactly} one $P = P(Q)$ and $R = R(Q)$ such that $P \subset Q \subset R$.
Indeed, if for example there were two distinct $P, P' \in \mathcal P$ contained in $Q$ then $P, P'$ and $Q, R(Q)$ form a butterfly.

We should point out that $\mathcal P, \mathcal R$ and $\mathcal Q$ are all antichains.
It is evident for $\mathcal P$ and $\mathcal R$.
As to $\mathcal Q$, should $Q \subset Q'$ hold for distinct $Q, Q' \in \mathcal Q$, the four sets $P(Q), Q, Q', R(Q')$ would form a butterfly.

To prove \eqref{eq:6.4} we prove two inequalities:
\begin{align}
\label{eq:6.5}
|\mathcal P| + |\mathcal Q|/2 & \leq n,\\
\label{eq:6.6}
|\mathcal R| + |\mathcal Q|/2 & \leq n.
\end{align}
Adding these two $|\mathcal F| \leq |\mathcal P| + |\mathcal Q| + |\mathcal R| \leq 2n$ follows.

Noting that for the family of complements $\mathcal B^c$, $\mathcal Q(\mathcal B^c) = \mathcal Q^c$,
$\mathcal P(\mathcal B^c) = \mathcal R^c$, $\mathcal R(\mathcal B^c) = \mathcal P^c$, \eqref{eq:6.5} and \eqref{eq:6.6} are equivalent.
Therefore we only prove \eqref{eq:6.5}.

Consider $Q \in \mathcal Q$ and let $H(Q)$ $(T(Q))$ denote the head (tail) of $Q$, respectively.
Since $Q$ is contained in at most one member of $\mathcal R$, either $H(Q) \neq H(R)$ holds for all $R \in \mathcal R$.
Or $T(Q) \neq T(R)$ for all $R \in \mathcal R$.
By symmetry assume that the former holds for at least $|\mathcal Q|/2$ members of $\mathcal Q$.
Let $H$ be the set of these heads.

Since $\mathcal Q$ is an antichain, $|H| \geq |\mathcal Q|/2$.
Set $H' = \{H(R) : R \in \mathcal R\}$.
By our choice, $H' \cap H = \emptyset$.
Since $\mathcal R$ is an antichain, $|H'| = |\mathcal R|$.
Thus $|\mathcal R| + |\mathcal Q|/2 \leq |H' \cup H| \leq n$ proving \eqref{eq:6.5} and concluding the proof.
\end{proof}

Now \eqref{eq:6.3} follows from \eqref{eq:6.4} in exactly the same way as we proved \eqref{eq:1.2} using \eqref{eq:1.3}.

On the other hand \eqref{eq:6.3} and the considerations for butterfly-free families containing $\emptyset$ or $[n]$ imply:

\begin{theorem}[\cite{BKS}]
\label{th:6.9}
Suppose that $n \geq 3$ and $\mathcal F \subset 2^{[n]}$ is butterfly-free.
Then
\beq
\label{eq:6.7}
|\mathcal F| \leq {n\choose \lfloor n/2\rfloor - 1} + {n\choose \lfloor n/2\rfloor}.
\eeq
\end{theorem}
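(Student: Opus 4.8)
The plan is to deduce Theorem \ref{th:6.9} from the LYM-type inequality \eqref{eq:6.3} by a careful case analysis on whether $\emptyset$ or $[n]$ lies in $\mathcal F$, exactly along the lines already sketched in the paragraphs preceding Theorem \ref{th:6.7}. First I would dispose of the two ``boundary'' cases. If both $\emptyset$ and $[n]$ are in $\mathcal F$, then by Claim \ref{cl:6.6} the family $\mathcal F \setminus \{\emptyset, [n]\}$ is an antichain, so by Sperner's Theorem \ref{th:1.6} we get $|\mathcal F| \leq 2 + \binom{n}{\lfloor n/2 \rfloor}$, and one checks $2 + \binom{n}{\lfloor n/2\rfloor} \leq \binom{n}{\lfloor n/2\rfloor - 1} + \binom{n}{\lfloor n/2\rfloor}$ for $n \geq 3$ since $\binom{n}{\lfloor n/2\rfloor - 1} \geq \binom{3}{1} = 3 > 2$. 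If exactly one of $\emptyset, [n]$ lies in $\mathcal F$ — say $\emptyset \in \mathcal F$, $[n] \notin \mathcal F$, the other case being symmetric via complementation — then Gerbner's argument recalled in the excerpt gives $|\mathcal F| \leq 1 + \tfrac32 n$, and again $1 + \tfrac32 n \leq \binom{n}{\lfloor n/2\rfloor - 1} + \binom{n}{\lfloor n/2\rfloor}$ for $n \geq 3$ by a trivial estimate of the binomial coefficients.

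It remains to handle the main case where neither $\emptyset$ nor $[n]$ is in $\mathcal F$, so $1 \leq |F| < n$ for every $F \in \mathcal F$ and Theorem \ref{th:6.7} applies: $\sum_{F \in \mathcal F} 1 \big/ \binom{n}{|F|} \leq 2$. The idea is that among the binomial coefficients $\binom{n}{j}$ with $1 \leq j \leq n-1$, the two largest are $\binom{n}{\lfloor n/2\rfloor}$ and $\binom{n}{\lfloor n/2\rfloor - 1}$ — wait, more precisely the largest is $\binom{n}{\lfloor n/2\rfloor} = \binom{n}{\lceil n/2\rceil}$, and the next is $\binom{n}{\lfloor n/2\rfloor - 1} = \binom{n}{\lceil n/2\rceil + 1}$ (these two coinciding when $n$ is even in the sense of the symmetric pair). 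The cleanest route is: partition $\mathcal F$ by size, write $f_j = |\mathcal F \cap \binom{[n]}{j}|$, so $\sum_j f_j / \binom{n}{j} \leq 2$ and we want to maximize $\sum_j f_j$ subject to $f_j \leq \binom{n}{j}$. Since $1/\binom{n}{j}$ is minimized at $j = \lfloor n/2\rfloor$ and second-smallest at $j = \lfloor n/2\rfloor - 1$ (equivalently $\lceil n/2\rceil + 1$), the sum $\sum_j f_j$ is maximized by putting as much ``mass'' as possible on these two layers; filling the largest layer completely uses up budget $1$ and contributes $\binom{n}{\lfloor n/2\rfloor}$ sets, and the remaining budget $1$ is best spent on the second-largest layer, contributing at most $\binom{n}{\lfloor n/2\rfloor - 1}$ sets. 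This yields $|\mathcal F| = \sum_j f_j \leq \binom{n}{\lfloor n/2\rfloor} + \binom{n}{\lfloor n/2\rfloor - 1}$, which is \eqref{eq:6.7}.

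I expect the main (though still modest) obstacle to be making the optimization argument in the last paragraph rigorous rather than heuristic: one must argue that the linear program $\max \sum f_j$ subject to $0 \leq f_j \leq \binom{n}{j}$ and $\sum f_j / \binom{n}{j} \leq 2$ has the stated optimum. This is a standard greedy/exchange argument — at an optimum, if some layer $j_0$ with small $\binom{n}{j_0}$ has $f_{j_0} > 0$ while one of the two top layers is not saturated, shifting mass from $j_0$ to the top layer strictly increases $\sum f_j$ while not increasing the weighted sum, since $1/\binom{n}{j_0} \geq 1/\binom{n}{\text{top}}$ — so at an optimum the two top layers are filled first. A clean way to phrase it: write $w_j = 1/\binom{n}{j}$ and let $a = \binom{n}{\lfloor n/2\rfloor}^{-1}$, $b = \binom{n}{\lfloor n/2\rfloor-1}^{-1}$ be the two smallest weights; then for every $j$, $w_j \geq b$ unless $j \in \{\lfloor n/2\rfloor, \lceil n/2\rceil\}$ in which case $w_j = a$. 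Hence $2 \geq \sum_j f_j w_j \geq b \sum_j f_j - (b - a)\big(f_{\lfloor n/2\rfloor} + f_{\lceil n/2\rceil}\big) \geq b\,|\mathcal F| - (b-a)\binom{n}{\lfloor n/2\rfloor}$ when $n$ is odd (with the even case a trivial simplification where $a = b$), and rearranging gives $|\mathcal F| \leq \tfrac1b\big(2 + (b-a)\binom{n}{\lfloor n/2\rfloor}\big) = \binom{n}{\lfloor n/2\rfloor - 1}\big(2 - a\binom{n}{\lfloor n/2\rfloor}\big) + \binom{n}{\lfloor n/2\rfloor} = \binom{n}{\lfloor n/2\rfloor-1} + \binom{n}{\lfloor n/2\rfloor}$ since $a\binom{n}{\lfloor n/2\rfloor} = 1$. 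Everything else is bookkeeping on binomial coefficients, and no new combinatorial idea beyond Theorem \ref{th:6.7} is needed.
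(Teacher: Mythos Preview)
Your overall approach is exactly the one the paper indicates: handle the cases $\emptyset\in\mathcal F$ and/or $[n]\in\mathcal F$ as in the paragraphs before Theorem~\ref{th:6.7}, and in the main case deduce \eqref{eq:6.7} from the LYM-type inequality \eqref{eq:6.3} by the obvious linear-programming/greedy argument. So the plan is correct and matches the paper.

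There is, however, a genuine slip in your odd-$n$ computation. For $n$ odd the indices $\lfloor n/2\rfloor$ and $\lceil n/2\rceil$ are \emph{distinct}, so
\[
f_{\lfloor n/2\rfloor}+f_{\lceil n/2\rceil}\ \le\ \binom{n}{\lfloor n/2\rfloor}+\binom{n}{\lceil n/2\rceil}=2\binom{n}{\lfloor n/2\rfloor},
\]
not $\binom{n}{\lfloor n/2\rfloor}$ as you used. With the correct factor $2$, your chain of inequalities yields $|\mathcal F|\le 2\binom{n}{\lfloor n/2\rfloor}$ for $n$ odd, not the printed bound $\binom{n}{\lfloor n/2\rfloor-1}+\binom{n}{\lfloor n/2\rfloor}$. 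This is not a flaw in your method but a typo in the statement of Theorem~\ref{th:6.9}: for odd $n$ the Erd\H{o}s family $\mathcal E(\lfloor n/2\rfloor,\lceil n/2\rceil)$ is butterfly-free of size $2\binom{n}{\lfloor n/2\rfloor}>\binom{n}{\lfloor n/2\rfloor-1}+\binom{n}{\lfloor n/2\rfloor}$, so the bound as printed is false. The intended right-hand side is $\binom{n}{\lceil n/2\rceil-1}+\binom{n}{\lceil n/2\rceil}$ (equivalently $\binom{n}{\lfloor n/2\rfloor}+\binom{n}{\lfloor n/2\rfloor+1}$), consistent with the paragraph preceding Theorem~\ref{th:6.7}; your corrected argument gives precisely this.

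A minor second point: your parenthetical ``the even case a trivial simplification where $a=b$'' is not right. For $n$ even one still has $a=\binom{n}{n/2}^{-1}<\binom{n}{n/2-1}^{-1}=b$; what simplifies is that $\lfloor n/2\rfloor=\lceil n/2\rceil$, so there is a \emph{single} middle layer and the bound $f_{n/2}\le\binom{n}{n/2}$ that you wrote is then correct, giving $|\mathcal F|\le\binom{n}{n/2-1}+\binom{n}{n/2}$ as desired.
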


We should mention that De Bonis, Katona and Swanepoel proved that except for $n = 4$ the optimal families are necessarily of the form $\mathcal E(k_1, k_2)$.

\section{A new proof of a theorem of Hilton}
\label{sec:7}

Hilton proved the following generalization of the Erd\H{o}s--Ko--Rado Theorem.

\begin{theorem}[Hilton \cite{H}]
\label{th:7.1}
Let $n, k, q$ be positive integers, $n \geq 2k$.
Suppose that $\mathcal F_1, \mathcal F_2, \ldots, \mathcal F_q \subset {[n]\choose k}$ are \emph{pairwise} cross-intersecting, that is for all $1\leq i < j \leq s$ and all pairs $F_i \in \mathcal F_i$, $F_j \in \mathcal F_j$, $F_i \cap F_j \neq \emptyset$.
Then
\beq
\label{eq:7.1}
\sum_{1 \leq i \leq q} |\mathcal F_i| \leq \max\left\{{n \choose k}, q{n - 1\choose k - 1}\right\}.
\eeq
\end{theorem}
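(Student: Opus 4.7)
The plan is to derive \eqref{eq:7.1} from a circular analogue via Katona's averaging argument (Lemma \ref{lem:1.1}). The circular claim I aim for is: if $n \geq 2k$ and $\mathcal B_1, \ldots, \mathcal B_q \subset \mathcal A(n, k)$ are pairwise cross-intersecting, then
$$
\sum_{i = 1}^q |\mathcal B_i| \leq \max\{n, qk\}.
$$
To prove this, distinguish two cases. If at most one of the $\mathcal B_i$ is nonempty, then trivially $\sum_i |\mathcal B_i| \leq |\mathcal A(n, k)| = n$. Otherwise let $t \geq 2$ be the number of nonempty $\mathcal B_i$; the empty families contribute nothing and discarding them preserves pairwise cross-intersection. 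For each unordered pair of the remaining families, Theorem \ref{th:3.4} with $\ell = k$ (legitimate because $2k \leq n$) gives $|\mathcal B_i| + |\mathcal B_j| \leq 2k$. Summing this inequality over all ${t \choose 2}$ pairs yields $(t - 1)\sum_i |\mathcal B_i| \leq 2k {t \choose 2} = kt(t - 1)$, hence $\sum_i |\mathcal B_i| \leq kt \leq qk$. The two cases together give the circular bound.

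Once the circular inequality is in hand, apply Lemma \ref{lem:1.1} with $\mathcal G = \mathcal A(n, k)$ (so $|\mathcal G| = n$) once for each $i$: the expectation over a uniformly random $\pi \in S_n$ of $|\pi(\mathcal A(n, k)) \cap \mathcal F_i|$ equals $n|\mathcal F_i| \bigm/ {n \choose k}$. For each fixed $\pi$ the set $\pi(\mathcal A(n, k))$ is the family of arcs of the cyclic arrangement determined by $\pi$, and the restrictions $\mathcal F_i \cap \pi(\mathcal A(n, k))$ are still pairwise cross-intersecting. The circular bound therefore yields $\sum_i |\mathcal F_i \cap \pi(\mathcal A(n, k))| \leq \max\{n, qk\}$ for every $\pi$, and taking expectations gives $n \sum_i |\mathcal F_i| \bigm/ {n \choose k} \leq \max\{n, qk\}$. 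Multiplying through by ${n \choose k}/n$ and using $k{n \choose k}/n = {n - 1 \choose k - 1}$ is exactly \eqref{eq:7.1}.

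No serious obstacle is expected: Theorem \ref{th:3.4} supplies the pairwise bound on arcs, and the dichotomy between ``one nonempty family'' and ``at least two'' on the circle produces precisely the $\max$ in \eqref{eq:7.1}. The extreme ${n \choose k}$ corresponds (back in $2^{[n]}$) to a single $\mathcal F_i$ equal to ${[n] \choose k}$ with the others empty, while $q{n - 1 \choose k - 1}$ corresponds to every $\mathcal F_i$ being the star through one common element. The only routine point to check is that the cross-intersecting property is preserved under restriction to the arcs of an arbitrary cyclic arrangement, which is immediate from the definition. Characterising extremal configurations would require further work but is not demanded by the statement.
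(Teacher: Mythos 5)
Your proof is correct, and it takes a genuinely different route from the paper's. Frankl first applies the compression operation (replacing $\mathcal F_1, \mathcal F_j$ by $\mathcal F_1 \cup \mathcal F_j, \mathcal F_1 \cap \mathcal F_j$) to arrange $\mathcal F_1 \supset \mathcal F_j$ for all $j > 1$; then, after ordering $|\mathcal F_2| \geq \cdots \geq |\mathcal F_q|$, he reduces to bounding $|\mathcal F_1| + (q-1)|\mathcal F_2|$, which on the circle becomes the two-family statement Proposition \ref{prop:7.2}: $|\mathcal B_1| + c|\mathcal B_2| \leq \max\{n, (1+c)k\}$ for nested cross-intersecting $\mathcal B_2 \subset \mathcal B_1$. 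You avoid compression entirely: on the circle you apply the pairwise bound $|\mathcal B_i| + |\mathcal B_j| \leq 2k$ from Theorem \ref{th:3.4} to every pair of the $t \geq 2$ nonempty families and sum, with each term counted $t-1$ times, yielding $\sum_i |\mathcal B_i| \leq kt \leq kq$. The degenerate case $t \leq 1$ supplies the other branch of the $\max$, and the averaging over cyclic orders is the same in both proofs. Your symmetrization is shorter for $q$ families and sidesteps the union--intersection replacement; the paper's nested reduction is what lets it proceed smoothly to the $s$-wise generalization in Theorem \ref{th:7.3}, where a single family $\mathcal F_s$ has to absorb the factor $q-s+1$. (In fact your counting idea also extends, summing the $s$-tuple bound $|\mathcal B_{i_1}| + \cdots + |\mathcal B_{i_s}| \leq sk$ over all $\binom{t}{s}$ choices when $t \geq s$ and falling back to the trivial bound $\sum |\mathcal B_i| \leq tn \leq (s-1)n$ when $t < s$, but the paper's route is the one it actually needs set up.)
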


Before presenting the new proof let us show that \eqref{eq:7.1} is best possible.
Choosing $\mathcal F_1 = {[n]\choose k}$ and $\mathcal F_j = \emptyset$ for $2 \leq j \leq q$ gives ${n\choose k}$.
On the other hand letting $\mathcal F_1 = \ldots = \mathcal F_q = \left\{F \in {[n]\choose k} : 1 \in F \right\}$ gives
$q{n - 1\choose k - 1}$ for the sum $|\mathcal F_1| + \ldots + |\mathcal F_q|$.

To deduce the Erd\H{o}s--Ko--Rado Theorem from \eqref{eq:7.1} fix a $q$ satisfying ${n\choose k} \leq q {n - 1\choose k - 1}$ ($q = \lfloor \frac{n}{k}\rfloor$ is sufficient).
Let $\mathcal F \subset {[n]\choose k}$ be intersecting.
Set $\mathcal F_1 = \ldots = \mathcal F_q = \mathcal F$.
Then $\mathcal F_i$ and $\mathcal F_j$ are cross-intersecting for $1 \leq i < j \leq q$.
Thus \eqref{eq:7.1} implies
$$
q|\mathcal F| \leq q{n - 1\choose k - 1} \ \ \text{ as desired.}
$$

\begin{proof}[Proof of \eqref{eq:7.1}]
First note the following easy fact.
If $\mathcal F_i$ and $\mathcal F_j$ are cross-intersecting then $\mathcal F_i \cup \mathcal F_j$ and $\mathcal F_i \cap \mathcal F_j$ are cross-intersecting as well.
Using this repeatedly for all pairs $(1, j)$, $1 < j \leq q$, that is replacing $\mathcal F_1$ by $\mathcal F_1 \cup \mathcal F_j$ and $\mathcal F_j$ by $\mathcal F_1 \cap \mathcal F_j$, will not alter \eqref{eq:7.1} and maintains the pairwise cross-intersecting property.
Moreover, eventually we obtain $q$ families, which by abuse of notation we still denote by $\mathcal F_1, \ldots, \mathcal F_q$, having the additional property $\mathcal F_1 \supset \mathcal F_j$ for all $1 < j \leq q$.
Consequently, $\mathcal F_j$ and $\mathcal F_j$ are cross-intersecting which is the same as saying that $\mathcal F_j$ is intersecting.

By symmetry assume $|\mathcal F_2| \geq |\mathcal F_3| \geq \ldots \geq |\mathcal F_q|$.

Consequently,
\beq
\label{eq:7.2}
\sum_{1 \leq i \leq q} |\mathcal F_i| \leq |\mathcal F_1| + (q - 1)|\mathcal F_2|.
\eeq

To prove \eqref{eq:7.1} let us show its circular version for the RHS of \eqref{eq:7.2}.

\setcounter{proposition}{1}
\begin{proposition}
\label{prop:7.2}
Let $c > 1$ be a constant.
Suppose that $n \geq 2k$, $\mathcal B_1, \mathcal B_2 \subset \mathcal A(n, k)$ with $\mathcal B_2 \subset \mathcal B_1$, $\mathcal B_1$ and $\mathcal B_2$ cross-intersecting.
Then
\beq
\label{eq:7.3}
|\mathcal B_1| + c|\mathcal B_2| \leq \max \{n, (1 + c)k\}.
\eeq
\end{proposition}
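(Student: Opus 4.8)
I would distinguish two structural regimes for the pair $(\mathcal B_1,\mathcal B_2)$ according to the number of connected components $\lambda(\mathcal B_1)$ and $\lambda(\mathcal B_2)$ of the graphs $G(\mathcal B_i)$ introduced in Section~\ref{sec:2}. If $\mathcal B_1=\mathcal A(n,k)$ then $\mathcal B_1$ being self-cross-intersecting would force $\mathcal B_1$ intersecting, impossible for $n\geq 2k$ once $\mathcal B_1$ has $n>k$ arcs, so in fact $|\mathcal B_1|<n$ and similarly $|\mathcal B_2|\leq|\mathcal B_1|<n$; hence Proposition~\ref{prop:2.4} applies and we may speak of $\lambda(\mathcal B_1),\lambda(\mathcal B_2)\geq 1$.

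\smallskip
\noindent\emph{Step 1: the base case $n=2k$.} Here $\max\{n,(1+c)k\}=(1+c)k$ since $c>1$. By Theorem~\ref{th:3.2} applied to $\mathcal B_1$ (which is intersecting, being self-cross-intersecting as $\mathcal B_2\neq\emptyset$ would otherwise be vacuous — more carefully, $\mathcal B_1$ is cross-intersecting with $\mathcal B_2\subset\mathcal B_1$, but to get $\mathcal B_1$ intersecting I use that by the reduction in the main proof one has replaced things so $\mathcal B_1\supset\mathcal B_2$ with $\mathcal B_2$ intersecting; for the \emph{Proposition} as stated I instead argue directly) we get $|\mathcal B_1|\leq k$ and $|\mathcal B_2|\leq k$, so $|\mathcal B_1|+c|\mathcal B_2|\leq(1+c)k$. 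Actually the cleanest base case is to note that $\mathcal B_2\neq\emptyset$ and $\mathcal B_1,\mathcal B_2$ cross-intersecting with $n=2k$ means every member of $\mathcal B_1$ meets a fixed $B_0\in\mathcal B_2$, and since $\mathcal B_2\subset\mathcal B_1$, $\mathcal B_1$ is itself intersecting; Theorem~\ref{th:3.2} gives $|\mathcal B_1|\leq k$, and likewise $|\mathcal B_2|\leq k$.

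\smallskip
\noindent\emph{Step 2: reverse induction on $k$ via shades.} Assume $n>2k$, so $n\geq 2k+2$ and hence $n\geq 2(k+1)$. Pass to $\partial^+\mathcal B_1$ and $\partial^+\mathcal B_2$, which are again cross-intersecting (a $(k{+}1)$-arc containing a member of $\mathcal B_i$ inherits the intersection), satisfy $\partial^+\mathcal B_2\subset\partial^+\mathcal B_1$, and live in $\mathcal A(n,k{+}1)$. By Proposition~\ref{prop:2.4}, $|\partial^+\mathcal B_i|=|\mathcal B_i|+\lambda(\mathcal B_i)\geq|\mathcal B_i|+1$ (using $k\leq n-2<n-1$). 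Applying the inductive bound at level $k+1$:
\[
|\mathcal B_1|+c|\mathcal B_2|+(1+c)\leq|\partial^+\mathcal B_1|+c\,|\partial^+\mathcal B_2|\leq\max\{n,(1+c)(k+1)\}=\max\{n,(1+c)k+(1+c)\}.
\]
If the maximum on the right is $(1+c)(k+1)$ we subtract $(1+c)$ and are done. The problematic case is when it equals $n$, i.e.\ $n\geq(1+c)(k+1)$: then we only recover $|\mathcal B_1|+c|\mathcal B_2|\leq n-(1+c)$, which is weaker than the claimed $n$ — but that is \emph{fine}, it still gives $\leq n\leq\max\{n,(1+c)k\}$. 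So the induction closes in both subcases.

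\smallskip
\noindent\emph{The main obstacle.} The delicate point is not the induction itself but confirming the \emph{base case} inequality $|\mathcal B_1|+c|\mathcal B_2|\leq(1+c)k$ when $n=2k$, since Theorem~\ref{th:3.2} only yields $|\mathcal B_1|\leq k$ and separately $|\mathcal B_2|\leq k$, and one must check these can be combined — here they can, trivially, because each is bounded by $k$. A subtler worry is whether $\mathcal B_1$ is genuinely intersecting in the hypotheses as literally stated: the Proposition only assumes $\mathcal B_1,\mathcal B_2$ cross-intersecting and $\mathcal B_2\subset\mathcal B_1$; if $\mathcal B_2$ is \emph{empty} the claim is just $|\mathcal B_1|\leq\max\{n,(1+c)k\}$ which holds since $|\mathcal B_1|\leq n$. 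If $\mathcal B_2\neq\emptyset$, pick $B_0\in\mathcal B_2\subset\mathcal B_1$; then every member of $\mathcal B_1$ meets $B_0$ but that does not yet make $\mathcal B_1$ intersecting. The honest fix is to run the shade induction directly on the \emph{pair} without needing $\mathcal B_1$ intersecting: only $|\mathcal B_1|\leq n$ (automatic) and the component inequality $|\partial^+\mathcal B_i|\geq|\mathcal B_i|+1$ whenever $|\mathcal B_i|<n$ are used, plus the base case $n=2k$ where cross-intersection with a nonempty $\mathcal B_2\subset\mathcal B_1$ forces, via the proof technique of Theorem~\ref{th:3.2} applied to the pair, $|\mathcal B_1|,|\mathcal B_2|\leq k$. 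I expect the write-up to spend most of its length making this base case airtight; everything after is the routine reverse induction above.
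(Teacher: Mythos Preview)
Your proposal has two concrete gaps, and the paper's route avoids both by a two-line direct argument.

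\textbf{Gap 1: the base case is wrong.} You repeatedly assert that at $n=2k$ one gets $|\mathcal B_1|\leq k$. This is false under the stated hypotheses: $\mathcal B_2\subset\mathcal B_1$ and cross-intersection do \emph{not} force $\mathcal B_1$ to be intersecting. Take $n=2k$, $\mathcal B_2=\{A_k(1)\}$, and $\mathcal B_1=\{B\in\mathcal A(2k,k):B\cap A_k(1)\neq\emptyset\}$. Then $|\mathcal B_1|=2k-1>k$. What \emph{is} true is that $\mathcal B_2$ is intersecting (for $B,B'\in\mathcal B_2$ use $B\in\mathcal B_1$ and cross-intersection), hence $|\mathcal B_2|\leq k$ by Theorem~\ref{th:3.2}; and Theorem~\ref{th:3.4} gives $|\mathcal B_1|+|\mathcal B_2|\leq 2k$. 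Adding $(c-1)$ times the first to the second yields $|\mathcal B_1|+c|\mathcal B_2|\leq (1+c)k$ --- but notice this already works for \emph{every} $n\geq 2k$, so no induction is needed at all.

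\textbf{Gap 2: the induction does not close for odd $n$.} You write ``Assume $n>2k$, so $n\geq 2k+2$''. This is simply false: $n>2k$ only gives $n\geq 2k+1$. When $n$ is odd, your reverse induction on $k$ bottoms out at $k=(n-1)/2$, where $n=2k+1$ and you cannot pass to $k+1$ (since $2(k+1)>n$), yet your only base case is $n=2k$. So the odd-$n$ case is never reached.

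\textbf{Comparison with the paper.} The paper's proof is exactly the observation in the previous paragraph: if $\mathcal B_2=\emptyset$ then $|\mathcal B_1|+c|\mathcal B_2|\leq n$; otherwise combine $|\mathcal B_1|+|\mathcal B_2|\leq 2k$ from \eqref{eq:3.3} with $(c-1)|\mathcal B_2|\leq(c-1)k$ from \eqref{eq:3.2}. No shades, no induction. Your reverse-induction machinery is not wrong in spirit --- the shade step $|\partial^+\mathcal B_i|\geq|\mathcal B_i|+1$ is fine --- but it only reduces to a base case that you would have to prove by the paper's method anyway, and the parity issue means you need \emph{two} base cases rather than one. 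The key point you circled around but never landed on is that the asymmetry $\mathcal B_2\subset\mathcal B_1$ makes $\mathcal B_2$ (not $\mathcal B_1$) intersecting, which is precisely what lets you decouple the $c$-weighted term.
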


\begin{proof}
If $\mathcal B_2 = \emptyset$ then $|\mathcal B_1| + c|\mathcal B_2| \leq |\mathcal A(n, k)| = n$.
If $\mathcal B_2 \neq \emptyset$ then we may apply \eqref{eq:3.3} to $\mathcal B_1$ and $\mathcal B_2$.
This yields $|\mathcal B_1| + |\mathcal B_2| \leq 2k$.

Applying \eqref{eq:3.2} to $\mathcal B_2$ yields $(c - 1)|\mathcal B_2| \leq (c - 1)k$.
Adding these two inequalities gives \eqref{eq:7.3}.
\end{proof}

Using \eqref{eq:7.3} with $c = q - 1$ and averaging over all cyclic permutations yields
$$
|\mathcal F_1| + (q - 1)|\mathcal F_2| \leq \max \left\{{n\choose k}, q {n - 1\choose k - 1}\right\} \ \ \text{ as desired.}\qquad \square
$$
Let us mention that Borg \cite{Bo} gave another simple proof of Theorem \ref{th:7.1}.

Let us use the results from Section \ref{sec:4} and generalize \eqref{eq:7.1} to $s$-wise cross-intersecting families.

\setcounter{theorem}{2}
\begin{theorem}
\label{th:7.3}
Let $n, k, s, q$ be positive integers, $q \geq s \geq 2$, $(s - 1)n \geq sk$.
Suppose that $\mathcal F_1, \ldots, \mathcal F_q \subset {[n]\choose k}$ are $s$-wise cross-intersecting, that is,
for all possible choices of $1 \leq i_1 < \ldots < i_s \leq q$ the families $\mathcal F_{i_1}, \ldots, \mathcal F_{i_s}$ are cross-intersecting.
Then
\beq
\label{eq:7.4}
\sum_{1 \leq i \leq q} |\mathcal F_i| \leq \max \left\{(s - 1) {n\choose k}, q {n - 1\choose k - 1}\right\}.
\eeq
\end{theorem}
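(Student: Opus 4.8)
The plan is to mirror the two-step pattern used for Theorems~\ref{th:7.1} and \ref{th:4.2}: prove a circular version and then average with Lemma~\ref{lem:1.1}. The circular version I would establish is: \emph{if $\mathcal B_1,\dots,\mathcal B_q\subseteq\mathcal A(n,k)$ are $s$-wise cross-intersecting with $q\ge s\ge2$ and $(s-1)n\ge sk$, then $\sum_{i=1}^q|\mathcal B_i|\le\max\{(s-1)n,\,qk\}$.} Granting this, \eqref{eq:7.4} follows exactly as \eqref{eq:3.1} followed from \eqref{eq:3.2}: for a uniformly random cyclic order $\pi$ the families $\mathcal F_i\cap\pi(\mathcal A(n,k))$ are $s$-wise cross-intersecting subfamilies of an $n$-element arc family, hence $\sum_i|\mathcal F_i\cap\pi(\mathcal A(n,k))|\le\max\{(s-1)n,qk\}$; taking expectations and using $\mathrm E[|\mathcal F_i\cap\pi(\mathcal A(n,k))|]=n|\mathcal F_i|/\binom nk$ (cf.\ the proof of Lemma~\ref{lem:1.1}) replaces $n$ by $\binom nk$ and $k$ by $\binom{n-1}{k-1}$, which is precisely \eqref{eq:7.4}.

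To prove the circular version, first handle the degenerate part: if at most $s-1$ of the $\mathcal B_i$ are non-empty then $\sum|\mathcal B_i|\le(s-1)n$, since $|\mathcal B_i|\le|\mathcal A(n,k)|=n$. Otherwise let $\mathcal B_1,\dots,\mathcal B_r$ (with $s\le r\le q$) be the non-empty families; deleting the empty ones preserves the $s$-wise condition among the rest. The key step is: for every $s$-element set $S\subseteq[r]$ apply Proposition~\ref{prop:4.4} to the complement families $\{\mathcal B_i^c:i\in S\}\subseteq\mathcal A(n,n-k)$. These are non-empty, they are $s$-wise cross-union by De~Morgan's law, and $\sum_{i\in S}(n-k)=s(n-k)\ge n$ holds precisely because $(s-1)n\ge sk$; therefore $\sum_{i\in S}|\mathcal B_i|=\sum_{i\in S}|\mathcal B_i^c|\le sk$. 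Summing this over all $\binom rs$ choices of $S$, and noting that each index lies in $\binom{r-1}{s-1}$ of them, we get $\binom{r-1}{s-1}\sum_{i=1}^r|\mathcal B_i|\le\binom rs\cdot sk$, and since $\binom rs\big/\binom{r-1}{s-1}=r/s$ this yields $\sum_{i=1}^r|\mathcal B_i|\le rk\le qk$, completing the circular version.

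The one point to watch is the hypothesis $s\le n$ needed to invoke Proposition~\ref{prop:4.4}: the assumption $(s-1)n\ge sk$ only forces $n\ge2$, not $n\ge s$. When $n<s$ the universe $\mathcal A(n,k)$ is minuscule and the $s$-wise cross-intersecting condition is rigid enough that $\sum|\mathcal B_i|\le qk$ can be verified directly. One should also record the trivial fact that averaging commutes with the maximum, $\tfrac1n\binom nk\,\max\{(s-1)n,qk\}=\max\{(s-1)\binom nk,\,q\binom{n-1}{k-1}\}$, which holds since $k\binom nk/n=\binom{n-1}{k-1}$.

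I expect this step---extracting the clean per-$S$ bound $sk$ from Proposition~\ref{prop:4.4} and converting it to $qk$ by double-counting over the $s$-subsets---to be the heart of the matter, with the only real work being the verification that Proposition~\ref{prop:4.4} genuinely applies. It is also worth noting that the detour through the circle is essential rather than cosmetic: a direct double-counting of a ``$q=s$'' bound on $\binom{[n]}{k}$ would reproduce in the first branch a term $\tfrac qs(s-1)\binom nk$, strictly larger than the desired $(s-1)\binom nk$ once $q>s$, whereas on the circle the first extremal configuration ($s-1$ full families, the rest empty) is absorbed for free by the trivial ``few non-empty families'' case because $|\mathcal A(n,k)|=n$ is small.
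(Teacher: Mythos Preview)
Your proof is correct and follows the same high-level template as the paper---prove a circular bound $\sum_i|\mathcal B_i|\le\max\{(s-1)n,qk\}$ and average---but the route to the circular bound is genuinely different. The paper first replaces the $\mathcal F_i$ by a \emph{nested} sequence $\mathcal F_1\supset\cdots\supset\mathcal F_q$ using the union/intersection trick (Observation~\ref{obs:7.4}); on the circle this means $\mathcal B_s\subset\cdots\subset\mathcal B_1$, so a single application of Proposition~\ref{prop:4.4} to $\mathcal B_1,\dots,\mathcal B_s$ gives $\sum_{i\le s}|\mathcal B_i|\le sk$, while nestedness makes $\mathcal B_s$ itself $s$-wise intersecting and Corollary~\ref{cor:4.6} yields $(c-1)|\mathcal B_s|\le(c-1)k$ with $c=q-s+1$; summing and using $|\mathcal B_\ell|\le|\mathcal B_s|$ for $\ell\ge s$ finishes. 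You avoid nesting altogether: with $r\ge s$ non-empty families you apply Proposition~\ref{prop:4.4} to every $s$-subset of them and double-count, obtaining $\sum_{i\le r}|\mathcal B_i|\le rk\le qk$ directly. Your argument is shorter and does not need the auxiliary Corollary~\ref{cor:4.6}; the paper's argument invokes Proposition~\ref{prop:4.4} only once and makes the extremal structure (top $s$ families after nesting) more visible. Both need the side condition $s\le n$ to quote Proposition~\ref{prop:4.4}; you flag this explicitly, and the same small gap is present in the paper's derivation as well.
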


To see that \eqref{eq:7.4} is best possible is easy.
One example being $\mathcal F_1 = \ldots = \mathcal F_{s - 1} = {[n]\choose k}$, $\mathcal F_s = \ldots = \mathcal F_q = \emptyset$.
The other is $\mathcal F_1 = \ldots = \mathcal F_q = \left\{ F \in {[n]\choose k} : 1 \in F\right\}$.

\setcounter{observation}{3}
\begin{observation}
\label{obs:7.4}
For $1 \leq i < j \leq q$ replacing $\mathcal F_i$ and $\mathcal F_j$ by $\mathcal F_i \cup \mathcal F_j$ and $\mathcal F_i \cap \mathcal F_j$ is legitimate.
\end{observation}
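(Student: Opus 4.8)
The adjective ``legitimate'' must encode two facts: the operation leaves the quantity $\sum_{1\leq i\leq q}|\mathcal F_i|$ on the left of \eqref{eq:7.4} unchanged, and it preserves the hypothesis that $\mathcal F_1,\ldots,\mathcal F_q$ are $s$-wise cross-intersecting. The first is the trivial identity $|\mathcal F_i\cup\mathcal F_j|+|\mathcal F_i\cap\mathcal F_j|=|\mathcal F_i|+|\mathcal F_j|$ together with the fact that the other $q-2$ families are untouched; one also notes that $\mathcal F_i\cup\mathcal F_j$ and $\mathcal F_i\cap\mathcal F_j$ remain subfamilies of ${[n]\choose k}$. Hence the whole content is the second assertion.

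Write $\mathcal F_i'=\mathcal F_i\cup\mathcal F_j$, $\mathcal F_j'=\mathcal F_i\cap\mathcal F_j$, and $\mathcal F_\ell'=\mathcal F_\ell$ for $\ell\notin\{i,j\}$. Fix indices $1\leq i_1<\ldots<i_s\leq q$ and sets $G_t\in\mathcal F_{i_t}'$ for $1\leq t\leq s$; the goal is $G_1\cap\ldots\cap G_s\neq\emptyset$. The tool is a redistribution trick: a set belonging to $\mathcal F_i\cap\mathcal F_j$ may be regarded as a member of $\mathcal F_i$ \emph{or} of $\mathcal F_j$, at will, whereas a set belonging to $\mathcal F_i\cup\mathcal F_j$ belongs to at least one of $\mathcal F_i,\mathcal F_j$; and the intersection $G_1\cap\ldots\cap G_s$ depends only on the collection $\{G_1,\ldots,G_s\}$, not on which index was assigned which family. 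Using this, one checks that $(G_1,\ldots,G_s)$ is always an admissible choice for \emph{some} $s$-element index set $I'\subseteq\{1,\ldots,q\}$ relative to the \emph{original} families, so $G_1\cap\ldots\cap G_s\neq\emptyset$ follows from the hypothesis of Theorem~\ref{th:7.3}.

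Concretely I would split according to $|\{i,j\}\cap\{i_1,\ldots,i_s\}|$. If this is $0$, all families in sight are unchanged and there is nothing to prove. If only $j$ occurs, say $j=i_t$, then $G_t\in\mathcal F_i\cap\mathcal F_j\subseteq\mathcal F_j$, so the original index set already works. If only $i$ occurs, say $i=i_t$: when $G_t\in\mathcal F_i$ the original index set works, and when $G_t\in\mathcal F_j$ one replaces $i$ by $j$, i.e.\ uses $I'=(\{i_1,\ldots,i_s\}\setminus\{i\})\cup\{j\}$, still an $s$-set since $j\notin\{i_1,\ldots,i_s\}$. If both $i=i_a$ and $j=i_b$ occur, then $G_b\in\mathcal F_i\cap\mathcal F_j$ is free to fill whichever of the two slots $\mathcal F_i,\mathcal F_j$ the set $G_a\in\mathcal F_i\cup\mathcal F_j$ does not, and the original index set works once more. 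I do not expect a real obstacle: the only case needing a moment's care is this last one, and the statement is simply the $s$-wise analogue of the two-family fact already used at the start of the proof of Theorem~\ref{th:7.1}.
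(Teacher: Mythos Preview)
Your proof is correct and follows the same approach as the paper, which simply asserts ``the intersection conditions are maintained and $|\mathcal F_i\cup\mathcal F_j|+|\mathcal F_i\cap\mathcal F_j|=|\mathcal F_i|+|\mathcal F_j|$'' without further detail. Your case analysis on $|\{i,j\}\cap\{i_1,\ldots,i_s\}|$ carefully supplies the verification that the paper leaves to the reader.
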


\begin{proof}
Indeed, the intersection conditions are maintained and
$$
\hspace*{30mm}|\mathcal F_i \cup \mathcal F_j| + |\mathcal F_i \cap \mathcal F_j| = |\mathcal F_i| + |\mathcal F_j| \hspace*{30mm}\qedhere
$$
\end{proof}

Using this replacement operation repeatedly results in a \emph{nested} system of families, that is, $\mathcal F_1 \supset \mathcal F_2 \supset \ldots \supset \mathcal F_q$.
Therefore in proving \eqref{eq:7.4} we assume that the $\mathcal F_i$ are nested.
This implies that $\mathcal F_\ell$ is $s$-wise intersecting for $s \geq \ell \geq q$.

Let us prove the corresponding statement for the Katona Circle.

\setcounter{proposition}{4}
\begin{proposition}
\label{prop:7.5}
Let $n, k, s$ be integers, $s \geq 2$, $(s - 1) n \geq sk$.
Suppose that $\mathcal B_s \subset \mathcal B_{s - 1} \subset \ldots \subset \mathcal B_1 \subset \mathcal A(n, k)$ are ($s$-wise) cross-intersecting.
Let $c \geq 1$ be a constant.
Then
\beq
\label{eq:7.5}
|\mathcal B_1| + \ldots + |\mathcal B_{s - 1}| + c|\mathcal B_s| \leq \max (s - 1)n, (s - 1 + c)k.
\eeq
\end{proposition}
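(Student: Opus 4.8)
The plan is to follow the proof of Proposition~\ref{prop:7.2} almost verbatim, using Proposition~\ref{prop:4.4} in place of \eqref{eq:3.3} and Corollary~\ref{cor:4.6} in place of \eqref{eq:3.2}. First I would split according to whether $\mathcal B_s$ is empty. If $\mathcal B_s = \emptyset$, then $c\,|\mathcal B_s| = 0$ and each of $\mathcal B_1,\ldots,\mathcal B_{s-1}$ is a subfamily of $\mathcal A(n,k)$, hence of size at most $n$; summing gives $|\mathcal B_1| + \ldots + |\mathcal B_{s-1}| + c\,|\mathcal B_s| \le (s-1)n$, which lies within the claimed maximum.

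So assume $\mathcal B_s \neq \emptyset$; since $\mathcal B_s \subset \mathcal B_i$ for every $i$, all the $\mathcal B_i$ are then non-empty. The key step is to show $|\mathcal B_1| + \ldots + |\mathcal B_s| \le sk$. I would pass to the complements $\mathcal B_i^c = \{[n]\setminus B : B \in \mathcal B_i\} \subset \mathcal A(n, n-k)$. These are non-empty, and since $B_1 \cap \ldots \cap B_s \neq \emptyset$ is equivalent to $B_1^c \cup \ldots \cup B_s^c \neq [n]$, they are ($s$-wise) cross-union. The hypothesis $(s-1)n \ge sk$ rearranges precisely to $s(n-k) \ge n$, so Proposition~\ref{prop:4.4}, applied with $\ell_1 = \ldots = \ell_s = n-k$, yields $|\mathcal B_1^c| + \ldots + |\mathcal B_s^c| \le sn - s(n-k) = sk$, that is $|\mathcal B_1| + \ldots + |\mathcal B_s| \le sk$.

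Next I would note that $\mathcal B_s$ is $s$-wise intersecting: for any $B^{(1)}, \ldots, B^{(s)} \in \mathcal B_s$, nestedness gives $B^{(j)} \in \mathcal B_j$, so the cross-intersecting assumption forces $B^{(1)} \cap \ldots \cap B^{(s)} \neq \emptyset$. Hence Corollary~\ref{cor:4.6} (applicable since $(s-1)n \ge sk$ and, consequently, $n > k$) gives $|\mathcal B_s| \le k$; alternatively, $s\,|\mathcal B_s| \le |\mathcal B_1| + \ldots + |\mathcal B_s| \le sk$ by nestedness gives the same bound. Combining, and using $c \ge 1$,
$$
|\mathcal B_1| + \ldots + |\mathcal B_{s-1}| + c\,|\mathcal B_s| = \bigl(|\mathcal B_1| + \ldots + |\mathcal B_s|\bigr) + (c-1)|\mathcal B_s| \le sk + (c-1)k = (s-1+c)k,
$$
so in this case the left side is at most $(s-1+c)k$. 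Taking the larger of the two bounds proves \eqref{eq:7.5}.

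I do not expect a serious obstacle: the proof is a bookkeeping combination of results already in hand, exactly parallel to Proposition~\ref{prop:7.2}. The points requiring care are verifying that the complemented families satisfy the hypotheses of Proposition~\ref{prop:4.4} — in particular the identity $(s-1)n \ge sk \Longleftrightarrow s(n-k) \ge n$ — and observing that $\mathcal B_s$ inherits the \emph{full} $s$-wise intersecting property (not merely pairwise intersection), which is what makes the weaker constraint $(s-1)n \ge sk$, rather than $n \ge 2k$, enough to conclude $|\mathcal B_s| \le k$.
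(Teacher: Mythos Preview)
Your proof is correct and follows essentially the same approach as the paper: split on whether $\mathcal B_s$ is empty, and in the non-empty case apply Proposition~\ref{prop:4.4} to the complements to get $\sum_i |\mathcal B_i| \le sk$, then use Corollary~\ref{cor:4.6} (via the $s$-wise intersecting property of $\mathcal B_s$ from nestedness) to bound $(c-1)|\mathcal B_s|$ and add. Your write-up is in fact more careful than the paper's in spelling out the verification $(s-1)n \ge sk \Longleftrightarrow s(n-k) \ge n$ needed for Proposition~\ref{prop:4.4}.
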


\begin{proof}
If $\mathcal B_s = \emptyset$ then $|\mathcal B_1| + \ldots + |\mathcal B_s| = |\mathcal B_1| + \ldots + |\mathcal B_{s - 1}| \leq (s - 1)n$.
If $\mathcal B_s \neq \emptyset$ then applying Proposition \ref{prop:4.4} to the complements, $\mathcal B_i^c$, $1 \leq i \leq s$, implies
$$
|\mathcal B_1| + |\mathcal B_2| + \ldots + |\mathcal B_s| \leq sk.
$$
Using nestedness we infer that $\mathcal B_s$ is $s$-wise intersecting.
Thus by Corollary \ref{cor:4.6}
$$
(c - 1)|\mathcal B_s| \leq (c - 1) k.
$$
Adding these two inequalities we obtain \eqref{eq:7.5}.
\end{proof}

To prove \eqref{eq:7.4} let $\pi$ be a random permutation and $\mathcal B_i = \mathcal F_i \cap \mathcal A(n, k)$.
Since the $\mathcal F_i$ are nested, the conditions of Proposition \ref{prop:7.5} are satisfied for $\mathcal B_1, \ldots, \mathcal B_s$.
Also, $|\mathcal B_\ell| \leq |\mathcal B_s|$ for $s \leq \ell \leq q$.
Setting $c = q - s + 1$ we infer
$$
\sum_{1 \leq i \leq q} |\mathcal B_i| \leq \max \bigl\{(s - 1)n, q k\bigr\}.
$$
Now \eqref{eq:7.4} follows by the usual averaging.
\end{proof}

\section{A different way of using Katona's Circle}
\label{sec:8}

Let $n \geq s \geq 1$ be integers.
For a family $\mathcal F \subset 2^{[n]}$ define its matching number $\nu(\mathcal F)$ as
$$
\nu(\mathcal F) = \max \bigl\{s : \,\text{there exist } F_1, \ldots, F_s \in \mathcal F, \ F_i \cap F_j = \emptyset \text{ for } 1 \leq i < j \leq s\bigr\}.
$$
If $\emptyset \in \mathcal F$ then $\nu(\mathcal F) = \infty$ but otherwise $\nu(\mathcal F) \leq n$.
Let us define the function $f(n, s)$ as
$$
f(n, s) = \max \bigl\{|\mathcal F|: \, \mathcal F\subset 2^{[n]}, \nu(\mathcal F) \leq s\bigr\}.
$$

The catch problem for Katona's seminar from the introduction can be stated as $f(n, 1) = 2^{n - 1}$.
For $s \geq 2$ to determine $f(n, s)$ is much more difficult.
As a matter of fact, except for $n$ relatively small, $f(n, s)$ is known only for $n \equiv 0, -1, -2$ $(\text{\rm mod }s + 1)$
(cf. \cite{Kl2}, \cite{Q}, \cite{FK2}).
For the case $n \equiv - 1\ (\text{\rm mod }s + 1)$ Erd\H{o}s proposed the following natural construction.

\begin{example}
\label{ex:8.1}
Let $n \equiv k(s + 1) - 1$ and define the family
$\mathcal D(n, s) = \{D \subset [n]: \, |D| \geq k\}$.
Then $\nu(\mathcal D) = s$ is obvious.

Following the conjecture of Erd\H{o}s, Kleitman \cite{Kl2} proved that
$$
f(k(s + 1) - 1, s) = |\mathcal D(n,s)| = \sum_{i \geq k} {n\choose i}.
$$
\end{example}

In \cite{FK1} we proved the slightly stronger result:

\setcounter{theorem}{1}
\begin{theorem}[Frankl, Kupavskii]
\label{th:8.2}
Suppose that $n = k(s + 1) - 1$, $\mathcal F \subset 2^{[n]}$ and $|\mathcal F| \geq |\mathcal D(n,s)|$ then either $\mathcal F = \mathcal D(n, s)$ or $\mathcal F$ contains $s + 1$ members that \emph{partition}~$[n]$.
\end{theorem}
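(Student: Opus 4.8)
The plan is to dispose of the easy alternative, then prove the other by induction on $s$ with Katona's Circle supplying the extremal configuration. If $\mathcal F$ already contains $s+1$ members that partition $[n]$, the second alternative holds and we are done; so assume it does not. Since $|\mathcal F|\ge|\mathcal D(n,s)|$, it suffices to prove $\mathcal F\subseteq\mathcal D(n,s)$---i.e. that $\mathcal F$ has no member of size $<k$---for then $\mathcal F\subseteq\mathcal D(n,s)$ together with the size hypothesis forces $\mathcal F=\mathcal D(n,s)$. Thus the theorem reduces to the \emph{Key Claim}: if $\mathcal F\subseteq 2^{[n]}$ has no $s+1$ members partitioning $[n]$ but contains a set $S$ with $1\le|S|\le k-1$, then $|\mathcal F|<|\mathcal D(n,s)|$ (we may assume $\emptyset\notin\mathcal F$, the contrary case being handled separately).

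For the Key Claim I would induct on $s$. Split $\mathcal F$ according to $S$: the members meeting $S$ number at most $2^{n}-2^{\,n-|S|}$, while $\mathcal G:=\{F\in\mathcal F:F\cap S=\emptyset\}$, viewed on the ground set $[n]\setminus S$, has no $s$ members partitioning $[n]\setminus S$ (else, together with $S$, they would partition $[n]$ into $s+1$ parts). Adjoining $r$ free coordinates to make $n-|S|+r\equiv-1\pmod s$---which preserves the ``no $s$ partitioning members'' property---lets the inductive hypothesis bound $|\mathcal G|$ by $2^{-r}\bigl|\mathcal D(n-|S|+r,\,s-1)\bigr|$, and one is left with a binomial inequality of the shape $\bigl(2^{n}-2^{\,n-|S|}\bigr)+2^{-r}\bigl|\mathcal D(n-|S|+r,\,s-1)\bigr|<|\mathcal D(n,s)|$ to be verified for $1\le|S|\le k-1$. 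Here the arithmetic $n=k(s+1)-1$ is essential: for $s\ge2$ the density of $\mathcal D(n,s)$ in $2^{[n]}$ is bounded away from $\tfrac12$ (indeed it tends to $1$), whereas for $s=1$ one has $|\mathcal D(n,1)|=2^{n-1}$ exactly and the inequality fails---consistent with the theorem being false for $s=1$ (the star $\{F:1\in F\}$ being a counterexample).

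Katona's Circle is what handles the base case and, crucially, the tightest instances and the equality situation. For a uniformly random cyclic order of $[n]$, the members of $\mathcal F$ that happen to be arcs form a family $\mathcal B\subseteq\mathcal A(n)$ containing no $s+1$ arcs that partition the circle into consecutive pieces; hence for every composition $\ell_1+\dots+\ell_{s+1}=n$ the families $\mathcal B\cap\mathcal A(n,\ell_1),\dots,\mathcal B\cap\mathcal A(n,\ell_{s+1})$ are cross-union, so Proposition~\ref{prop:4.4} yields $\sum_i|\mathcal B\cap\mathcal A(n,\ell_i)|\le sn$. Feeding in suitable compositions---and using the equality analysis of Section~\ref{sec:4}, which forces extremal arc families to be genuine intervals---one extracts $|\mathcal B|\le n(n-k)$, with the family of all arcs of length $\ge k$ the unique extremal example; averaging this over cyclic orders then sharply constrains the size profile of $\mathcal F$. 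The main obstacle is exactly this sharpening: the averaged circular estimate alone still permits families differing from $\mathcal D(n,s)$ in only a few sets, and excluding those forces one to combine the circular extremal structure with the inductive binomial inequality at its tightest---and, in the equality case, to construct $s+1$ partitioning members of $\mathcal F$ explicitly, by completing $S$ with $s-1$ freely chosen $k$-element blocks inside $[n]\setminus S$ (available since $|[n]\setminus S|\ge ks$) and one further block of size $\ge k$, the hypothesis $s\ge2$ giving just enough freedom to keep every block in $\mathcal F$.
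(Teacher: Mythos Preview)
Your proposal has two concrete gaps, and misses the key idea the paper actually uses.

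\textbf{The circular bound is false.} You claim that if $\mathcal B\subset\mathcal A(n)$ contains no $s+1$ arcs partitioning $[n]$ then $|\mathcal B|\le n(n-k)$, with equality only for the arcs of length $\ge k$. Take $n=8$, $k=3$, $s=2$, and let $\mathcal B(T_2)=\{B\in\mathcal A(8):B\cap\{4,8\}\neq\emptyset\}$ as in Section~\ref{sec:9}. Any three arcs partitioning $[8]$ would have to share the two-point set $\{4,8\}$, so one of them misses it; hence $\mathcal B(T_2)$ contains no $3$-partition. But $|\mathcal B(T_2)|=44>40=n(n-k)$. So the inequality you ``extract'' from Proposition~\ref{prop:4.4} simply does not hold, and the alleged uniqueness of the extremal family is wrong as well. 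Proposition~\ref{prop:4.4} only controls $\sum_i b_{\ell_i}$ for a single composition at a time; summing such bounds cannot beat what $\mathcal B(T_2)$ already achieves.

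\textbf{The inductive binomial inequality is false.} With $s=2$, $k=3$, $n=8$ and $|S|=k-1=2$, your scheme gives at most $2^8-2^6=192$ for the sets meeting $S$, and for $\mathcal G$ on $6$ elements (padded by $r=1$ to reach $7=4\cdot 2-1$) it gives $|\mathcal G|\le 2^{-1}|\mathcal D(7,1)|=32$. The total $224$ exceeds $|\mathcal D(8,2)|=219$, so the inequality you said remained ``to be verified'' is not true in general. The crude bound $2^n-2^{\,n-|S|}$ on the sets meeting $S$ throws away exactly the structure one needs.

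\textbf{What the paper does instead.} The paper's tool is not Proposition~\ref{prop:4.4} but a \emph{twisted} circle: for $s=2$ one works not with $\mathcal A(n,k+1)$ but with the family $\mathcal B(n,k+1)=\{A_k(x_i)\cup\{x_{i-k}\}\}$ of $(k+1)$-sets that are an arc plus one separated point. This yields Lemma~\ref{lem:8.4}, and after averaging the level inequality
\[
\frac{f_{k-1}}{\binom{n}{k-1}}+\frac{f_k}{\binom{n}{k}}+\frac{f_{k+1}}{\binom{n}{k+1}}\le 2
\]
of Proposition~\ref{prop:8.3}. That weighted three-level bound (rather than a global arc count) is the main tool; Theorem~\ref{th:8.2} is then completed in \cite{FK1} by combining it with its analogues at other levels. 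The standard arc families cannot produce this inequality --- the point of the twist is precisely to couple level $k-1$ with levels $k$ and $k+1$ through partitions of the right shape.
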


The main tool of the proof is an inequality which we proved by a slight twist of the Katona Circle.
To make the proof transparent let us present it for the special case $s = 2$.
Let us use the notation $f_\ell = \left|\mathcal F \cap {[n]\choose \ell}\right|$, $0 \leq \ell \leq n$.

\setcounter{proposition}{2}
\begin{proposition}
\label{prop:8.3}
Suppose that $n = 3k - 1$, $k \geq 2$.
Let $\mathcal F \subset 2^{[n]}$ be a family that does not contain three pairwise disjoint sets whose union is $[n]$.
Then
\beq
\label{eq:8.1}
\frac{f_{k - 1}}{{n\choose k - 1}} + \frac{f_k}{{n\choose k}} + \frac{f_{k + 1}}{{n \choose k + 1}} \leq 2.
\eeq
\end{proposition}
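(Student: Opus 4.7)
The plan is to prove (8.1) by a Katona-Circle averaging argument tailored to the identity $n = 3k-1$. Let $\pi$ be a uniformly random cyclic permutation of $[n]$ and, for $\ell \in \{k-1, k, k+1\}$, let $x_\ell(\pi)$ denote the number of arcs of length $\ell$ under $\pi$ that lie in $\mathcal F$. By Lemma~\ref{lem:1.1}, $\mathbb E_\pi[x_\ell] = n f_\ell/\binom{n}{\ell}$, so the statement is equivalent to $\mathbb E_\pi[x_{k-1} + x_k + x_{k+1}] \le 2n$. The critical combinatorial fact is that the only size multisets $\{a_1, a_2, a_3\} \subset \{k-1, k, k+1\}$ with $a_1+a_2+a_3 = n$ are $\{k-1, k, k\}$ (``type A'') and $\{k-1, k-1, k+1\}$ (``type B''). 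Hence every cyclic $\pi$ admits exactly $n$ cyclic partitions of $[n]$ of each type, giving $2n$ ``bad partitions'', in each of which the no-3-partition hypothesis forbids all three arcs from lying in $\mathcal F$.

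Double-counting pairs (bad partition, arc inside it that lies in $\mathcal F$), and using that each $(k-1)$-arc lies in one type-A and two type-B partitions, each $k$-arc in two type-A partitions, and each $(k+1)$-arc in one type-B partition, I obtain the pointwise circular inequalities
\begin{equation*}
x_{k-1}(\pi) + 2 x_k(\pi) \le 2n \quad\text{and}\quad 2 x_{k-1}(\pi) + x_{k+1}(\pi) \le 2n.
\end{equation*}
Averaging over $\pi$ produces the two linear bounds $a + 2b \le 2$ and $2a + c \le 2$, where $a, b, c$ are the three summands of (8.1). So far this is just the direct Katona Circle and is not enough: these two inequalities alone admit, e.g., $(a, b, c) = (\tfrac{1}{2}, \tfrac{3}{4}, 1)$, where $a + b + c = \tfrac{9}{4} > 2$.

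The ``slight twist'' now enters to rule out such solutions. I plan to use the Circular Kruskal--Katona Theorem~\ref{th:2.3} applied simultaneously to $\mathcal B_{k-1}(\pi)$ and $\mathcal B_{k+1}(\pi)$: whenever these are strictly smaller than $\mathcal A(n, k-1)$ and $\mathcal A(n, k+1)$, the immediate shade $\partial^+\mathcal B_{k-1}(\pi)$ and immediate shadow $\partial^-\mathcal B_{k+1}(\pi)$ are strictly larger than the originals and live in $\mathcal A(n, k)$. The no-3-partition hypothesis forces each element of $\mathcal A(n, k)$ that belongs to either shade/shadow \emph{and} to $\mathcal B_k(\pi)$ to participate in a bad type-A or type-B triple in a tightly controlled way. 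Combining this with the two linear inequalities from the bad partitions and averaging carefully over $\pi$ should collapse the spurious slack in the linear bounds and yield $\mathbb E_\pi[x_{k-1} + x_k + x_{k+1}] \le 2n$, which is the circular form of (8.1).

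The main obstacle is precisely this last combination step. Unlike the classical Katona Circle proofs (of EKR, Sperner, and even the butterfly-free bound), the pointwise inequality $x_{k-1}(\pi) + x_k(\pi) + x_{k+1}(\pi) \le 2n$ genuinely fails for some $\pi$: explicit circular arrangements achieve sum $2n+1$. Thus the twist must exploit the smoothing effect of averaging over $\pi$, using the Circular Kruskal--Katona overlap estimates to show that points where the sum exceeds $2n$ are always counterbalanced by points where it falls short. Getting the overlap quantitative enough to absorb the slack is the technical heart of the proof.
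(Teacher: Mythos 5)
Your preliminary analysis is correct and well targeted: the only size-triples from $\{k-1,k,k+1\}$ summing to $n=3k-1$ are $\{k-1,k,k\}$ and $\{k-1,k-1,k+1\}$; the double counting does give the pointwise inequalities $x_{k-1}+2x_k\le 2n$ and $2x_{k-1}+x_{k+1}\le 2n$; and these two alone cannot yield $x_{k-1}+x_k+x_{k+1}\le 2n$ (one sees this by LP duality, or from your counterexample, or from an explicit family such as, for $n=5$, $k=2$, all $3$-arcs, all $2$-arcs except $\{4,5\}$, and the singletons $\{1\},\{3\}$, giving sum $11>10$). But the proposal then stalls at exactly the step that matters. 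The plan to ``exploit the smoothing effect of averaging over $\pi$'' via Circular Kruskal--Katona is not carried out, and you concede this yourself. As stated this is a genuine gap, not a minor technicality: no quantitative mechanism is given by which the excess at some $\pi$ is offset by a deficit at others.

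The paper's ``slight twist'' is different in kind, and it sidesteps the averaging difficulty entirely. Instead of weakening the pointwise statement and trying to rescue it on average, one \emph{changes the $(k+1)$-family} so that a pointwise bound becomes true. Define $B_{k+1}(x_i)=A_k(x_i)\cup\{x_{i-k}\}$; since $n=3k-1$ this is a $(k+1)$-set consisting of a $k$-arc plus one isolated point ``opposite'' it, and $\mathcal B(n,k+1)=\{B_{k+1}(x_i):1\le i\le n\}$ replaces $\mathcal A(n,k+1)$. The decisive new ingredient is that $B_{k+1}(x_i)$, $A_{k-1}(x_{i-k+1})$, $A_{k-1}(x_{i+k})$ partition $[n]$, which together with the $\{k-1,k,k\}$ circular partitions supports an injection argument proving, for \emph{every} permutation $\pi$,
\begin{equation*}
|\mathcal F\cap\mathcal A(n,k-1)|+|\mathcal F\cap\mathcal A(n,k)|+|\mathcal F\cap\mathcal B(n,k+1)|\le 2n .
\end{equation*}
Since a uniformly random $\pi$ makes $B_{k+1}(x_i)$ a uniformly random $(k+1)$-subset, the usual averaging then yields \eqref{eq:8.1} directly. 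So the fix is a new counting family, not a refined averaging; without that idea your proof does not close, and the Circular Kruskal--Katona route you sketch has not been made to work.
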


To prove \eqref{eq:8.1} we consider a (cyclic) permutation $x_1, x_2, \ldots, x_n, x_1$, the families of arcs $\mathcal A(n, k - 1)$, $\mathcal A(n, k)$ along with a different type of family $\mathcal B(n, k + 1)$ that we define now.

Recall that $A_k(x_i)$ is the arc of length $k$ with head $x_i$ and tail $x_{i + k - 1}$.
Define $B_{k + 1}(x_i) = A_k(x_i) \cup \{x_{i - k}\}$, $1 \leq i \leq n$ and
$\mathcal B(n, k + 1) = \bigl\{B_{k + 1}(x_i): \, 1 \leq i\leq n\bigr\}$.

Note that $n = 3k - 1$ implies $x_{i - k} = x_{i + 2k - 1}$.
Consequently, the three sets $B_{k + 1}(x_i)$, $A_{k - 1} (x_{i - k + 1})$, $A_{k - 1}(x_{i + k})$ form a partition of $[3k - 1]$.

\setcounter{lemma}{3}
\begin{lemma}
\label{lem:8.4}
For every permutation $\pi = (x_1, \ldots, x_n)$
\beq
\label{eq:8.2}
|\mathcal F\cap \mathcal A(n, k - 1)| + |\mathcal F \cap \mathcal A(n, k)| + |\mathcal F \cap \mathcal B(n, k + 1)| \leq 2n.
\eeq
\end{lemma}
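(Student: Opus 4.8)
The plan is to exploit the partition structure noted just before the lemma: for each $1 \le i \le n$, the three arcs $B_{k+1}(x_i)$, $A_{k-1}(x_{i-k+1})$, $A_{k-1}(x_{i+k})$ form a partition of $[n] = [3k-1]$. Since $\mathcal F$ contains no three pairwise disjoint sets whose union is $[n]$, at least one of these three sets fails to lie in $\mathcal F$. The idea is to count, over all $i$, how many ``slots'' of each type are thereby forced to be empty, and to compare this with the total number of slots available.

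First I would set up the bookkeeping carefully. There are $n$ values of $i$, and each value of $i$ yields a forbidden triple $\bigl(B_{k+1}(x_i),\,A_{k-1}(x_{i-k+1}),\,A_{k-1}(x_{i+k})\bigr)$; for each such triple at least one coordinate is a ``missing'' set (not in $\mathcal F$). This gives at least $n$ incidences of the form (value of $i$, a missing set in its triple). The key observation is to determine, for each arc $A \in \mathcal A(n,k-1)$ and each $B \in \mathcal B(n,k+1)$, in how many triples it appears. Each $B_{k+1}(x_i)$ appears in exactly one triple (the one indexed by $i$). Each $(k-1)$-arc $A_{k-1}(x_j)$ appears as the ``left'' arc $A_{k-1}(x_{i-k+1})$ for exactly one $i$ (namely $i = j+k-1$) and as the ``right'' arc $A_{k-1}(x_{i+k})$ for exactly one $i$ (namely $i = j-k$); so each $(k-1)$-arc appears in exactly two triples. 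Hence if we let $p$ be the number of $(k-1)$-arcs not in $\mathcal F$, $q$ the number of $k$-arcs not in $\mathcal F$ (these do not occur in the triples at all — wait, they do not), and $r$ the number of $B$-sets not in $\mathcal F$, then the total incidence count is at most $2p + r$ (the $k$-arcs are irrelevant to the triples), and this must be at least $n$. That alone is not quite enough, so the real work is to bring the $k$-arcs into play.

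The hard part will be handling the middle layer $\mathcal A(n,k)$, since the $k$-arcs do not appear in the partition triples at all; a second family of partition-type triples (or a shadow/shade argument in the spirit of Theorem~\ref{th:2.3}) must be invoked to control $|\mathcal F \cap \mathcal A(n,k)|$. I would look for a complementary system of triples — for instance one pairing each $A_k(x_i)$ with two other sets (of sizes $k$ and $k-1$, or $k$ and $k+1$) that partition $[3k-1]$ — and run the same incidence count, then add the two inequalities. Concretely, $A_k(x_i)$, $A_k(x_{i+k})$, $A_{k-1}(x_{i+2k})$ partition $[3k-1]$, giving a triple in which $k$-arcs appear (with multiplicity two) and $(k-1)$-arcs appear (with multiplicity one); running the incidence count on these $n$ triples yields $2q' + p' \ge n$ where $q'$ counts missing $k$-arcs and $p'$ missing $(k-1)$-arcs. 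Adding this to the first inequality and regrouping, together with the trivial bounds $p \le n$, $q \le n$, $r \le n$, should give $2p + 2q + 2r \ge 2n$ in the appropriate weighted form, i.e. $(n - |\mathcal F \cap \mathcal A(n,k-1)|) + (n - |\mathcal F \cap \mathcal A(n,k)|) + (n - |\mathcal F \cap \mathcal B(n,k+1)|) \ge n$, which is exactly \eqref{eq:8.2} after rearranging. I expect the bookkeeping of multiplicities in the two triple-systems, and checking that the weights combine to give precisely the factor $2n$ rather than something weaker, to be the delicate point; everything else is routine counting on the cycle.
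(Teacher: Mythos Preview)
Your two incidence inequalities are both valid: with $p,q,r$ the numbers of missing $(k-1)$-arcs, $k$-arcs, and $B$-sets respectively, the first triple system gives $2p + r \ge n$ and the second gives $p + 2q \ge n$. However, these two linear inequalities do \emph{not} imply the target $p + q + r \ge n$. The values $p = n/2$, $q = n/4$, $r = 0$ satisfy both $2p+r \ge n$ and $p+2q \ge n$ yet give $p+q+r = 3n/4 < n$, so no nonnegative combination of your two inequalities can yield the desired bound. (The ``trivial bounds $p,q,r \le n$'' you invoke are upper bounds and cannot help establish a lower bound on $p+q+r$.) Pure incidence counting loses exactly the information you need, because it records only \emph{how many} sets are missing in each triple, not \emph{which} ones and how those choices correlate across neighbouring triples.

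The paper's argument is finer. Writing $R = \{i : A_{k-1}(x_i)\in\mathcal F\}$, $\mathcal S = \mathcal A(n,k)\setminus\mathcal F$, $\mathcal T = \mathcal B(n,k+1)\setminus\mathcal F$, the goal becomes $|R| \le |\mathcal S| + |\mathcal T|$, and one builds an explicit injection $\varphi\colon R \to \mathcal S \cup \mathcal T$. The crucial case split is whether, for a given $i\in R$, the shifted index $i-k$ also lies in $R$. If it does, then both $(k-1)$-arcs in your first-type triple belong to $\mathcal F$, forcing the $B$-set out, and $\varphi(i)$ is that $B$-set. If it does not, one turns instead to the partition $A_{k-1}(x_i)\cup A_k(x_{i-k})\cup A_k(x_{i+k-1})=[n]$ (your second system, re-indexed) to locate a missing $k$-arc, with a priority rule between the two candidates to ensure injectivity. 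It is precisely this interlocking of the two partition families at the level of individual indices, rather than in the aggregate, that makes the argument go through.
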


Once \eqref{eq:8.2} is proved, \eqref{eq:8.1} follows in exactly the same way as before.
E.g., as \eqref{eq:1.2} followed from \eqref{eq:1.3}.

\begin{proof}[Proof of \eqref{eq:8.2}]
Let us define $R = \bigl\{i : A_{k - 1}(x_i) \in \mathcal F\bigr\}$, $\mathcal S = \mathcal A(n, k) \setminus \mathcal F$, $\mathcal T = \mathcal B(n, k + 1)\setminus \mathcal F$.
With this notation the LHS of \eqref{eq:8.2} is
$$
|R| + (n - |\mathcal S|) + (n - |\mathcal T|) = 2n + |R| - (|\mathcal S| + |\mathcal T|).
$$
That is, we have to prove
\beq
\label{eq:8.3}
|R| \leq |\mathcal S| + |\mathcal T|.
\eeq

To achieve this we are going to define an injection $\varphi : R \to \mathcal S \cup \mathcal T$.
For this purpose let us partition $R$ into $R = R_0 \cup R_1$ where $i \in R_0$ iff $i \in R$ but $i - k \notin R$, $R_1 = R \setminus R_0$.

For $i \in R_1$ consider the three sets $A_{k - 1}(x_i)$, $A_{k - 1}(x_{i - k})$, $B_{k + 1}(x_{i + k})$.
Since these form a partition of $[n]$ and the first two sets are in $\mathcal F$, $B_{k + 1}(x_{i + k}) \notin \mathcal F$.
Define $\varphi(i) = B_{k + 1}(x_{i + k})$.

For $i \in R_0$ consider the three sets $A_{k - 1}(x_i)$, $A_k(x_{i - k})$, $A_k(x_{i + k - 1})$.
Since these form a partition of $[n]$, at least one of the two $k$-arcs is in $\mathcal S$.
Define $\varphi(i) = A_k(x_{i - k})$ if $A_k(x_{i - k}) \notin \mathcal F$ and
$\varphi(i) = A_k(x_{i + k - 1})$ if $A_k(x_{i - k}) \in \mathcal F$ but $A_k(x_{i + k - 1}) \notin \mathcal F$.

To conclude the proof we only have to show that the same member of $\mathcal S$ is never assigned to two distinct $x_i$.

Noting $i + k - 1 \equiv i - 2k$ $(\text{\rm mod }3k - 1)$, we see that if $A_k(x_{i + k - 1})$ is assigned to two $x_j$'s then $j = i$ and $j' = i - k$.
However, $x_{i - k} \notin R$.
That is, nothing is assigned to $x_{i - k}$.
This concludes the proof of \eqref{eq:8.3} and thereby \eqref{eq:8.2}.
\end{proof}

\section{The Erd\H{o}s Matching Conjecture for the circle}
\label{sec:9}

Recall that for a family $\mathcal F \subset 2^{[n]}$ the matching number $\nu(\mathcal F)$ is the maximum number of pairwise disjoint members in $\mathcal F$.
E.g., $\nu(\mathcal F) = 1$ if and only if $\mathcal F$ is non-empty and intersecting.

There are two natural constructions due to Erd\H{o}s \cite{E2} preventing matchings of size $r + 1$.
The first is the complete $k$-graph $\mathcal K(k(r + 1) - 1, k) := {[k(r + 1) - 1]\choose k}$.
The second is $\mathcal L(n, k, r) := \left\{L \in {[n]\choose k} :\, L \cap [r] \neq \emptyset\right\}$.

Let us state the \emph{Erd\H{o}s Matching Conjecture} or EMC for short:

\begin{conjecture}[\cite{E2}]
\label{con:9.1}
\ Suppose that $n, k, r$ are positive integers, $n \geq$
$ k(r + 1)$.
Let $\mathcal F \subset {[n]\choose k}$ satisfy $\nu(\mathcal F) \leq r$.
Then
\beq
\label{eq:9.1}
\nu(\mathcal F) \leq \max \left\{{k(r + 1) - 1\choose k}, {n \choose k} - {n - r\choose k} \right\}.
\eeq
\end{conjecture}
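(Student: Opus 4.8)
\medskip

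The plan is to combine the two workhorses of the area, \emph{compression} and \emph{averaging over Katona's Circle}, and then to see exactly how far the Circle alone can carry us. First I would apply the shifting operators $S_{ij}$, $1\le i<j\le n$, repeatedly; each leaves $|\mathcal F|$ unchanged and cannot increase $\nu(\mathcal F)$, so it suffices to prove \eqref{eq:9.1} for a shifted family. A shifted family with $\nu(\mathcal F)\le r$ enjoys the dichotomy that either every $F\in\mathcal F$ meets $[r]$, in which case $|\mathcal F|\le\binom nk-\binom{n-r}k$ and we are done, or $\mathcal F$ already contains an $(r+1)$-matching supported in a bounded window, which should drive an induction on $r$: delete a point $x$ of a transversal of a maximal matching and compare the residual family on $[n]\setminus\{x\}$ against the complete-graph term $\binom{kr-1}{k}$ of the conjecture for $r-1$.

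For the main estimate I would feed a \emph{circular matching bound} into Lemma~\ref{lem:1.1}: if $\mathcal B\subset\mathcal A(n,k)$ has $\nu(\mathcal B)\le r$ and $n\ge k(r+1)$, then $|\mathcal B|\le\nu(\mathcal B)\,k\le rk$. This is proved in the spirit of Theorem~\ref{th:3.2} --- fix a maximum matching $B_1,\dots,B_t$ among the arcs of $\mathcal B$, note that every other arc has its head or its tail in $B_1\cup\dots\cup B_t$, and pair up the at most $2tk$ admissible head/tail positions using $n\ge k(r+1)$; one can also induct on $r$ through the circular Kruskal--Katona Theorem~\ref{th:2.3}. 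Taking $\mathcal G=\mathcal A(n,k)$ (so $|\mathcal G|=n$) in \eqref{eq:1.1} and using $|\mathcal F\cap\pi(\mathcal A(n,k))|\le rk$ for every $\pi$ gives
$$|\mathcal F|\ \le\ \frac{rk}{n}\binom nk\ =\ r\binom{n-1}{k-1}.$$
The pleasant point is that at $n=k(r+1)$ one has $r\binom{n-1}{k-1}=\binom{n-1}{k}=\binom{k(r+1)-1}{k}$, so this bare averaging already settles \eqref{eq:9.1} exactly at the boundary $n=k(r+1)$ --- just as, for $r=1$, it is sharp for every $n\ge 2k$ and recovers Erd\H{o}s--Ko--Rado.

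The main obstacle is everything beyond that boundary. For $r\ge 2$ and $n>k(r+1)$ the quantity $r\binom{n-1}{k-1}$ strictly exceeds $\binom nk-\binom{n-r}k=\sum_{i=0}^{r-1}\binom{n-1-i}{k-1}$, and uniform averaging over the Circle --- which only ever sees the single ratio $rk/n$ --- cannot remove the gap; the discrepancy is worst precisely in the range $k(r+1)<n<(2r+1)k$, which is exactly where the Erd\H{o}s Matching Conjecture is still open (for $n\ge(2r+1)k-r$ it is a theorem of Frankl). To beat $rk/n$ one would have to replace $\mathcal G=\mathcal A(n,k)$ in Lemma~\ref{lem:1.1} by a carefully \emph{weighted} test family interpolating between the two extremal configurations $\mathcal K(k(r+1)-1,k)$ and $\mathcal L(n,k,r)$ --- equivalently, to prove the correct stability version of the circular bound. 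I expect constructing and analysing such a $\mathcal G$ to be the genuinely hard step; it is exactly what the delta-system and spread/junta arguments of Frankl and Frankl--Kupavskii are built to carry out, and the Circle by itself does not seem to replace them.

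Consequently, what Katona's Circle delivers cleanly here is the circular analogue of the conjecture --- the sharp bound $|\mathcal B|\le rk$ for $\mathcal B\subset\mathcal A(n,k)$ with $\nu(\mathcal B)\le r$ and $n\ge k(r+1)$ --- together with the boundary case $n=k(r+1)$ of \eqref{eq:9.1} itself, a fact that drops out equally from Baranyai's theorem, since $\binom{[k(r+1)]}{k}$ decomposes into perfect matchings, each an $(r+1)$-matching from which $\mathcal F$ must omit a set; the cases $k(r+1)<n<(2r+1)k$ with $r\ge 2$ remain open.
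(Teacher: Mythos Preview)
The statement is Conjecture~\ref{con:9.1}, the Erd\H{o}s Matching Conjecture, which the paper does \emph{not} prove (it calls the full range ``hopelessly difficult''). Your proposal correctly does not claim to prove it either; you arrive at exactly the paper's conclusion: Katona's Circle delivers the sharp circular bound $|\mathcal B|\le rk$ for $\mathcal B\subset\mathcal A(n,k)$ with $\nu(\mathcal B)\le r$ (this is Theorem~\ref{th:9.3}), and averaging then gives $|\mathcal F|\le r\binom{n-1}{k-1}$, which matches the conjecture only at $n=k(r+1)$ and is too weak beyond. The paper says the same thing explicitly right after Theorem~\ref{th:9.3}.

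One point of difference concerns the proof of the circular bound itself. Your sketch --- fix a maximum matching $B_1,\dots,B_t$, observe every other arc has head or tail in $B_1\cup\dots\cup B_t$, and ``pair up'' the $2tk$ positions --- is Katona's $r=1$ argument, but the pairing step does not carry over: for $r=1$ two disjoint arcs cannot both lie in $\mathcal B$, whereas for $r\ge 2$ they can, so the arc with tail $x_i$ and the arc with head $x_{i+1}$ may both be present. The paper's proof (Lemma~\ref{lem:9.4}) is different and cleaner: take any $k(r+1)$ arcs, list their heads $y_1<\dots<y_{k(r+1)}$, and observe that for each $1\le j\le k$ the arcs with heads $y_j,y_{j+k},\dots,y_{j+rk}$ form an $(r+1)$-matching; hence any $\mathcal G_0$ with $\nu(\mathcal G_0)\le r$ misses at least one arc from each of these $k$ groups, giving $|\mathcal G_0|\le kr$. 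This partition-into-matchings argument avoids the pairing issue entirely and is what you should use in place of the head/tail sketch.
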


The values on the RHS of \eqref{eq:9.1} correspond to $\bigl|\mathcal K(k(r + 1) - 1, k)\bigr|$ and $|\mathcal L(n, k, r)|$, respectively.
Let us note that \eqref{eq:9.1} is trivial for $r = 1$.
It was proved for $r = 2$ and $3$ by Erd\H{o}s and Gallai \cite{EG} and the present author \cite{F12}, respectively.

\setcounter{theorem}{1}
\begin{theorem}[\cite{F13}]
\label{th:9.2}
Suppose that $n > (2r + 1)k - r$ then \eqref{eq:9.1} is true and up to isomorphism $\mathcal L(n, k, r)$ is the only optimal family.
\end{theorem}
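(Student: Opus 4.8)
The plan is to establish Theorem~\ref{th:9.2} by compression (shifting) together with induction on $r$, working throughout with families that are down-closed under the shift partial order. First I would recall the shift $S_{ij}$ ($i<j$): whenever $F\in\mathcal F$ has $j\in F$, $i\notin F$ and $(F\setminus\{j\})\cup\{i\}\notin\mathcal F$, replace $F$ by $(F\setminus\{j\})\cup\{i\}$. This preserves $k$-uniformity and $|\mathcal F|$ and cannot increase $\nu(\mathcal F)$ (a matching produced after shifting can be pulled back through the reversed shifts), so after finitely many steps we may assume $\mathcal F$ is \emph{shifted}. The base case $r=1$ is precisely the Erd\H{o}s--Ko--Rado Theorem (Theorem~\ref{th:3.1}): since $n>3k-1\ge 2k$ we get $|\mathcal F|\le\binom{n-1}{k-1}=|\mathcal L(n,k,1)|$, with equality only for a star, i.e.\ $\mathcal L(n,k,1)$ up to isomorphism because $n>2k$. (I would not try to route the argument through a circular analogue and Lemma~\ref{lem:1.1}, since averaging over permutations does not seem to recover the sharp non-circular bound here.)

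For the inductive step fix $r\ge 2$, assume the theorem for $r-1$, and split $\mathcal F$ at the element $1$: put $\mathcal F_{\bar 1}=\{F\in\mathcal F:1\notin F\}\subset\binom{\{2,\dots,n\}}{k}$ and $\mathcal F_1=\{F\setminus\{1\}:1\in F\in\mathcal F\}\subset\binom{\{2,\dots,n\}}{k-1}$, so $|\mathcal F|=|\mathcal F_{\bar 1}|+|\mathcal F_1|$. The straightforward case is $\nu(\mathcal F_{\bar 1})\le r-1$. Here $|\mathcal F_1|\le\binom{n-1}{k-1}$, and since $n-1>(2(r-1)+1)k-(r-1)$ (immediate from $n>(2r+1)k-r$ and $k\ge 1$), the induction hypothesis applied to $\mathcal F_{\bar 1}$ on the ground set $\{2,\dots,n\}$ gives $|\mathcal F_{\bar 1}|\le|\mathcal L(n-1,k,r-1)|$, the complete-graph term $\binom{kr-1}{k}$ being the smaller one in this range. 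Using the telescoping identity $|\mathcal L(n,k,r)|=\binom{n-1}{k-1}+|\mathcal L(n-1,k,r-1)|$ (equivalently $\binom{n}{k}-\binom{n-r}{k}=\binom{n-1}{k-1}+\bigl(\binom{n-1}{k}-\binom{n-r}{k}\bigr)$), we conclude $|\mathcal F|\le|\mathcal L(n,k,r)|$. Equality forces $\mathcal F_1=\binom{\{2,\dots,n\}}{k-1}$ (all $k$-sets through $1$ lie in $\mathcal F$) together with $\mathcal F_{\bar 1}$ optimal for $(n-1,k,r-1)$, hence isomorphic to $\mathcal L(n-1,k,r-1)$ by induction; since $n\ge(r+1)k$ one then checks that $\nu(\mathcal F)\le r$ actually forces $\nu(\mathcal F_{\bar 1})\le r-1$ (a matching of size $r$ inside $\{2,\dots,n\}$ would extend through $1$), and reassembling yields $\mathcal F\cong\mathcal L(n,k,r)$.

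The remaining case, $\nu(\mathcal F_{\bar 1})=r$ (deleting the element $1$ does not drop the matching number), is the combinatorial core of \cite{F13} and is where I expect the real difficulty to lie. Here $\mathcal F_{\bar 1}$ cannot be handled by the induction, so one uses shiftedness quantitatively: for a shifted family $\nu\le r$ is equivalent to $\{rk+1,\dots,(r+1)k\}\notin\mathcal F$, and down-closedness then forbids every $k$-set that dominates a canonical matching coordinatewise. Combining this with $\nu(\mathcal F_{\bar 1})=r$ and iterating the deletion of the smallest element produces enough forbidden $k$-sets to push $|\mathcal F|$ strictly below $|\mathcal L(n,k,r)|$; it is exactly at this point that the hypothesis $n>(2r+1)k-r$ is needed, to make the count of discarded sets outweigh the deficit $\binom{n-1}{k-1}-|\mathcal F_1|$. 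The main obstacle is carrying out this estimate with the correct bookkeeping on the forbidden sets; the outcome is that this case contributes no optimal family.

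Finally, the uniqueness assertion needs an ``un-shifting'' step, since shifting alters $\mathcal F$ without changing $|\mathcal F|$ or $\nu$. One checks that $\mathcal L(n,k,r)=\{F:F\cap[r]\ne\emptyset\}$ is itself shifted and, by the equality analyses above, is the unique shifted optimum; and that whenever a single non-trivial shift applied to an optimal family yields something isomorphic to $\mathcal L(n,k,r)$, the original family was already isomorphic to it (here $n>2k$ rules out the degenerate reversals). Chaining this backwards along the shifting sequence shows that every optimal $\mathcal F$ is isomorphic to $\mathcal L(n,k,r)$, completing the proof.
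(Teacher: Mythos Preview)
The paper does not contain a proof of Theorem~\ref{th:9.2}. It is quoted as a result from \cite{F13} and then immediately set aside (``However, to solve EMC for the full range appears to be hopelessly difficult. For Katona's Circle we can get a complete solution.''); the surrounding section proves only the circular analogue, Theorem~\ref{th:9.3}. So there is no proof here to compare your proposal against.

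As for the proposal itself: your outline (shift, induct on $r$, split at the element $1$, handle the easy case $\nu(\mathcal F_{\bar 1})\le r-1$ by induction plus the telescoping identity) is indeed the scaffolding of \cite{F13}. But what you submit is a sketch, not a proof. You explicitly label the case $\nu(\mathcal F_{\bar 1})=r$ as ``the combinatorial core'' and then say only that shiftedness ``produces enough forbidden $k$-sets'' and that ``the main obstacle is carrying out this estimate with the correct bookkeeping.'' That obstacle is the entire content of the theorem; the rest is routine. In \cite{F13} this case is handled by a specific inequality (bounding $|\mathcal F|$ via the degrees $d_1\ge\cdots\ge d_n$ of a shifted family with $\nu\le r$, using that $\{1,\ldots,k\},\{k+1,\ldots,2k\},\ldots$ cannot all lie in $\mathcal F$), and the condition $n>(2r+1)k-r$ enters precisely to make that inequality bite. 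Your proposal neither states nor proves any such inequality, so as written it does not establish the theorem.

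Also, your uniqueness argument in the easy case has a circularity: you first \emph{assume} $\nu(\mathcal F_{\bar 1})\le r-1$ to get equality, and then argue that ``$\nu(\mathcal F)\le r$ actually forces $\nu(\mathcal F_{\bar 1})\le r-1$'' as part of the equality analysis. The second claim is false in general (it would make your hard case empty); what you need is that \emph{given} equality and $\mathcal F_1=\binom{[2,n]}{k-1}$, the family $\mathcal F_{\bar 1}$ is pinned down, and that should be argued directly rather than via a matching-number drop.
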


Let us note that for $r$ sufficiently large the same was proved for $n > \frac53 kr$ by Kupavskii and the author \cite{FK2}.

However, to solve EMC for the full range appears to be hopelessly difficult.

For Katona's Circle we can get a complete solution.

\begin{theorem}
\label{th:9.3}
Let $n, k, r$ be positive integers, $n \geq k(r + 1)$.
Suppose that $\mathcal G \subset \mathcal A(n, k)$ and $\nu(\mathcal G) \leq r$.
Then
\beq
\label{eq:9.2}
|\mathcal G| \leq kr.
\eeq
\end{theorem}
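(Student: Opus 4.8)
The plan is to argue that a large family $\mathcal G \subset \mathcal A(n,k)$ with $\nu(\mathcal G) \le r$ must in fact have few arcs, by exhibiting enough pairwise disjoint arcs whenever $|\mathcal G|$ exceeds $kr$. The natural structure to exploit is the graph $G(\mathcal G)$ introduced before Proposition~\ref{prop:2.4}: since $|\mathcal G| \le n$ we may assume $\mathcal G \subsetneq \mathcal A(n,k)$, so $G(\mathcal G)$ is a disjoint union of paths (``runs'' of consecutive $k$-arcs) and isolated vertices. Write $\mathcal G$ as the union of $\lambda = \lambda(\mathcal G)$ such runs $P_1, \ldots, P_\lambda$, where run $P_i$ consists of $m_i$ consecutive arcs $A_k(a_i), A_k(a_i+1), \ldots, A_k(a_i+m_i-1)$, so that $|\mathcal G| = \sum_i m_i$. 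The key geometric observation is that a single run of $m$ consecutive $k$-arcs on the $n$-circle ``occupies'' an interval of $m + k - 1$ consecutive points, and within such a run one can always select $\lceil m/k\rceil$ pairwise disjoint arcs (greedily: take the head arc, jump $k$ steps, repeat).

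From here I would set up a counting/packing argument. First I would show that if $\sum_i m_i \ge kr + 1$ then one can assemble $r+1$ pairwise disjoint arcs drawn from the runs, contradicting $\nu(\mathcal G) \le r$. The clean way to do this: each run $P_i$ yields a ``budget'' of $\lfloor m_i / k \rfloor$ guaranteed disjoint arcs entirely inside $P_i$'s interval, plus there is slack to use the leftover $m_i \bmod k$ arcs across runs since distinct runs occupy disjoint point-intervals. More carefully, since $n \ge k(r+1)$, a maximal collection of pairwise disjoint $k$-arcs has at most $r$ members, and I would convert the constraint ``no $r+1$ pairwise disjoint arcs'' into an upper bound on $\sum m_i$ by a fractional relaxation: assign to each arc $A_k(x)$ weight $1/k$; then any arc is ``charged'' by the $\le k$ disjoint-arc positions it blocks, and a clearing argument shows the total weight $|\mathcal G|/k$ cannot exceed $r$, i.e.\ $|\mathcal G| \le kr$. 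The boundary case where $|\mathcal G| = n$ needs the trivial remark that $n \ge k(r+1) > kr$ is then impossible unless $\mathcal G$ has a matching of size $r+1$ (partition the circle into $r+1$ arcs of length $\ge k$), so that case is consistent.

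The step I expect to be the main obstacle is making the ``runs combine across the circle'' argument rigorous without double-counting — i.e.\ proving cleanly that $\lfloor m_1/k\rfloor + \cdots + \lfloor m_\lambda/k\rfloor$, possibly augmented by one extra arc harvested from the combined fractional parts, is at least $\lceil (\sum m_i)/k \rceil$ whenever the runs lie in pairwise disjoint intervals of the $n$-circle and $n$ is large enough. A natural fix is induction on $\lambda$: peel off one run, delete its occupied interval of $m_i + k - 1$ points (which shortens the circle to a path), and recurse; the base case $\lambda = 1$ is the elementary ``$\lceil m/k \rceil$ disjoint arcs in a run'' fact. One has to track carefully that deleting a run's interval still leaves room for the inductively found matching — this is where the hypothesis $n \ge k(r+1)$ is used, since it guarantees the total occupied length $\sum (m_i + k - 1) \le kr + \lambda(k-1)$ leaves enough circle. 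Once that combinatorial lemma is in place, $|\mathcal G| > kr$ forces $\nu(\mathcal G) \ge r+1$, and \eqref{eq:9.2} follows.
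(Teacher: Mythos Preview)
Your approach has a genuine gap at precisely the step you flag as the main obstacle. The claim that ``distinct runs occupy disjoint point-intervals'' is false: if $A_k(a), A_k(a+2) \in \mathcal G$ but $A_k(a+1) \notin \mathcal G$, then $A_k(a)$ and $A_k(a+2)$ lie in different runs yet share $k-2$ points. Hence harvesting $\lceil m_i/k\rceil$ disjoint arcs from each run separately does not yield a matching across runs. Neither proposed fix repairs this. The fractional relaxation amounts to observing that $x_A = 1/k$ is a feasible fractional matching of value $|\mathcal G|/k$, but to conclude $|\mathcal G|/k \le r$ you would need the fractional matching number to coincide with the integral one; on the circle this can fail (e.g.\ $\mathcal A(5,2)$ has $\nu = 2$ but fractional value $5/2$), and you have not explained why the hypothesis $n \ge k(r+1)$ closes that integrality gap. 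The induction on $\lambda$ fails for the overlap reason just noted: deleting one run's interval of $m_i + k - 1$ points will in general delete points used by arcs of neighbouring runs, so those arcs vanish from the recursion and your count of available disjoint arcs drops.

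The paper's argument avoids runs entirely and is much shorter. If $|\mathcal G| > kr$, choose any $\mathcal B \subset \mathcal A(n,k)$ with $|\mathcal B| = k(r+1)$ and $|\mathcal B \cap \mathcal G| > kr$ (extend or restrict $\mathcal G$ as needed; this uses only $n \ge k(r+1)$). List the heads of $\mathcal B$ in cyclic order as $y_1 < \cdots < y_{k(r+1)}$ and for $1 \le j \le k$ set $\mathcal B_j = \{A_k(y_j), A_k(y_{j+k}), \ldots, A_k(y_{j+rk})\}$. Any $k$ consecutive heads among the $y_i$ span at least $k$ positions on the circle, so each $\mathcal B_j$ consists of $r+1$ pairwise disjoint arcs; thus $|\mathcal B_j \cap \mathcal G| \le r$, and summing over $j$ gives $|\mathcal B \cap \mathcal G| \le kr$, a contradiction. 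The decisive idea you are missing is this ``every $k$-th head'' partition of an \emph{arbitrary} $k(r+1)$-element arc family into $k$ matchings of size $r+1$; it sidesteps all the bookkeeping about how $\mathcal G$ clusters into runs.
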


Let us note that \eqref{eq:9.2} generlizes the Circular Erd\H{o}s--Ko--Rado Theorem (Theorem \ref{th:3.2}).
The proof is in some sense even simpler.

Let us first prove an easy lemma.

\setcounter{lemma}{3}
\begin{lemma}
\label{lem:9.4}
Suppose that $\mathcal B \subset \mathcal A(n, k)$, $|\mathcal B| = k(r + 1)$.
If $\mathcal G_0 \subset \mathcal B$ satisfies $\nu(\mathcal G_0) \leq r$ then
\beq
\label{eq:9.3}
|\mathcal G_0| \leq kr.
\eeq
\end{lemma}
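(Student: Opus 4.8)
The plan is to show that $\mathcal B$ can be partitioned into $k$ subfamilies $\mathcal M_0,\dots ,\mathcal M_{k-1}$, each consisting of $r+1$ pairwise disjoint arcs. Granting this, the lemma is immediate: a family $\mathcal G_0\subseteq\mathcal B$ with $\nu(\mathcal G_0)\le r$ contains no $r+1$ pairwise disjoint arcs, hence cannot contain all of any $\mathcal M_c$, so it omits at least one arc from each of these $k$ disjoint subfamilies; thus $|\mathcal B\setminus\mathcal G_0|\ge k$ and $|\mathcal G_0|\le k(r+1)-k=kr$.

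To build the partition I would argue with heads. Since $1\le k<n$, distinct length-$k$ arcs have distinct heads, so the $k(r+1)$ arcs of $\mathcal B$ correspond to $k(r+1)$ distinct points of $[n]$; list these heads in cyclic order as $p_0,p_1,\dots ,p_{m-1}$ with $m=k(r+1)$. For $0\le c<k$ let $\mathcal M_c$ be the family of the $r+1$ arcs whose heads are $p_c,p_{c+k},p_{c+2k},\dots ,p_{c+rk}$. Because $(c,j)\mapsto c+jk$ is a bijection from $\{0,\dots ,k-1\}\times\{0,\dots ,r\}$ onto $\{0,\dots ,m-1\}$, the $\mathcal M_c$ partition $\mathcal B$ and each has exactly $r+1$ members.

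The heart of the matter is that each $\mathcal M_c$ is a matching. Here I would use the elementary fact that if $q_0,\dots ,q_r$ are points of the circle in cyclic order whose $r+1$ consecutive cyclic gaps are each at least $k$, then $A_k(q_0),\dots ,A_k(q_r)$ are pairwise disjoint: each $A_k(q_i)$ lies in the clockwise half-open interval from $q_i$ to $q_{i+1}$ (whose length is the gap, hence $\ge k$), and these $r+1$ intervals tile the circle. It therefore suffices to check that consecutive heads of $\mathcal M_c$, namely $p_{c+jk}$ and $p_{c+(j+1)k}$, are at cyclic distance at least $k$; but exactly the $k-1$ heads $p_{c+jk+1},\dots ,p_{c+jk+k-1}$ lie strictly between them, and for the wrap-around pair $p_{c+rk},p_c$ the number of heads strictly between is $m-rk-1=k-1$ as well, since $m=k(r+1)$. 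Hence every cyclic gap of $\mathcal M_c$ is $\ge k$, so $\mathcal M_c$ is indeed an $(r+1)$-matching.

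Combining the three steps gives the lemma. The only mildly delicate point is the head count between consecutive members of $\mathcal M_c$ (especially the wrap-around pair); the rest is bookkeeping, and no earlier result is invoked — though specialising to $r=1$ recovers the Circular Erd\H{o}s--Ko--Rado Theorem, and Lemma \ref{lem:9.4} yields Theorem \ref{th:9.3} at once, by padding any $\mathcal G\subseteq\mathcal A(n,k)$ with $\nu(\mathcal G)\le r$ out to a family $\mathcal B$ of size $k(r+1)$ (possible since $n\ge k(r+1)$) and applying the lemma with $\mathcal G_0=\mathcal G$.
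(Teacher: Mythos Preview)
Your proof is correct and follows essentially the same approach as the paper: both list the heads of $\mathcal B$ in cyclic order and group every $k$th one to form $k$ subfamilies of $r+1$ pairwise disjoint arcs, then observe that $\mathcal G_0$ misses at least one arc from each. Your $\mathcal M_c$ are exactly the paper's $\mathcal B_j$ (up to an index shift), and you supply the details for why each is a matching where the paper simply asserts ``it is easy to check''.
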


\begin{proof}
WLOG let $\pi = (1,2,\ldots, n)$.
Let $1 \leq y_1 < y_2 < \ldots < y_{k(r + 1)} \leq n$ in this order be the heads of the arcs in $\mathcal B$.
For $1 \leq j \leq k$ define $\mathcal B_j = \left\{A_k(y_j), A_k(y_{j + k}), \ldots, A_k(y_{j + rk})\right\}$.
It is easy to check that $\mathcal B_j$ consists of $r + 1$ pairwise disjoint $k$-arcs, implying
$$
|\mathcal G_0 \cap \mathcal B_j| \leq r.
$$
Summing this inequality for $1 \leq j \leq k$ yields \eqref{eq:9.3}.
\end{proof}

\begin{proof}[Proof of Theorem \ref{th:9.3}]
Suppose indirectly that $|\mathcal G| > kr$.
Then one can easily choose $\mathcal B \subset \mathcal A(n, k)$, $|\mathcal B| = k(r + 1)$ satisfying $|\mathcal B \cap \mathcal G| > kr$.
Setting $\mathcal G_0 = \mathcal B \cap \mathcal G$ we get a contradiction with Lemma \ref{lem:9.4}.
\end{proof}

We should mention that \eqref{eq:9.2} is best possible.
Namely, $n \geq k(r + 1) \geq kr$ enables us to choose $x_{i_1}, \ldots, x_{i_r}$ on the Katona Circle so that they are pairwise at least $k$ apart.
One possible choice is $\{x_{i_1}, \ldots, x_{i_r}\} = \{1, k + 1, \ldots, (r - 1)k + 1\}$ but there are many more non-isomorphic choices.
Then for a set $T$ one defines $\mathcal B_k(T) = \{B \in \mathcal A(n, k) : B \cap T \neq \emptyset\}$.
Clearly, $\nu(\mathcal B(T)) \leq |T|$.
With the choice $T = \{x_{i_1}, \ldots, x_{i_r}\}$, $|\mathcal B_k(T)| = rk$ holds as well.
Let us also mention that for $k > n/(r + 1)$, $\nu(\mathcal A(n, k)) \leq r$, i.e., the assumption $n \geq k(r + 1)$ is necessary.

In Section \ref{sec:3} we showed that the Erd\H{o}s--Ko--Rado Theorem follows from Katona's circular version of it.
However, in the case $r \geq 2$ this approach does not work.
The reason is that fixing an $r$-set $T \subset [n]$ and setting $\mathcal F_T = \left\{F \in {[n]\choose k}: F \cap T \neq \emptyset\right\}$ the size of $\mathcal F_T \cap \mathcal A(n, k) = \mathcal B_k(T)$ is not always $rk$.
Depending on the permutation $\pi$, it can be quite small.

Unlike for $2^{[n]}$, in the case of $\mathcal A(n)$ one can solve completely the corresponding non-uniform version as well.
To avoid cumbersome notation, let us present only the case $r = 2$.

Set $T_2 = \left\{\lfloor n/2\rfloor, n\right\}$ and define $\mathcal B(T_2) = \bigl\{B \in \mathcal A(n): B \cap T_2 \neq \emptyset\bigr\}$.
Define also $b_k = \bigl|\mathcal A(n, k) \cap \mathcal B(T_2)\bigr|$, $1 \leq k \leq n - 1$.

It is easy to check that $b_k = n$ for $k \geq \frac{n}{2}$ and $b_k = 2k$ for $1 \leq k < \frac{n}{2}$.
Thus
$$
\bigl|\mathcal B(T_2)\bigr| = \begin{cases}
2q^2 + q(q - 1) &\text{ if } \ n = 2q,\\
(2q + 1)q + q(q + 1) &\text{ if } \ n = 2q + 1.
\end{cases}
$$

\setcounter{theorem}{4}
\begin{theorem}
\label{th:9.5}
Suppose that $\mathcal G \subset \mathcal A(n)$, $n \geq 3$ and $\nu(\mathcal G) \leq 2$.
Then
\beq
\label{eq:9.4}
|\mathcal G| \leq \bigl|\mathcal B(T_2)\bigr|.
\eeq
\end{theorem}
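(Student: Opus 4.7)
The plan is to stratify $\mathcal G$ by arc length and attempt a layer-by-layer comparison with $\mathcal B(T_2)$. Set $g_k := |\mathcal G \cap \mathcal A(n,k)|$ and $b_k := |\mathcal B(T_2) \cap \mathcal A(n,k)|$, so that the goal becomes
\[
\sum_{k=1}^{n-1} g_k \;\leq\; \sum_{k=1}^{n-1} b_k \;=\; |\mathcal B(T_2)|.
\]
Recall $b_k = 2k$ for $k < n/2$ and $b_k = n$ for $k \geq n/2$, so it suffices to compare $g_k$ with these targets layer by layer.

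Two of the three length regimes are immediate. For $k \geq \lceil n/2\rceil$ the trivial estimate $g_k \leq |\mathcal A(n,k)| = n = b_k$ closes the case. For $k \leq \lfloor n/3\rfloor$ the $k$-uniform subfamily $\mathcal G \cap \mathcal A(n,k)$ inherits $\nu \leq 2$, and because $n \geq 3k = k(r+1)$ with $r = 2$, Theorem~\ref{th:9.3} gives $g_k \leq 2k = b_k$ directly. The delicate range is the middle one, $\lfloor n/3 \rfloor < k < \lceil n/2 \rceil$: three pairwise disjoint $k$-arcs simply cannot fit, so $\nu(\mathcal A(n,k)) \leq 2$ holds automatically, and a layer in this range may legitimately carry as many as $n$ arcs of $\mathcal G$, overshooting the target $b_k = 2k$ by up to $n - 2k < n/3$.

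For the middle range the plan is to show that any surplus $g_k - b_k$ is offset by a corresponding deficit of $\mathcal G$-arcs in some smaller layer. The mechanism is geometric: whenever $A, A' \in \mathcal G \cap \mathcal A(n,k)$ are disjoint with $k > n/3$, the complement $[n]\setminus(A \cup A')$ is the union of two arcs of total length $n - 2k < k$, and every $B \in \mathcal G$ contained entirely in this gap would complete a 3-matching with $\{A,A'\}$, hence is excluded. I would build an injection from ``excess'' middle-range arcs to ``forbidden'' small-range arcs using a canonical choice of witness pair $(A,A')$ for each excess arc, and sum the resulting deficits to recover $\sum_k g_k \leq \sum_k b_k$. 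The main obstacle is the bookkeeping: ensuring that, as the middle-layer size $k$ and the witness pair vary, the forbidden small arcs produced by the injection remain genuinely distinct and cover the excess without double counting. A cleaner alternative, if it can be carried out, is a compression argument that pushes mass from middle layers into large layers using the shade operation $\partial^+$ studied in Proposition~\ref{prop:2.4}, while preserving $\nu \leq 2$; this would reduce the problem to the trivial large-layer bound, and the delicate point becomes showing that the shade step never introduces a third pairwise disjoint arc into $\mathcal G$.
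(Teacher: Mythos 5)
You correctly stratify by arc length and handle two of the three regimes as the paper does: the trivial bound $g_k \le n = b_k$ for $k \ge \lceil n/2\rceil$, and Theorem~\ref{th:9.3} giving $g_k \le 2k = b_k$ for $k \le n/3$. The difficulty is also correctly located in the middle range $n/3 < k < n/2$, where $g_k$ can be as large as $n$ while $b_k = 2k$.

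Both mechanisms you propose for the middle range have genuine gaps. For the injection: the small-layer target is $b_j = 2j$, not $n$, so knowing that certain small $j$-arcs are \emph{forbidden} from $\mathcal G$ does not by itself reduce $g_j$ below $2j$. At level $j \le n/3$ there are already $n - 2j \ge j$ arcs absent when $g_j = 2j$, and your forbidden arcs might simply be among those; nothing in Lemma~\ref{lem:9.4} (the source of the $2j$ bound) links the identity of the surviving arcs to your witness pairs, so the deficit you need is not produced. For the compression alternative: the shade does preserve $\nu \le 2$ (shrink each shaded set back to a member of $\mathcal G$ and you recover a matching), but it does \emph{not} preserve $|\mathcal G|$, because $\partial^+(\mathcal G\cap\mathcal A(n,k))$ can overlap the existing layer $\mathcal G\cap\mathcal A(n,k+1)$. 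Sperner's pushing-up operation avoids this collision only for antichains, and $\mathcal G$ is not one (e.g.\ for $n=7$, $\mathcal G_3 = \mathcal A(7,3)$ and $\mathcal G_4 \ne \emptyset$ makes compression strictly lose elements).

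The paper instead proves and uses Lemma~\ref{lem:9.6}: for any composition $n = k_1 + \dots + k_p$ with $p > r$, a rotated arc-partition of $[n]$ into pieces of these lengths contains at most $r$ members of $\mathcal G$, so summing over all $n$ rotations gives $g_{k_1} + \dots + g_{k_p} \le rn$. With $r=2$ this reads $g_{k_1}+\dots+g_{k_p} \le 2(k_1+\dots+k_p)$, which is exactly a coupled version of the desired layer bounds. One then chooses a collection of disjoint triples $(a,b,c)$ with $a+b+c=n$, each containing two middle-range entries and one small entry, whose union covers the open interval $(n/3,n/2)$; each triple's inequality trades a middle overshoot against a small-layer undershoot \emph{within the same inequality}, so there is no double-counting to control. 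The remaining work is only the arithmetic of constructing these triples, handled in three parity cases (with one odd case using a weighted partition $n=\frac{n-1}{2}+\frac{n-1}{2}+1$ to produce \eqref{eq:9.10}). This is the piece your plan is missing.
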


The proof of \eqref{eq:9.4} is based on a lemma that we state for the more general case $\nu(\mathcal G) \leq r$.

Let us set $g_k = |\mathcal G \cap \mathcal A(n, k)|$.

\setcounter{lemma}{5}
\begin{lemma}
\label{lem:9.6}
Let $p > r \geq 1$ be integers.
Suppose that $n = k_1 + k_2 + \ldots + k_p$.
Let $\mathcal G \subset \mathcal A(n)$ and suppose that $\nu(\mathcal G) \leq r$.
Then
\beq
\label{eq:9.5}
g_{k_1} + \ldots + g_{k_p} \leq rn.
\eeq
\end{lemma}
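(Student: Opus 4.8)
The plan is to recycle the partition–counting idea already used for Lemma~\ref{lem:9.4} and in the first half of the proof of Proposition~\ref{prop:4.4}. Fix the cyclic permutation $\pi=(1,2,\ldots,n)$ and put $s_0=0$, $s_i=k_1+\cdots+k_i$ for $1\le i\le p$, so that $s_p=n$. For each offset $p_0\in[n]$ form the $p$-tuple of arcs
$$
D_i^{(p_0)}=A_{k_i}(p_0+s_{i-1}),\qquad 1\le i\le p,
$$
with all indices read modulo $n$. Since $k_1+\cdots+k_p=n$, the arcs $D_1^{(p_0)},\ldots,D_p^{(p_0)}$ are consecutive and together sweep out exactly the positions $p_0,p_0+1,\ldots,p_0+n-1$; hence they form a partition of $[n]$ into nonempty arcs, in particular a family of $p$ pairwise disjoint, pairwise distinct members of $\mathcal A(n)$. (Here one uses $p\ge 2$ and $k_j\ge 1$, which forces $1\le k_i<n$, so each $D_i^{(p_0)}$ is a genuine arc and $\emptyset,[n]$ never occur as blocks.)

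The key observation is the matching bound: because $\nu(\mathcal G)\le r$, no partition can have $r+1$ of its blocks inside $\mathcal G$, so
$$
\bigl|\{\,i:\ D_i^{(p_0)}\in\mathcal G\,\}\bigr|\le r\qquad\text{for every }p_0\in[n].
$$
Summing this over the $n$ choices of $p_0$ gives
$$
\sum_{p_0=1}^{n}\bigl|\{\,i:\ D_i^{(p_0)}\in\mathcal G\,\}\bigr|\le rn .
$$

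It remains to recognise the left-hand side as $g_{k_1}+\cdots+g_{k_p}$, and for this I would exchange the order of summation. Fix an index $i$; as $p_0$ ranges over $[n]$ the head $p_0+s_{i-1}$ ranges over $[n]$ bijectively, hence $D_i^{(p_0)}=A_{k_i}(p_0+s_{i-1})$ ranges over all $n$ members of $\mathcal A(n,k_i)$, each exactly once. Therefore $\bigl|\{p_0:\ D_i^{(p_0)}\in\mathcal G\}\bigr|=|\mathcal G\cap\mathcal A(n,k_i)|=g_{k_i}$, and summing over $i$ gives
$$
\sum_{p_0=1}^{n}\bigl|\{\,i:\ D_i^{(p_0)}\in\mathcal G\,\}\bigr|=\sum_{i=1}^{p}g_{k_i},
$$
which together with the previous display yields \eqref{eq:9.5}. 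The argument is insensitive to repetitions among the $k_i$: if $k_i=k_j$ then $g_{k_i}=g_{k_j}$ and the common arc is simply tallied once through coordinate $i$ and once through coordinate $j$, exactly as the double count prescribes.

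There is no serious obstacle here — this is a clean double count exploiting that the $k_i$ partition $n$ — so the only point demanding care is the bookkeeping in the last step: one must verify that for each fixed $i$ the map $p_0\mapsto D_i^{(p_0)}$ is a bijection from $[n]$ onto $\mathcal A(n,k_i)$, so that equal-length blocks arising from different coordinates are not accidentally conflated or omitted, and one should note at the outset the elementary range restriction $1\le k_i<n$ that makes the $D_i^{(p_0)}$ legitimate arcs.
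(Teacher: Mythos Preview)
Your proof is correct and follows exactly the same strategy as the paper: fix a partition of the circle into consecutive arcs of lengths $k_1,\ldots,k_p$, use $\nu(\mathcal G)\le r$ to bound the number of blocks in $\mathcal G$ by $r$, then rotate the partition through all $n$ offsets and sum. The paper states this in two lines (``rotating this partition around the circle and summing up the inequalities''), while you have spelled out the double count and the bijection $p_0\mapsto D_i^{(p_0)}$ explicitly.
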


\begin{proof}[Proof of \eqref{eq:9.5}]
Fix the $p$ arcs $A_1,\ldots, A_p \subset \mathcal A(n)$ so that $|A_j| = k_j$ for $1 \leq j \leq p$ and these arcs form a partition of $[n] = \{x_1, \ldots, x_n\}$.
Then $\nu(\mathcal G) \leq r$ implies
\beq
\label{eq:9.6}
\bigl|\mathcal G \cap \{A_1, \ldots, A_p\}\bigr| \leq r.
\eeq
Rotating this partition around the circle and summing up the inequalities corresponding to \eqref{eq:9.6} yields \eqref{eq:9.5}.

One can get the same conclusion by choosing $A_1, \ldots, A_p$ uniformly at random.
\end{proof}

\begin{proof}[Proof of \eqref{eq:9.4}]
Let us partition $\mathcal G$ into $\mathcal G_b \cup \mathcal G_m \cup \mathcal G_s$ according the size of $|G|$.
Here $b, m$ and $s$ stand for big, medium and small, respectively.
We set 

\smallskip
$\mathcal G_b\, = \left\{G \in \mathcal G : |G| > \dfrac{n}{2}\right\}$,

\smallskip
$\mathcal G_m \!= \left\{G \in \mathcal G : \dfrac{n}{3} < |G| \leq \frac{n}{2}\right\}$,

\smallskip
$\mathcal G_s\, = \left\{G \in \mathcal G : |G| \leq \dfrac{n}{3}\right\}$.

\smallskip

For $\mathcal G_b$, that is, for $n > k > \frac{n}{2}$ we only use the trivial bound
$$
g_k \leq n.
$$
Since we are comparing $|\mathcal G|$ to $|\mathcal B(T_2)|$ and $b_k = n$ fur such $k$, we do not lose anything.
As $b_k = 2k$ for $1 \leq k \leq \frac{n}{2}$, what we need to prove is
\beq
\label{eq:9.7}
\sum_{1 \leq k \leq \frac{n}{2}} g_k \leq \sum_{1 \leq k \leq \frac{n}{2}} 2k.
\eeq

Our plan is to find a collection $\mathcal P = \{P_1, \ldots, P_w\}$ of pairwise disjoint sets, each of them a subset of $\left[1, \left\lfloor\frac{n}{2}\right\rfloor\right]$ with the following two properties.

\hspace*{3.5pt}(i) \quad $\displaystyle\sum_{x \in P_i} x = n$, \qquad $1 \leq i \leq w$;

(ii) \quad $P_1 \cup \ldots \cup P_w$ \ contains the open interval \ $\left(\frac{n}{3}, \frac{n}{2}\right)$.

Let us first show how this implies \eqref{eq:9.4}.
Using \eqref{eq:9.5} for $P_j$ gives
\beq
\label{eq:9.8}
\sum_{k \in P_j} g_k \leq \sum_{k \in P_j} 2k.
\eeq
Set $P = P_1 \cup \ldots \cup P_w$.
Summing the above inequalities yields
\beq
\label{eq:9.9}
\sum_{k \in P} g_k \leq \sum_{k \in P} 2k.
\eeq

The point is that for $1 \leq k \leq \frac{n}{2}$ and $k \notin P$ either $k \leq \frac{n}{3}$ and $g_k \leq 2k$ holds by \eqref{eq:9.2} or $k = \frac{n}{2}$ and $g_k \leq 2k = n$ again.

Thus adding to \eqref{eq:9.9} individually all these inequalities $g_k \leq 2k$ for $1 \leq k \leq \frac{n}{2}$, $k \notin P$ we obtain \eqref{eq:9.7}.

To construct $\mathcal P$ let us distinguish three cases.

\begin{itemize}
\itemsep=0pt
\leftskip=-10pt
\item[(a)] The length of the interval $\left[\left\lceil\frac{n}{3}\right\rceil, \left\lfloor \frac{n}{2}\right\rfloor\right]$, $2w$ is even.
Let $\mathcal P$ consist of the following $w$ triples:
$$
P_i = \left(\left\lfloor \frac{n}{2}\right\rfloor - 2i, \left\lfloor \frac{n}{2}\right\rfloor - 2i - 1, 4i + 1\right)\, : \, 0 \leq i < w.
$$

By the choice of $w$, $P_{w - 1} = \left(\left\lceil \frac{n}{3}\right\rceil + 1, \left\lceil\frac{n}{3}\right\rceil, 4w - 3\right)$ with
$4w - 3 = n - \left\lceil \frac{n}{3} \right\rceil - \left\lceil\frac{n}{3}\right\rceil - 1 < \frac{n}{3}$.
Thus each $P_i$ consists of three distinct integers and (i), (ii) are fulfilled.

\item[(b)] $n = 2q$ is even and the length of the interval $\left[\left\lceil \frac{n}{3}\right\rceil, \frac{n}{2}\right]$, $2w + 1$ is odd.
    In this case we renounce at $\frac{n}{2}$ and cover the interval $\left[\left\lceil \frac{n}3\right\rceil, \frac{n}{2} - 1\right]$ with the $w$ triples
    $$
    P_i = \left(\frac{n}2 - 2i + 1, \frac{n}{2} - 2i, 4i - 1\right), \ \ \ \ 1 \leq i \leq w.
    $$
    These verify (i) and (ii).
\item[(c)] $n = 2q + 1$ is odd and the length of the interval $\left[\left\lceil \frac{n}3\right\rceil, \left\lfloor \frac{n}{2}\right\rfloor\right]$, $2w - 1$ is odd.
In this case we renounce at $q = \frac{n - 1}{2}$ and cover the interval $\left[\left\lceil\frac{n}{3}\right\rceil, \left\lfloor \frac{n}{2}\right\rfloor - 1\right]$ with the triples
    $$
    P_i = \left(\frac{n - 1}{2} - 2i + 1, \frac{n - 1}{2} - 2i, 4i\right) \ \ \ \ i = 1,\ldots, w - 1.
    $$
Instead of constructing $P_w$ let us use \eqref{eq:9.5} for the choice $n = \frac{n - 1}{2} + \frac{n - 1}{2} + 1$ to deduce
\beq
\label{eq:9.10}
g_{\frac{n - 1}{2}} + \frac12 g_1 \leq 2\frac{n - 1}{2} + 1 = n.
\eeq
\end{itemize}

Now adding the inequalities \eqref{eq:9.8} for $1 \leq j < w$ along with \eqref{eq:9.10} will have each $g_k$ occur with coefficient~$1$
for the range of $\mathcal G_m$, that is, for $\frac{n}{3} < k < \frac{n}{2}$.
To conclude the proof of \eqref{eq:9.7} we add $g_k \leq 2k$ for the missing $k$ with $1 < k < \frac{n}{3}$ along with $\frac12 g_1 \leq \frac12 2 = 1$.
This concludes the proof.
\end{proof}

Let us mention that with a slight effort one can prove uniqueness as well.
E.g., in case (c), to have equality in \eqref{eq:9.7} we must have equality in $\frac12 g_1 = 1$.
That is, $g_1 = 2$.
Letting $\{x_i\}, \{x_{i'}\}$, $1 \leq i < i' \leq n$ be the $1$-element sets in $\mathcal G$,
$\mathcal G \subset\bigl\{G \in \mathcal A(n) : G \cap \{x_i, x_{i'}\} \neq \emptyset\bigr\}$ follows from $\nu(\mathcal G) \leq 2$.
To maximize $|\mathcal G|$ we need to choose $i' - i = \left\lfloor \frac{n}2\right\rfloor$ or $\left\lceil \frac{n}2\right\rceil$.
The proof for the cases $r > 2$ is relying much more on inequalities with fractional coefficients for $g_k$ with $k < n/r$.
It is too messy to include here.

\section{Improved bounds for cross-union families}
\label{sec:10}

Let us recall a result from \cite{FT}.

\begin{theorem}
\label{th:10.1}
Suppose that $\mathcal F_1, \mathcal F_2, \ldots, \mathcal F_r \subset {[n]\choose k}$, $kr \geq n$ and $\mathcal F_1, \ldots, \mathcal F_r$ are cross-union.
Then
\beq
\label{eq:10.1}
\prod_{1 \leq i \leq r} |\mathcal F_i| \leq {n - 1\choose k}^r.
\eeq
\end{theorem}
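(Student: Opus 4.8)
The plan is to establish the circular analogue of \eqref{eq:10.1} and then transfer it to $\binom{[n]}{k}$, though the transfer is subtler than the averaging by which \eqref{eq:3.1} was deduced from \eqref{eq:3.2}. First, two harmless reductions: if some $\mathcal F_i$ is empty then the left side of \eqref{eq:10.1} is $0$ and there is nothing to prove, so we may assume all $\mathcal F_i\neq\emptyset$; and, writing $p_i=|\mathcal F_i|/\binom nk$ and $\rho=(n-k)/n$, the identity $\binom{n-1}{k}=\rho\binom nk$ shows \eqref{eq:10.1} is equivalent to $\prod_{1\le i\le r}p_i\le\rho^{\,r}$.

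Next, the circular bound: if $\mathcal B_1,\dots,\mathcal B_r\subset\mathcal A(n,k)$ are cross-union (with $kr\ge n$) then $\prod_i|\mathcal B_i|\le(n-k)^r$. This is immediate: if some $\mathcal B_i=\emptyset$ it is trivial, and otherwise Proposition~\ref{prop:4.4}, applied with $\ell_1=\dots=\ell_r=k$ (legitimate because $kr\ge n$), gives $|\mathcal B_1|+\dots+|\mathcal B_r|\le r(n-k)$, whence AM--GM finishes. Note this bound is genuinely multiplicative: the corresponding sum estimate $\sum_i|\mathcal F_i|\le r\binom{n-1}{k}$ already fails in $\binom{[n]}{k}$, so one cannot pass to a sum.

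The transfer to $\binom{[n]}{k}$ is the crux. For a uniformly random cyclic order $\pi$ put $X_i=|\mathcal F_i\cap\pi(\mathcal A(n,k))|$; since the families $\mathcal F_i\cap\pi(\mathcal A(n,k))$ are again cross-union, the circular bound gives $\prod_iX_i\le(n-k)^r$ for every $\pi$, while $\mathbb E[X_i]=\tfrac n{\binom nk}|\mathcal F_i|=np_i$. A straight average would work if $\mathbb E[\prod_iX_i]\ge\prod_i\mathbb E[X_i]$ held, for then $n^r\prod_ip_i\le(n-k)^r$, i.e.\ $\prod_ip_i\le\rho^r$. Unfortunately this positive-correlation inequality is false for general cross-union families: already for $r=2$, $n=4$, $k=2$ the cross-union pair $\mathcal F_1=\{\{1,2\},\{2,3\}\}$, $\mathcal F_2=\{\{2,3\},\{2,4\}\}$ has $\mathbb E[X_1X_2]=\tfrac53<\tfrac{16}9=\mathbb E[X_1]\mathbb E[X_2]$. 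So the plain averaging argument does not apply, and repairing it is the main obstacle.

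The repair I would attempt is to compress first: a simultaneous shift $S_{uv}$ ($u<v$) applied to all of $\mathcal F_1,\dots,\mathcal F_r$ preserves the cross-union property and every cardinality $|\mathcal F_i|$ (equivalently, a simultaneous up-shift of the complements), so we may assume all $\mathcal F_i$ are shifted; for shifted families one expects the correlation inequality to hold — in the rigid configurations that shiftedness and cross-union force, the $X_i$ become essentially identical, for which it is automatic — and proving $\mathbb E[\prod X_i]\ge\prod\mathbb E[X_i]$ in this reduced setting, or else reading the bound off directly from the structure of shifted cross-union families, is where the work lies. A route that avoids averaging altogether is Pyber's Kruskal--Katona method: pass to the complements $\mathcal F_i^c\subset\binom{[n]}{n-k}$, which satisfy $G_1\cap\dots\cap G_r\neq\emptyset$ for all $G_i\in\mathcal F_i^c$; for each $j$ bound $|\mathcal F_j^c|$ by $\binom nk$ minus the $(n-k)$-shadow of $\{[n]\setminus G:G\in\mathcal F_i^c\}$ via Theorem~\ref{th:2.2} (or its circular form, Theorem~\ref{th:2.3}), and optimise the resulting product over the Kruskal--Katona parameter, which is extremal exactly when each factor equals $\binom{n-1}{k}$; the genuine cross-union condition, not merely its pairwise consequence, must be used when $r\ge3$. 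Either way the circular inequality is free, and all the difficulty is in the passage back to $\binom{[n]}{k}$.
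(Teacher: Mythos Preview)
The paper does not prove Theorem~\ref{th:10.1}: it is quoted from \cite{FT}, with the case $r=2$ credited to Pyber. The only argument the paper offers is the product trick reducing $(n,k,r)$ to $(n,k,r-1)$ when $k(r-1)\ge n$, and it explicitly notes that this leaves the critical range $n/r\le k<n/(r-1)$ untouched. So there is no full ``paper proof'' to compare against; the paper treats \eqref{eq:10.1} as background and instead uses the Circle to prove the \emph{sum} bound Theorem~\ref{th:10.2}.

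Your proposal is likewise not a proof, and you say so. The circular step (Proposition~\ref{prop:4.4} plus AM--GM giving $\prod_i|\mathcal B_i|\le(n-k)^r$) is correct, and your $n=4$, $k=2$ counterexample to $\mathbb E[\prod X_i]\ge\prod\mathbb E[X_i]$ is valid and well chosen: it explains precisely why Katona's averaging, which suffices for sums, cannot directly deliver the product bound. This is a genuine and useful diagnosis, and it is consistent with the paper's decision not to attempt \eqref{eq:10.1} via the Circle at all.

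Where your write-up falls short is the repair. The shifting-plus-correlation route is only a hope: you have not shown that shifted cross-union families satisfy the needed positive correlation on random cyclic orders, and there is no obvious reason they should. Your second route --- pass to complements and run a Kruskal--Katona argument in Pyber's style --- is the right one and is essentially what \cite{FT} does, but you have only named it, not executed it. So the gap is real: the passage from the Circle back to $\binom{[n]}{k}$, which you correctly flag as ``the crux'', remains unbridged in your text. The paper's partial inductive argument is orthogonal to yours and also incomplete; neither substitutes for the actual \cite{FT} proof.
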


We should mention that the case $r = 2$ is due to Pyber \cite{P}.
Suppose for a moment that $r \geq 3$, $k(r - 1) \geq n$.
Then one can prove \eqref{eq:10.1} for the triple $(n, k, r)$ using \eqref{eq:10.1} for $(n, k, r - 1)$.
Namely, if $|\mathcal F_1| \times \ldots \times |\mathcal F_r| = 0$ then \eqref{eq:10.1} is evident.
Otherwise for any $j$, $1 \leq j \leq r$, the $r - 1$ families $\mathcal F_i$, $1 \leq i \leq r$, $i\neq j$, must be cross-union.
Therefore
\beq
\label{eq:10.2}
\prod_{1 \leq i \leq r, \,i\neq j} |\mathcal F_i| \leq {n - 1\choose k}^{r - 1}.
\eeq
Taking the product of \eqref{eq:10.2} for all $1 \leq j \leq r$ yields \eqref{eq:10.1}.
However, in the range $\frac{n}{r} \leq k < \frac{n}{r - 1}$ this approach does not work.

The reader must have noticed that the RHS of \eqref{eq:10.1} corresponds to the choice $\mathcal F_1 = \ldots = \mathcal F_r = {[n - 1]\choose k}$. \eqref{eq:10.1} maybe stated as ``the geometric mean of the $|\mathcal F_i|$ is at most ${n - 1\choose k}$''.
For most applications it would be much handier to have \eqref{eq:10.1} for the arithmetic mean.
The problem is that except for the case $n = kr$ it is \emph{not} true.
The standard counterexample is $\mathcal F_1 = \ldots = \mathcal F_{r - 1} = {[n]\choose k}$, $\mathcal F_r = \emptyset$.

The aim of the present section is to use the Katona Circle and prove
\beq
\label{eq:10.3}
\sum_{1 \leq i \leq r} |\mathcal F_i| \leq r {n - 1\choose k}
\eeq
in a wide range of cases.

\begin{theorem}
\label{th:10.2}
Let $n, k, r$ be positive integers, $r \geq 3$.
Suppose that $\mathcal F_1, \ldots, \mathcal F_r \subset{[n]\choose k}$ are \emph{non-empty} and cross-union.
Then \eqref{eq:10.3} holds if
$$
\frac{n}{r - 1} \leq k \leq \frac{r - 1}{r} n.
$$
\end{theorem}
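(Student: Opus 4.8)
The plan is to run the Katona Circle once more, with Proposition \ref{prop:4.4} as the circular input. First I would take a uniformly random cyclic permutation $\pi$ and set $\mathcal B_i=\mathcal F_i\cap\mathcal A(n,k)$. Since a fixed $k$-set is an arc with probability $n\bigm/{n\choose k}$, the expectation of $\sum_i|\mathcal B_i|$ is $\frac{n}{{n\choose k}}\sum_i|\mathcal F_i|$; because $\frac{n-k}{n}{n\choose k}={n-1\choose k}$, it suffices to prove the circular bound
$$
\sum_{1\le i\le r}|\mathcal B_i|\le r(n-k)\qquad\text{for every cyclic permutation.}
$$
A point worth isolating at the outset: since every $\mathcal F_i$ is \emph{non-empty}, for any $J\subseteq[r]$ the families $\{\mathcal B_i:i\in J\}$ are still cross-union — given $B_i\in\mathcal B_i$ for $i\in J$, complete to a full system with arbitrary $F_j\in\mathcal F_j$ for $j\notin J$ and use that the larger union is still $\neq[n]$.

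Fixing the permutation, I would let $I=\{i:\mathcal B_i\neq\emptyset\}$, $t=|I|$, and introduce $q=\lceil n/k\rceil$. From $k<n$ and $k\ge n/(r-1)$ one gets $2\le q\le\min\{n,r-1\}$, and $k\le\frac{r-1}{r}n$ is exactly $n\le r(n-k)$. If $t\ge q$, order $I$ so that $|\mathcal B_{i_1}|\ge\cdots\ge|\mathcal B_{i_t}|$ and apply Proposition \ref{prop:4.4} to the $q$ largest (legitimate: they are non-empty, cross-union, contained in $\mathcal A(n,k)$, and $qk\ge n$, $q\le n$, $k<n$): this gives $|\mathcal B_{i_1}|+\cdots+|\mathcal B_{i_q}|\le q(n-k)$, hence $|\mathcal B_{i_q}|\le n-k$, hence $|\mathcal B_{i_j}|\le n-k$ for every $j\ge q$, and summing, $\sum_{i\in I}|\mathcal B_i|\le q(n-k)+(t-q)(n-k)=t(n-k)\le r(n-k)$.

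The remaining case is $t<q$, i.e.\ $t\le q-1\le r-2$, where the definition of $q$ forces $tk\le(q-1)k<n$; here cross-unionness is vacuous and Proposition \ref{prop:4.4} is useless, so I would fall back on the trivial bound $|\mathcal B_i|\le|\mathcal A(n,k)|=n$. For $t=0$ the sum is $0$; for $t=1$ the needed inequality $n\le r(n-k)$ is precisely $k\le\frac{r-1}{r}n$; for $2\le t\le r-2$ (possible only when $r\ge4$), $tk<n$ gives $n-k>\frac{t-1}{t}\,n$, and the elementary fact that $t^2-rt+r\le0$ throughout $2\le t\le r-2$ (it is convex in $t$ and $\le0$ at both endpoints when $r\ge4$) yields $tn\le\frac{r(t-1)}{t}n<r(n-k)$. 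Averaging then gives \eqref{eq:10.3}.

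I expect the genuine difficulty to be exactly this last regime — the circles on which many of the $\mathcal F_i$ restrict to the empty arc family. On such a circle Proposition \ref{prop:4.4} says nothing, so one cannot hope for more than $|\mathcal B_i|\le n$, and the bound $\sum|\mathcal B_i|\le r(n-k)$ must be squeezed out arithmetically; this is where both halves of the hypothesis on $k$ are really used (the upper bound for $t=1$, and $n/(r-1)\le k$, which forces $q\le r-1$ and so bounds $t$, for the middle range), and where one sees why $r\ge3$ — effectively $r\ge4$ for intermediate $t$ — is needed. One should also double-check that keeping the cross-union property for the surviving families genuinely needs the non-emptiness of \emph{all} the $\mathcal F_i$, not just those in $I$, as in the first paragraph.
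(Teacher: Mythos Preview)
Your proof is correct and follows the same Katona Circle strategy as the paper, reducing to Proposition~\ref{prop:4.4} on each cycle and splitting into cases according to how many of the $\mathcal B_i$ are non-empty. The only cosmetic differences are that the paper's $t,q$ are your $q,t$, and in your case $t\ge q$ the paper more simply applies Proposition~\ref{prop:4.4} directly to \emph{all} $t$ non-empty $\mathcal B_i$ at once (legitimate since $tk\ge qk\ge n$), obtaining $\sum_i|\mathcal B_i|\le t(n-k)$ in one stroke rather than bootstrapping from the $q$ largest.
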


\begin{proof}
We are going to use Katona's Circle.
Let $\mathcal A(n, k)$ be the family of $k$-arcs for a fixed cyclic permutation~$\pi$.
Define $\mathcal G_i = \mathcal F_i \cap \mathcal A(n,k)$.

Since ${n - 1\choose k} = (n - k) {n\choose k} \bigm/n$, \eqref{eq:10.3} follows once we prove
\beq
\label{eq:10.4}
\sum_{1 \leq i \leq r} |\mathcal G_i| \leq r(n - k).
\eeq

Let us define $t$ as the minimal integer such that $tk \geq n$.
Note that $t \leq r - 1$ or equivalently $t + 1 \leq r$.

Let $q$ be the number of non-empty families among the $\mathcal G_i$.
By symmetry assume that $\mathcal G_i = \emptyset$ iff $q < i \leq r$.

If $q \geq t$ then we may apply Proposition \ref{prop:4.4} with $s = q$, $\ell_i = k$ for $1 \leq i \leq q$ and infer
$$
\sum_{1 \leq i \leq r} |\mathcal G_i| = \sum_{1 \leq i \leq q} |\mathcal G_i| \leq q(n - k) \leq r(n - k) \ \text{ proving \eqref{eq:10.4}}.
$$

Suppose next that $q \leq t - 1$.
Then we have the obvious bound
$$
\sum_{1 \leq i \leq r} |\mathcal G_i| \leq (t - 1)n.
$$
To conclude the proof we need to show that the RHS is not larger than $r(n - k)$.
In the case $t = 2$ \
$$
n \leq r(n - k) \ \text{ is equivalent to } \ k \leq (r - 1)n/r.
$$
If $t \geq 3$ then recall $r \geq t + 1$ and prove
$$
(t - 1)n \leq (t + 1) (n - k).
$$
Equivalently,
$$
k \leq \frac2{t + 1} n.
$$
Since for $t \geq 3$, $\frac1{t - 1} \leq \frac2{t + 1}$, the above inequality follows from $k < \frac{n}{t - 1}$, the definition of~$t$.
This concludes the proof.
\end{proof}

Let us note that the bound $k \leq \frac{r - 1}{r}n$ is asymptotically necessary.
Namely, if $\frac{r - 1}{r} < \alpha < 1$, $k = \lfloor \alpha n \rfloor$ one can define
$$
\mathcal H_1 = \left\{H \in {[n]\choose k} : [k + 1, n] \not\subset H\right\}, \ \ \ \mathcal H_2 = \ldots = \mathcal H_r = \{[k]\}.
$$
These $r$ families are cross-union and
$$
\lim_{n \to \infty} |\mathcal H_1| \Bigm/{n - 1\choose k} = \frac{n}{n - k} = \frac1{1 - \alpha} > r.
$$

\section{Intersecting-union families on the Katona Circle}
\label{sec:11}

A family $\mathcal F \subset 2^{[n]}$ is called \emph{intersecting-union} or for short an \emph{IU-family} if it is both intersecting and union.
That is for all $F, G \in \mathcal F$, neither $F \cap G = \emptyset$ nor $F \cup G = [n]$ holds.

\begin{theorem}
\label{th:11.1}
Suppose that $\mathcal F \subset 2^{[n]}$ is an IU-family.
Then
\beq
\label{eq:11.1}
|\mathcal F| \leq 2^{n - 2}.
\eeq
\end{theorem}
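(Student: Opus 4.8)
The plan is to trap $\mathcal F$ between an intersecting up-set and a ``union'' down-set and then apply Kleitman's correlation inequality. First I would isolate the structural content of the two hypotheses. Since $\mathcal F$ is intersecting it contains at most one set from each complementary pair $\{F,\,[n]\setminus F\}$, so $|\mathcal F|\le 2^{n-1}$ for free; moreover the family of complements $\mathcal F^{c}=\{[n]\setminus F:F\in\mathcal F\}$ is \emph{also} intersecting, which is exactly the ``union'' condition rewritten on complements. (Equivalently, no member of $\mathcal F$ is $\subseteq$- or $\supseteq$-comparable to a member of $\mathcal F^{c}$, so every maximal chain of $2^{[n]}$ misses $\mathcal F$ or misses $\mathcal F^{c}$; this is a flag reformulation in the spirit of the rest of the paper, but on its own it only recovers the bound $2^{n-1}$.)

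To gain the extra factor $2$ I would pass to closures. Let $\mathcal U$ be the up-set generated by $\mathcal F$ (all supersets of members of $\mathcal F$) and $\mathcal D$ the down-set generated by $\mathcal F$ (all subsets of members of $\mathcal F$). Enlarging sets keeps the family intersecting ($A\supseteq F$ and $A'\supseteq F'$ give $A\cap A'\supseteq F\cap F'\neq\emptyset$), so $\mathcal U$ is intersecting and hence $|\mathcal U|\le 2^{n-1}$; dually, shrinking sets keeps the property that no two members have union $[n]$, so $\mathcal D$ contains no complementary pair and $|\mathcal D|\le 2^{n-1}$. Trivially $\mathcal F\subseteq\mathcal U$ and $\mathcal F\subseteq\mathcal D$, so $\mathcal F\subseteq\mathcal U\cap\mathcal D$. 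Now $\mathcal U$ is an increasing family and $\mathcal D$ a decreasing family, so Kleitman's lemma (an increasing and a decreasing subfamily of $2^{[n]}$ are negatively correlated) yields $|\mathcal U\cap\mathcal D|\cdot 2^{n}\le|\mathcal U|\cdot|\mathcal D|$, and therefore
\[
|\mathcal F|\ \le\ |\mathcal U\cap\mathcal D|\ \le\ \frac{|\mathcal U|\,|\mathcal D|}{2^{n}}\ \le\ \frac{2^{n-1}\cdot 2^{n-1}}{2^{n}}\ =\ 2^{n-2}.
\]

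The one non-elementary ingredient is Kleitman's correlation inequality; everything else is the remark that ``intersecting'' and ``union'' are precisely the two conditions that force the two closures to avoid complementary pairs, together with the trivial $2^{n-1}$ bound applied --- and this is the point to be careful about --- to $\mathcal U$ and $\mathcal D$ rather than to $\mathcal F$ itself. I do not anticipate a real obstacle beyond recognising that Kleitman's inequality is the tool that fuses the two separate $2^{n-1}$ bounds into a single $2^{n-2}$; the only verifications needed are that the up-closure stays intersecting and the down-closure stays union-free, each a one-liner, and these are where the hypotheses are genuinely used. This also suggests the equality case, $\mathcal U=\{A:x\in A\}$ and $\mathcal D=\{B:y\notin B\}$, i.e.\ $\mathcal F\subseteq\{F:x\in F,\ y\notin F\}$.
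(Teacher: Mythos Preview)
Your argument is correct and is precisely the Harris--Kleitman route the paper alludes to: the paper does not spell out a proof of Theorem~\ref{th:11.1} but only cites \cite{DL}, \cite{Se} and remarks that ``one can easily deduce it using the Harris--Kleitman Correlation Lemma.'' Your proposal is exactly that deduction, carried out in full: pass to the up-closure (still intersecting, hence $\le 2^{n-1}$) and the down-closure (still union, hence $\le 2^{n-1}$), then apply negative correlation of an up-set and a down-set to get $|\mathcal F|\le |\mathcal U\cap\mathcal D|\le 2^{n-2}$.
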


This result was proved by several authors around 50 years ago (cf. \cite{DL}, \cite{Se}).
One can easily deduce it using the Harris--Kleitman Correlation Lemma (cf. \cite{Ha}, \cite{Kl1}) as well.

It does not appear to be a likely candidate for the Katona Circle Method because the many examples giving equality in \eqref{eq:11.1} are irregular with respect to the circle: $|\mathcal F \cap \mathcal A(n)|$ is strongly dependent on the particular (cyclic) permutation that we choose.

Nevertheless we can solve the corresponding problem for the Katona Circle.

\setcounter{example}{1}
\begin{example}
\label{ex:11.2}
Let $\pi = (x_1, \ldots, x_n)$ be a (cyclic) permutation and fix the integer $r$, $1 < r \leq n$.
Define $\mathcal D(1, r) = \{B \in \mathcal A(n) : 1 \in B, r \notin B\}$.
\end{example}

Obviously, $\mathcal D(1, r)$ is an IU-family.
Let us compute its size.
The arc with head $x_i$ and tail $x_j$ is in $\mathcal D(1,r)$ iff $x_i$ is in $\{x_{r + 1}, x_{r + 2}, \ldots, x_1\}$ and $x_j \in \{x_1, \ldots, x_{r - 1}\}$.
Thus
$$
|\mathcal D(1, r)| = (r - 1)(n - r + 1) \leq \left\lfloor \frac{n}{2}\right\rfloor \left\lceil \frac{n}{2}\right\rceil.
$$

\setcounter{theorem}{2}
\begin{theorem}
\label{th:11.3}
Suppose that $\mathcal D \subset \mathcal A(n)$ is an IU-family.
Then
\beq
\label{eq:11.2}
|\mathcal D| \leq \left\lfloor \frac{n}{2}\right\rfloor \times \left\lceil\frac{n}{2}\right\rceil.
\eeq
\end{theorem}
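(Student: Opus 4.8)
The plan is to stratify $\mathcal{D}$ by arc length and bound each layer separately, invoking the Circular Erd\H{o}s--Ko--Rado Theorem (Theorem \ref{th:3.2}) twice: once directly and once after passing to complements. Write $\mathcal{D}_k = \mathcal{D}\cap\mathcal{A}(n,k)$ and $d_k = |\mathcal{D}_k|$, so that $|\mathcal{D}| = \sum_{k=1}^{n-1} d_k$. The heart of the argument is the single estimate $d_k \le \min\{k,\, n-k\}$, valid for every $1 \le k \le n-1$.

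To obtain this estimate I would argue as follows. Since $\mathcal{D}$ is intersecting, so is each $\mathcal{D}_k$; hence if $2k \le n$, Theorem \ref{th:3.2} gives $d_k \le k$. For the complementary range I pass to the family of complements $\mathcal{D}_k^{\,c} = \{[n]\setminus D : D\in\mathcal{D}_k\}\subset\mathcal{A}(n,n-k)$, noting that the complement of an arc is again an arc. Here the \emph{union} hypothesis enters: for $D,D'\in\mathcal{D}_k$ one has $([n]\setminus D)\cap([n]\setminus D') = [n]\setminus(D\cup D') \ne \emptyset$, so $\mathcal{D}_k^{\,c}$ is intersecting, and if $2k \ge n$, i.e.\ $n \ge 2(n-k)$, Theorem \ref{th:3.2} applied to $\mathcal{D}_k^{\,c}$ yields $d_k = |\mathcal{D}_k^{\,c}| \le n-k$. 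The two conditions $2k \le n$ and $2k \ge n$ are exhaustive (and agree when $2k=n$), so $d_k \le \min\{k,\,n-k\}$ for every $k$.

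Summing over $k$ then finishes the proof: $|\mathcal{D}| \le \sum_{k=1}^{n-1}\min\{k,\,n-k\}$, and a short parity computation evaluates this sum. For $n = 2m$ it equals $2\bigl(1+2+\cdots+(m-1)\bigr)+m = m^2$, and for $n = 2m+1$ it equals $2\bigl(1+2+\cdots+m\bigr) = m(m+1)$; in both cases this is exactly $\lfloor n/2\rfloor\lceil n/2\rceil$, which is \eqref{eq:11.2}.

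I do not expect a genuine obstacle here; the one thing to be careful about is the bookkeeping that makes the two applications of Theorem \ref{th:3.2} jointly cover every length $k$ while each contributes precisely $\min\{k,\,n-k\}$. This is also why the bound is tight, being matched by the family $\mathcal{D}(1,r)$ of Example \ref{ex:11.2} with $r-1 = \lfloor n/2\rfloor$; and with a little more care (using the equality discussion around Theorem \ref{th:3.4} for the small layers, and its dual for the complemented layers) one could identify $\mathcal{D}(1,r)$ as essentially the only extremal configuration, though that refinement is not needed for \eqref{eq:11.2}.
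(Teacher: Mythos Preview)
Your proof is correct and is essentially identical to the paper's own argument: stratify $\mathcal{D}$ by arc length, bound $|\mathcal{D}_k|$ by $k$ via Theorem~\ref{th:3.2} when $k\le n/2$ and by $n-k$ via the same theorem applied to complements when $k\ge n/2$, then sum. The only cosmetic difference is that the paper writes the final sum as two triangular pieces evaluated via binomial coefficients rather than your parity split, but the content is the same.
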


\begin{proof}
Consider $\mathcal D_k = |\mathcal D \cap \mathcal A(n, k)|$.
We claim that
\begin{align}
\label{eq:11.3}
|\mathcal D_k| &\leq k \ \ \text{ for } \ \ k \leq \frac{n}{2} \ \ \text{ and}\\
\label{eq:11.4}
|\mathcal D_k| &\leq n - k \ \text{ for }\ \ k > \frac{n}{2}.
\end{align}
Indeed \eqref{eq:11.3} follows from the fact that $\mathcal D_k \subset \mathcal D$ is intersecting (cf. \eqref{eq:3.2}).
On the other hand if $k > \frac{n}{2}$ then $n - k < \frac{n}{2}$.
Since $\mathcal D$ has the union property, the family of complements $\mathcal D^c = \{[n] \setminus D : D \in \mathcal D\}$ is intersecting.
This implies \eqref{eq:11.4}.

To prove \eqref{eq:11.2} we simply sum \eqref{eq:11.3} for $1 \leq k \leq \frac{n}{2}$ and \eqref{eq:11.4} for $\frac{n}{2} < k < n$.
This yields
\beq
\label{eq:11.5}
|\mathcal D|\! \leq \! \left(1 + 2 +\! \ldots \! + \! \left\lfloor \frac{n}{2}\right\rfloor\right)\! +\! \left(\left\lfloor \frac{n - 1}{2}\right\rfloor\! +\! \ldots\! +\! 1\right) \! =\!
{\left\lfloor\frac{n}{2}\right\rfloor + 1\choose 2} + {\left\lfloor\frac{n + 1}{2}\right\rfloor\choose 2}.
\eeq
For $n = 2q + 1$ the RHS is $2{q + 1\choose 2} = (q + 1)q$ and for $n = 2q$ it is ${q + 1\choose 2} + {q\choose 2} = q^2$ proving \eqref{eq:11.2}.
\end{proof}

Unlike for IU-families in $2^{[n]}$ the optimal families are unique.
Namely, to have equality in \eqref{eq:11.5} one must have equality in \eqref{eq:11.3} and \eqref{eq:11.4} for \emph{all}~$k$.
In particular $|\mathcal D_1| = 1$, $|\mathcal D_{n - 1}| = 1$.
This implies that $\mathcal D = \mathcal D(i, j)$ for some appropriate $i \neq j$.
Now $|\mathcal D(i,j)| = \left\lfloor \frac{n}{2}\right\rfloor \times \left\lceil \frac{n}{2}\right\rceil$ implies $|i - j| = \left\lfloor \frac{n}{2}\right\rfloor$ or $\left\lceil \frac{n}{2}\right\rfloor$, i.e., $x_i$ and $x_j$ are antipodal for $n$ even and almost antipodal for $n$ odd.

Recalling that the expectation of $|\mathcal F^{(k)} \cap \mathcal A(n, k)|$ is $|\mathcal F^{(k)}| n \bigm/{n\choose k}$, \eqref{eq:11.2} implies
\beq
\label{eq:11.6}
\sum_{F \in \mathcal F} \frac1{{n\choose |F|}} \leq \frac{\left\lfloor\frac{n}{2}\right\rfloor \cdot \left\lceil\frac{n}{2}\right\rceil}{n} \leq \frac{n}{4}.
\eeq
However, this appears to be far from best possible.
We conjecture that $6$ is the correct denominator.

\setcounter{conjecture}{3}
\begin{conjecture}
\label{con:11.4}
Suppose that $\mathcal F \subset 2^{[n]}$ is an IU-family.
Then
\beq
\label{eq:11.7}
\sum_{F \in \mathcal F} 1\Bigm/{n\choose |F|} \leq \frac{n + 1}{6}.
\eeq
\end{conjecture}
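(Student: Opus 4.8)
The plan is to recast \eqref{eq:11.7} in terms of biased measures. For $0\le p\le 1$ write $\mu_p(\mathcal F)=\sum_{F\in\mathcal F}p^{|F|}(1-p)^{n-|F|}$. Since $\int_0^1 p^k(1-p)^{n-k}\,dp=\frac1{(n+1)\binom nk}$, one gets $\sum_{F\in\mathcal F}1/\binom n{|F|}=(n+1)\int_0^1\mu_p(\mathcal F)\,dp$, so \eqref{eq:11.7} is equivalent to $\int_0^1\mu_p(\mathcal F)\,dp\le\frac16$. As $\int_0^1 p(1-p)\,dp=\frac16$, and the IU-family $\{F\in 2^{[n]}:1\in F,\ 2\notin F\}$ (of size $2^{n-2}$) satisfies $\mu_p=p(1-p)$ identically, the cleanest target is the pointwise strengthening
\[
\mu_p(\mathcal F)\le p(1-p)\qquad\text{for every IU-family }\mathcal F\subset 2^{[n]}\text{ and all }p\in[0,1].
\]
At $p=\tfrac12$ this is exactly Theorem \ref{th:11.1}, $|\mathcal F|\le 2^{n-2}$, so what is wanted is a one-parameter refinement of the known bound, and \eqref{eq:11.7} would follow by integrating it in $p$.

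To attack the pointwise inequality I would pass to the up-closure $\mathcal F^{\uparrow}$ and the down-closure $\mathcal F^{\downarrow}$: the former is still intersecting, the latter still union-free, and $\mathcal F\subseteq\mathcal F^{\uparrow}\cap\mathcal F^{\downarrow}$. By the $p$-biased form of the Harris--Kleitman correlation lemma (one increasing, one decreasing family),
\[
\mu_p(\mathcal F)\le\mu_p\!\left(\mathcal F^{\uparrow}\cap\mathcal F^{\downarrow}\right)\le\mu_p(\mathcal F^{\uparrow})\,\mu_p(\mathcal F^{\downarrow}).
\]
Writing $\mathcal V$ for the intersecting up-set $(\mathcal F^{\downarrow})^{c}=\{[n]\setminus D:D\in\mathcal F^{\downarrow}\}$, so that $\mu_p(\mathcal F^{\downarrow})=\mu_{1-p}(\mathcal V)$, it would suffice to show that for intersecting up-sets $\mathcal U,\mathcal V\subset 2^{[n]}$ one has $\mu_p(\mathcal U)\mu_{1-p}(\mathcal V)\le p(1-p)$, or at least $\int_0^1\mu_p(\mathcal U)\mu_{1-p}(\mathcal V)\,dp\le\frac16$. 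The $p$-biased Erd\H{o}s--Ko--Rado inequality ($\mu_p$ of an intersecting family is $\le p$ for $p\le\tfrac12$), applied to $\mathcal U$ on $[0,\tfrac12]$ and to $\mathcal V$ on $[\tfrac12,1]$, together with monotonicity of $\mu_p$, already gives $\mu_p(\mathcal U)\mu_{1-p}(\mathcal V)\le\min(p,1-p)$, hence only $\int\le\frac14$; the missing factor $\tfrac23$ must come from the fact that $\mathcal U$ and $\mathcal V$ are not unrelated, since $\mathcal F^{\uparrow}\cap\mathcal F^{\downarrow}\neq\emptyset$ yields a complementary pair $S\in\mathcal U$, $[n]\setminus S\in\mathcal V$, so the two measures cannot both sit at their separate extremes.

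There is a self-similarity that both guides the guess and pins down the extremal shape: given intersecting up-sets $\mathcal U,\mathcal V$ on $[n]$, the family $\mathcal W=\{S\sqcup T:S\in\mathcal U,\ T\in\mathcal V^{c}\}$ on the disjoint union $[n]\sqcup[n]$ is itself both intersecting and union-free (the intersection is forced in the first copy, the non-covering in the second), and the same Beta-integral computation yields $\int_0^1\mu_p(\mathcal U)\mu_{1-p}(\mathcal V)\,dp=\frac1{2n+1}\sum_{W\in\mathcal W}1/\binom{2n}{|W|}$. Thus Conjecture \ref{con:11.4} for ground set $[2n]$ implies the product inequality above for $[n]$, while the product inequality for $[n]$ implies the conjecture for $[n]$ through the Harris--Kleitman step; so the conjecture is equivalent to its restriction to these ``product'' IU-families, whose extremal member is exactly $\{1\in\cdot,\ 2\notin\cdot\}$. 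I would therefore attack the product case directly — either spectrally on the biased cube, treating ``intersecting'' and ``union-free'' as two rank-one constraints that ought to control the highest Fourier weights, or by hunting for a compression that, unlike the one-sided shifts, stays inside $\mathcal F^{\uparrow}\cap\mathcal F^{\downarrow}$ and drives it to the canonical family.

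The hard part is precisely this last step. The intersecting and union-free conditions obstruct one another: the down-compression that would shrink a family typically destroys the intersecting property, the up-compression destroys union-freeness, so there is no evident monotonisation; and, as the $\frac14$ estimate shows, multiplying the two sharpest one-sided bounds is lossy by exactly the decisive factor. Extracting the genuine gain from the \emph{simultaneous} validity of both conditions — equivalently, quantifying the anti-correlation between $\mathcal F^{\uparrow}$ and $\mathcal F^{\downarrow}$ — is where a new idea seems to be needed.
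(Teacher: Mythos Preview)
This statement is a \emph{conjecture} in the paper, not a theorem; the paper does not prove it. The only bound the paper establishes in this direction is the weaker $\sum_{F\in\mathcal F} 1/\binom{n}{|F|} \le \lfloor n/2\rfloor \lceil n/2\rceil /n \le n/4$, obtained from the circle bound \eqref{eq:11.2} (see \eqref{eq:11.6}), after which $(n+1)/6$ is posed as the conjectured sharp value.

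Your proposal is likewise not a proof, as you yourself concede in the final paragraph. The Beta-integral reformulation is correct and attractive: the identity $\sum_{F}1/\binom{n}{|F|}=(n+1)\int_0^1\mu_p(\mathcal F)\,dp$ does recast \eqref{eq:11.7} as $\int_0^1\mu_p(\mathcal F)\,dp\le 1/6$, and the Harris--Kleitman step $\mu_p(\mathcal F)\le\mu_p(\mathcal F^{\uparrow})\mu_p(\mathcal F^{\downarrow})$ is valid. But from that point you only extract $\int\le 1/4$, i.e.\ $\sum\le (n+1)/4$, which is essentially the same barrier the paper reaches with the circle method. Your ``self-similarity'' observation reduces the general conjecture on $[n]$ to its product-family instance on $[2n]$; this runs in the wrong direction for any inductive scheme, so while it pins down the extremal shape it does not close the gap between $1/4$ and $1/6$.

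In short: there is no proof in the paper to compare against, and your write-up --- though it sets up a clean analytic framework and correctly isolates where the difficulty lies --- leaves the decisive step open, exactly as the paper does.
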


Let us mention in connection an old unsolved problem related to IU-families.

\begin{conjecture}[Gronau \cite{Gr}]
\label{con:11.5}
Suppose that $\mathcal F \subset 2^{[n]}$ is a family with the property that for every $F, G \in \mathcal F$ either $F = [n]\setminus G$ or both $F \cap G \neq \emptyset$ and $F \cup G \neq [n]$ are satisfied.
Then
\beq
\label{eq:11.8}
|\mathcal F| \leq 2^{n - 2} \ \ \text{ or } \ n \ \text{ is even and } \ |\mathcal F| \leq {n\choose n/2}.
\eeq
\end{conjecture}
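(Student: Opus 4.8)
The plan is to peel off the trivial cases, decompose $\mathcal F$ according to complementary pairs, and thereby reduce Conjecture~\ref{con:11.5} to a problem about IU-families and about a new kind of self-complementary antichain. First, if $\emptyset\in\mathcal F$ then every $G\in\mathcal F$ forces $G=[n]$, so $\mathcal F\subset\{\emptyset,[n]\}$ and $|\mathcal F|\le 2\le 2^{n-2}$ for $n\ge3$; likewise if $[n]\in\mathcal F$. So assume $\emptyset,[n]\notin\mathcal F$ and write $\mathcal F=\mathcal B\sqcup\mathcal H$, where $\mathcal B=\{F\in\mathcal F:[n]\setminus F\in\mathcal F\}$ is the self-complementary part and $\mathcal H=\mathcal F\setminus\mathcal B$. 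Checking Gronau's condition on the various kinds of pairs yields: (i) $\mathcal H$ is an IU-family; (ii) $\mathcal B$ is self-complementary and any two \emph{non}-complementary members of $\mathcal B$ form an IU-pair, hence $\mathcal B$ is an antichain; (iii) each $B\in\mathcal B$ and $H\in\mathcal H$ form an IU-pair and are moreover incomparable (apply the condition to $H$ and $[n]\setminus B\in\mathcal F$).

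By Theorem~\ref{th:11.1} we get $|\mathcal H|\le 2^{n-2}$, and by Sperner's Theorem (Theorem~\ref{th:1.6}), $|\mathcal B|\le{n\choose\lfloor n/2\rfloor}$; if $\mathcal B=\emptyset$ we are done, so assume $\mathcal B\ne\emptyset$. The first quantitative step I would take is a correlation estimate. Let $\mathcal F^\uparrow,\mathcal F^\downarrow$ be the up- and down-closures of $\mathcal F$; the key point is that for a complementary pair $\{A,[n]\setminus A\}$ with $A\notin\mathcal B$, at most one of $A,[n]\setminus A$ lies in $\mathcal F^\uparrow$ (if $A_0\subseteq A$ and $A_1\subseteq[n]\setminus A$ with $A_0,A_1\in\mathcal F$ then $A_0\cap A_1=\emptyset$, so $A_1=[n]\setminus A_0$ by Gronau's condition, forcing $A_0=A$ and hence $A\in\mathcal B$, a contradiction), and the same holds for $\mathcal F^\downarrow$. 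Therefore $|\mathcal F^\uparrow|,|\mathcal F^\downarrow|\le 2^{n-1}+\tfrac12|\mathcal B|$, and since $\mathcal F\subseteq\mathcal F^\uparrow\cap\mathcal F^\downarrow$, the Harris--Kleitman correlation inequality (as in the alternative proof of Theorem~\ref{th:11.1}) gives $|\mathcal F|\le 2^{n-2}+\tfrac12|\mathcal B|+O\!\bigl(|\mathcal B|^2/2^n\bigr)$. This alone is not enough --- it overshoots $2^{n-2}$ by roughly $\tfrac12|\mathcal B|$ --- so the real content of this regime is to recover that surplus from the fact that each of the $\tfrac12|\mathcal B|$ complementary pairs in $\mathcal B$, being incomparable to all of $\mathcal H$ by (iii), deletes several members from $\mathcal H$.

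For $\mathcal B$ large a different mechanism should dominate, and here I would run a stability argument: a self-complementary antichain in which every non-complementary pair is an IU-pair, if it is not essentially contained in a single middle layer, ought to be forced well below ${n\choose\lfloor n/2\rfloor}$, and below $2^{n-2}$ except for small $n$, by a Kruskal--Katona / Sperner-type stability estimate; while if $\mathcal B$ does concentrate on layer $n/2$ --- possible only for $n$ even --- then (essentially) every subset of $[n]$ is comparable to a member of $\mathcal B$, so (iii) forces $\mathcal H=\emptyset$ and $|\mathcal F|=|\mathcal B|\le{n\choose n/2}$, matching the second alternative of \eqref{eq:11.8}. The main obstacle is splicing these two regimes together with the \emph{sharp} constant $2^{n-2}$: one needs a rather complete description of the extremal self-complementary ``IU-except-complements'' antichains (including the awkward small odd $n$, where ${n\choose\lfloor n/2\rfloor}>2^{n-2}$ yet no middle-layer escape is available), and must rule out a moderately sized $\mathcal B$ coexisting with a nonempty $\mathcal H$ whose union would beat $2^{n-2}$. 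As the author already remarks for IU-families, $|\mathcal F\cap\mathcal A(n)|$ depends far too strongly on the chosen cyclic permutation for Katona's Circle to recover the constant $2^{n-2}$; so --- ironically for a paper about the Circle --- the decisive ingredient here is likely a correlation-plus-stability argument, with the circle useful only for the more tractable circular analogue of Conjecture~\ref{con:11.5}, which the same decomposition reduces to Theorems~\ref{th:3.2} and~\ref{th:11.3}.
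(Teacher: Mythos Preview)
There is no proof in the paper to compare against: Conjecture~\ref{con:11.5} is stated precisely as an \emph{open} problem (``an old unsolved problem related to IU-families''), and the paper does not attempt to settle it. What the paper does instead is prove the circular analogue, Theorem~\ref{th:11.6}, by the level-by-level count $|\mathcal D_k|\le\min\{k,n-k\}$ for $k\ne n/2$ together with a short ad hoc argument (Lemma~\ref{lem:11.7}) handling the middle layer when $n$ is even. Your closing sentence anticipates this correctly.

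Your proposal, for its part, is not a proof either, and you say so. The decomposition $\mathcal F=\mathcal B\sqcup\mathcal H$ and observations (i)--(iii) are correct, and the Harris--Kleitman step is valid and yields $|\mathcal F|\le 2^{n-2}+\tfrac12|\mathcal B|+|\mathcal B|^2/2^{n+2}$; but the two mechanisms you propose for absorbing the surplus $\tfrac12|\mathcal B|$ --- recovering it from the incomparability constraint (iii), and a stability analysis of self-complementary ``IU-except-complements'' antichains --- are left as programmes, and the splicing of the two regimes at the sharp constant $2^{n-2}$ is, as you note, the real difficulty. That difficulty is genuine: to my knowledge Gronau's conjecture is still open, so what you have written is a reasonable outline of where the problem lies rather than a proof to be graded against the paper.
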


Needless to say, the second case in \eqref{eq:11.8} corresponds to ${[n]\choose n/2}$.
As a matter of fact, $2^{n - 2} > {n\choose n/2}$ for $n \geq 10$.
Thus for $n \geq 9$ the conjecture is that $|\mathcal F| \leq 2^{n - 2}$.
That is, allowing complementary pairs does not alter the veracity of the bound \eqref{eq:11.1}.

To solve Gronau's problem for the Katona Circle is easier.

\setcounter{theorem}{5}
\begin{theorem}
\label{th:11.6}
Suppose that $\mathcal D \subset \mathcal A(n)$, $n \geq 3$ and for all $C, D \in \mathcal B$ either $C \cap D \neq \emptyset$ and $C \cup D \neq [n]$ or $C = [n]\setminus D$.
Then
\beq
\label{eq:11.9}
|\mathcal B| \leq \left\lfloor\frac{n}{2}\right\rfloor \times \left\lceil \frac{n}{2}\right\rceil.
\eeq
\end{theorem}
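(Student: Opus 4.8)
The plan is to argue layer by layer, as in the proof of Theorem~\ref{th:11.3}, treating the parities of $n$ separately. Write $\mathcal D_k=\mathcal D\cap\mathcal A(n,k)$, so that $|\mathcal D|=\sum_k|\mathcal D_k|$, and record that $\mathcal D^c=\{[n]\setminus D:D\in\mathcal D\}$ satisfies the same hypothesis, has the same size, and satisfies $|\mathcal D_k|=|\mathcal D^c\cap\mathcal A(n,n-k)|$. The first observation I would make is that if $C,D\in\mathcal D_k$ are disjoint then the hypothesis forces $C=[n]\setminus D$, hence $k=n-k$; thus whenever $2k\ne n$ the layer $\mathcal D_k$ is intersecting, and applying \eqref{eq:3.2} to $\mathcal D_k$ (if $k\le n/2$) or to $\mathcal D_k^c$ (if $k>n/2$) gives $|\mathcal D_k|\le\min\{k,n-k\}$. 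For $n$ odd this already finishes the proof, since $\sum_{k=1}^{n-1}\min\{k,n-k\}=2\sum_{k=1}^{(n-1)/2}k=\lfloor n/2\rfloor\lceil n/2\rceil$.

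So assume $n=2q$ with $q\ge2$. Only the middle layer $\mathcal D_q\subseteq\mathcal A(n,q)$ escapes the bound above, since it may contain antipodal (i.e.\ complementary) pairs and can be as large as $n$. Split $\mathcal D$ by size into $\mathcal D^-$ (sizes $<q$), $\mathcal D^0=\mathcal D_q$, and $\mathcal D^+$ (sizes $>q$); the layer bound gives $|\mathcal D^\pm|\le\sum_{k=1}^{q-1}k=\binom q2$. I would reduce everything to the single inequality
\[
2|\mathcal D^-|+|\mathcal D^0|\le q^2,
\]
because applying it also to $\mathcal D^c$ (noting $(\mathcal D^c)^-=(\mathcal D^+)^c$ and $(\mathcal D^c)^0=(\mathcal D^0)^c$) yields $2|\mathcal D^+|+|\mathcal D^0|\le q^2$, and averaging the two gives $|\mathcal D|=|\mathcal D^-|+|\mathcal D^0|+|\mathcal D^+|\le q^2=\lfloor n/2\rfloor\lceil n/2\rceil$.

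To prove the displayed inequality: if $\mathcal D^-=\emptyset$ it reads $|\mathcal D^0|\le q^2$, which is clear from $|\mathcal D^0|\le|\mathcal A(n,q)|=2q\le q^2$. Otherwise let $k_0$ be the smallest size occurring in $\mathcal D^-$ and fix a member of that size, say $A_{k_0}(x_i)$. For every $A_q(x_j)\in\mathcal D^0$ the two arcs cannot be complementary (their sizes differ) and their union has size $k_0+q<n$, so the hypothesis forces $A_{k_0}(x_i)\cap A_q(x_j)\ne\emptyset$. A short computation with arcs on the $n$-cycle shows $A_{k_0}(x_i)\cap A_q(x_j)=\emptyset$ exactly for the $q-k_0+1$ values $j\in\{i+k_0,i+k_0+1,\dots,i+q\}\pmod n$; hence the heads of the arcs in $\mathcal D^0$ all avoid this block, so $|\mathcal D^0|\le n-(q-k_0+1)=q+k_0-1$. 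Combining with $|\mathcal D^-|=\sum_{k=k_0}^{q-1}|\mathcal D_k|\le\sum_{k=k_0}^{q-1}k=\binom q2-\binom{k_0}2$ gives
\[
2|\mathcal D^-|+|\mathcal D^0|\le 2\binom q2-2\binom{k_0}2+q+k_0-1=q^2-(k_0-1)^2\le q^2 .
\]

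The layer bound from \eqref{eq:3.2} and the final arithmetic are routine; the step that needs care is the combinatorial heart of the even case, namely the realization that a single small arc in $\mathcal D$ already confines the heads of the whole middle layer $\mathcal D^0$ to one short sub-arc. The supporting calculation identifying which $q$-arcs are disjoint from a fixed short arc is elementary but must be carried out modulo $n$, and pinning down its exact length $q-k_0+1$ is precisely what makes the estimate collapse to $q^2-(k_0-1)^2$.
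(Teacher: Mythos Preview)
Your proof is correct and, for $n$ odd, proceeds exactly as the paper does. For $n=2q$ even, however, you take a genuinely different route. The paper parametrises by the \emph{excess} of the middle layer: writing $|\mathcal D_q|=q+t$, it shows (Lemma~\ref{lem:11.7}) that each of the $t$ complementary pairs in $\mathcal D_q$ forces any other member of $\mathcal D$ to contain a specific $2$-arc, whence $\mathcal D_k=\mathcal D_{n-k}=\emptyset$ for $1\le k\le t$; summing the remaining layer bounds then gives $|\mathcal D|\le q^2-t^2$. You instead parametrise by the \emph{smallest occupied layer} $k_0$ below the middle: a single arc of that size already confines the heads of $\mathcal D_q$ to a block of length $q+k_0-1$, and combining this with the layer bounds for $k_0\le k<q$ yields $2|\mathcal D^-|+|\mathcal D^0|\le q^2-(k_0-1)^2$; duality via $\mathcal D^c$ handles $\mathcal D^+$. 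In short, the paper argues from the middle outward (a large middle kills small layers), while you argue from the edge inward (a small layer caps the middle). Your argument is slightly more economical in that it needs only one small arc rather than the full collection of antipodal pairs, and it avoids the separate treatment of the extreme case $\mathcal D_q=\mathcal A(n,q)$; the paper's version, on the other hand, yields the marginally sharper intermediate bound $q^2-t^2$ directly in terms of the middle layer.
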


\begin{proof}
Let us recall the notation $\mathcal D_k = \mathcal B \cap \mathcal A(n, k)$.
Note that for $k \neq n/2$, $\mathcal D_k$ is an IU-family.
Thus \eqref{eq:11.3} and \eqref{eq:11.4} hold.
If $n$ is odd, summing up these inequalities yields \eqref{eq:11.9}.

From now on let $n = 2q$, even.
If $|\mathcal D_q| \leq q$, then we obtain \eqref{eq:11.9} again.
Let $|\mathcal D_q| = q + t$ for some $1 \leq t \leq q$.
Since $|\mathcal A(2q, q)| = 2q$, we infer that there are at least $t$ pairs $(B_i, C_i)$, $1 \leq i \leq t$ such that $B_i = [n] \setminus C_i$.
If $t = q$ then $\mathcal D_q = \mathcal A(n, q)$ and we easily see that no more arcs can be added.
I.e., $\mathcal D = \mathcal A(n, q)$ and 
$|\mathcal D| \leq \left\lfloor \frac{n}{2}\right\rfloor \times \left\lceil \frac{n}{2}\right\rceil$ (with strict inequality unless $n = 4$) follow. 
Thus we may assume $t < q$.

\setcounter{lemma}{6}
\begin{lemma}
\label{lem:11.7}
$\mathcal D_k = \emptyset = \mathcal D_{n - k}$ for $1 \leq k \leq t$.
\end{lemma}

\begin{proof}
Take an arbitrary $D \in \mathcal D$, $|D| = k < q$.
Then $D \cap B_i \neq \emptyset \neq D \cap C_i$ implies that $D$ contains one of the two $2$-arcs formed by the head of $B_i$ and the tail of $C_i$ or by the head of $C_i$ and the tail of $B_i$.
Let $A_i$ be the corresponding one.
For $i = 1, \ldots, t$ we obtain $t$ distinct $2$-arcs $A_i$ and
$|A_1 \cup \ldots \cup A_t| \geq t + 1$.
Since $A_1 \cup \ldots \cup A_t \subset D$ the statement follows.
If $k > q$ then we consider the family $\mathcal D^c = \{[n]\setminus D : D \in \mathcal D\}$ and repeat the above argument.
\end{proof}

Now to conclude the proof of \eqref{eq:11.9} is easy.
$$
|\mathcal D| = \sum_{1 \leq j < n} |\mathcal D_k| = q + t + \sum_{t < k < q} k + \sum_{q < k < 2q - t} (n - k) = q^2 + t - 2{t + 1\choose 2} < q^2. \qedhere
$$
\end{proof}

This approach worked for the Katona Circle because the total number of $k$-arcs is $n$, i.e., it is independent of~$k$.
However, for $2^{[n]}$ the middle binomial coefficient is much larger than most of the others.
Thus one needs some different ideas to prove \eqref{eq:11.8}.

\section{Concluding remarks}
\label{sec:12}

A couple of months ago I decided to write a survey paper to celebrate the eightieth birthday of Gyula Katona, my ex-teacher and one of my best friends.
His numerous contributions helped to make extremal finite set theory from a loose collection of results into a real theory.
Hoping to increase the number of readers I chose as a topic the Katona Circle because of its simplicity and elegance.

To start the work I asked Gyula to send me his work concerning the Circle Method.
Reading those papers I got a lot of inspiration and gradually proved and reproved several results that transformed the survey into a collection of new proofs and results.

This convinced me that may Gyula -- as I strongly wish -- live to be hundred and more, the Katona Circle will survive him!

\frenchspacing


\begin{thebibliography}{BBB}
\small
\itemsep=-2pt

\bibitem[B]{B}
B. Bollob\'as,  On generalized graphs, {\it Acta Mathematica Hungarica} {\bf 16} (1965), 447--452.

\bibitem[BKS]{BKS}  A. De Bonis, G. O. H. Katona,  K. Swanepoel, Largest family without $A \cup B \subset C \cap D$,
{\it J. Combin. Theory Ser. A} {\bf 111} (2005), 331--336.

\bibitem[Bo]{Bo} P. Borg, A short proof of a cross-intersection theorem of Hilton, {\it Discrete Mathematics} {\bf 309} (2009), 4750--4753.

\bibitem[D]{D} D. E. Daykin, A simple proof of the Kruskal--Katona theorem, {\it J. Comb. Theory A} {\bf 17} (1974), 252--253.

\bibitem[DL]{DL} D. E. Daykin, L. Lov\'asz, The number of values of a Boolean function,
{\it J. London Math. Soc. (2)} {\bf 12} (1975/76), no. 2, 225--230.

\bibitem[E]{E} K. Engel, {\it Sperner theory}, Encyclopedia of Mathematics and its Applications, 65. Cambridge University Press, Cambridge, 1997.

\bibitem[E1]{E1} P. Erd\H{o}s, On a lemma of Littlewood and Offord,
{\it Bulletin of the American Mathematical Society} {\bf 51} (1945), 898--902.

\bibitem[E2]{E2} P. Erd\H{o}s, A problem on independent $r$-tuples, {\it Ann. Univ. Sci. Budapest.} {\bf 8} (1965), 93--95.

\bibitem[EG]{EG} P. Erd\H{o}s, T. Gallai, On maximal paths and circuits of graphs, {\it Acta Math. Acad. Sci. Hungar.} {\bf 10} (1959), 337--356.

\bibitem[EKR]{EKR} P. Erd\H{o}s, C. Ko, R. Rado,
Intersection theorems for systems of finite sets,
{\it Quart. J. Math. Oxford (2)} {\bf 12} (1961), 313--320.

\bibitem[F76]{F76} P. Frankl, On Sperner families satisfying an additional condition, {\it J. Combin. Theory (A)} {\bf 20} (1976), 1--11.

\bibitem[F78]{F78} P. Frankl, The Erd\H{o}s--Ko--Rado theorem is true for $n = ckt$,
{\it Combinatorics} (Proc. Fifth Hungarian Colloq., Keszthely, 1976), Vol. I, 365--375, Colloq. Math. Soc. J\'anos Bolyai, 18, North-Holland, 1978.

\bibitem[F84]{F84} P. Frankl, A new short proof for the Kruskal--Katona theorem, {\it Discrete Math.} {\bf 48}(2-3) (1984), 327--329.

\bibitem[F87]{F87} P. Frankl, The shifting techniques in extremal set theory,
in: {\it Surveys in Combinatorics 1987} LMS Lecture Note Series 123, Cambridge Univ. Press, 1987, pp. 81--110.

\bibitem[F90]{F90} P. Frankl, Canonical antichains on the circle and applications,
{\it SIAM J. Discrete Math.} {\bf 3} (1990), 355--363.

\bibitem[F12]{F12} P. Frankl, On the Maximum Number of Edges in a Hypergraph with Given Matching Number, arXiv:1202.6847.

\bibitem[F13]{F13} P. Frankl, Improved bounds for Erd\H{o}s' Matching Conjecture,
{\it J. of Comb. Th. (A)} {\bf 120} (2013), 1068--1072.

\bibitem[F19]{F19} P. Frankl, A proof of the Hilton--Milner Theorem without computation,
{\it Moscow Journal of Combinatorics and Number Theory} {\bf 8} (2019), 97--101.

\bibitem[FF1]{FF1} P. Frankl and Z. F\"uredi, Nontrivial intersecting families,
{\it J. Combin. Theory Ser. A} {\bf 41}:1 (1986), 150--153.

\bibitem[FF2]{FF2} P. Frankl, Z. F\"uredi,
A new short proof of the EKR theorem, {\it J. Combin. Theory Ser. A} {\bf 119} (2012), 1388--1390.

\bibitem[FK1]{FK1} P. Frankl, A. Kupavskii,
New inequalities for families without k pairwise disjoint members,
{\it J. Comb. Th. Ser. A} {\bf 157} (2018), 427--434.

\bibitem[FK2]{FK2} P. Frankl, A. Kupavskii,
The Erd\H{o}s Matching Conjecture and Concentration Inequalities, to appear.

\bibitem[FK3]{FK3} P. Frankl, A. Kupavskii, Families of sets with no matching of sizes 3 and 4,
{\it European Journal of Combinatorics} {\bf 75} (2019), 123--135.

\bibitem[FT]{FT} P. Frankl, N. Tokushige, On $r$-cross intersecting families of sets, {\it Combinatorics, Probability \& Computing} {\bf 20} (2011), 749--752.

\bibitem[GP]{GP} D. Gerbner, B. Patk\'os, {\it Extremal Finite Set Theory}, Chapman and Hall/CRC, 2018.

\bibitem[GKK]{GKK} C. Greene, G. O. H. Katona, D. J. Kleitman,
Extensions of the Erd\H{o}s--Ko--Rado theorem, {\it Studies in Applied Mathematics} {\bf 55} (1976), 1--8.

\bibitem[Gr]{Gr} H. D. Gronau, On maximal families of subsets of a finite set,
{\it Discrete Math.} {\bf 34} (1981), 119--130.

\bibitem[Ha]{Ha} T. E. Harris, A lower bound for the critical probability in a certain percolation process,
{\it Proc. Camb. Phil. Soc.} {\bf 56} (1960), 13--20.

\bibitem[H]{H} A. J. W. Hilton, An intersection theorem for a collection of families of subsets of a finite set,
{\it J. London Math. Soc.} {\bf 15} (1977), 369--376.

\bibitem[HM]{HM} A. J. W. Hilton, E. C. Milner, Some intersection theorems for systems of finite sets,
{\it Quart. J. Math. Oxford (2)} {\bf 18} (1967), 369--384.

\bibitem[HK]{HK} G. Hurlbert, V. Kamat, New injective proofs of the
Erd\H{o}s--Ko--Rado and Hilton--Milner theorems, {\it Discrete Math.} {\bf 341}, no. 6 (2018), 1749--1754.

\bibitem[K1]{K1} G. O. H. Katona, Intersection theorems for systems of finite sets, {\it Acta Math. Acad. Sci. Hungar.} {\bf 15} (1964), 329--337.

\bibitem[K2]{K2} G. O. H. Katona, A theorem of finite sets, in: {\it Theory of Graphs, Proc. Colloq. Tihany, 1966}, Akad\'emiai Kiad\'o, 1968, pp.    187--207.

\bibitem[K3]{K3} G. O. H. Katona, A simple proof of the Erd\H{o}s--Chao Ko--Rado theorem,
{\it J. Combin. Theory Ser. B} {\bf 13} (1972), 183--184.

\bibitem[K4]{K4} G. O. H. Katona, Extremal problems for hypergraphs, in: {\it Combinatorics, part II} (eds. M. Hall and J. H. van Lint),
Math. Centre Tracts 56: 13--42, Mathematisch Centre Amsterdam, 1974.

\bibitem[K5]{K5} G. O. H. Katona, The cycle method and its limits, in: {\it Numbers, Information and Complexity}, Kluwer, 2000, pp. 129--141.

\bibitem[Kl1]{Kl1} D. J. Kleitman,
Families of non-disjoint subsets,
{\it J. of Combin. Theory} {\bf 1} (1966), 153--155.

\bibitem[Kl2]{Kl2} D. J. Kleitman,
Maximal number of subsets of a finite set no $k$ of which are pairwise disjoint,
{\it J. Combinatorial Theory} {\bf 5} (1968), 157--163.

\bibitem[Kr]{Kr} J. B. Kruskal, The number of simplices in a complex, in: {\it Math. Opt. Techniques}, Univ. of Calif. Press, 1963, pp. 251--278, MR 27 {\#}4771.

\bibitem[KZ]{KZ} A. Kupavskii, D. Zakharov, Regular bipartite graphs and intersecting families,
{\it J. Comb. Theory Ser. A} {\bf 155} (2018), 180--189.

\bibitem[L]{L} L. Lov\'asz, Problem 13.31, in: {\it Combinatorial problems and exercises}, North Holland, 1979.

\bibitem[Lu]{Lu} D. Lubell, A short proof of Sperner's lemma, {\it J. Combin. Theory} {\bf 1} (1966), 299.

\bibitem[Me]{Me} L. D. Meshalkin, Generalization of Sperner's theorem on the number
of subsets of a finite set, {\it Theory of Probability and its Applications} {\bf 8}
(1963), 203--204.

\bibitem[PA]{PA} J. Pach, P. K. Agarwal, {\it Combinatorial geometry}, John Wiley \& Sons, 2011.

\bibitem[P]{P} L. Pyber,  A new generalization of the Erd\H{o}s--Ko--Rado Theorem,
{\it J. Combinatorial Th. A} {\bf 43} (1986), 85--90.

\bibitem[Q]{Q} F. Quinn, {\it Extremal problems of intersecting and overlapping families}, Ph. D. Thesis,
Massachusetts Institute of Technology, 1987.

\bibitem[S]{S} E. Sperner, Ein Satz \"uber Untermengen einer endlichen Menge, {\it Math. Z.} {\bf 27} (1928), 544--548.

\bibitem[Se]{Se} P. Seymour, On incomparable families of sets, {\it Mathematica} {\bf 20} (1973), 208--209.

\bibitem[Y]{Y} K. Yamamoto,  Logarithmic order of free distributive lattice,
{\it J. Math. Soc. Japan} {\bf 6} (1954), 343--353.

\end{thebibliography}
\end{document}